\definecolor{green}{RGB}{0,127,0}
\definecolor{red}{RGB}{191,0,0}
\renewcommand{\tocsection}[3]{%
\indentlabel{\@ifnotempty{#2}{\bfseries\ignorespaces#1 #2.\quad}}\bfseries#3}
\renewcommand{\tocsubsection}[3]{%
\hspace{1cm} \indentlabel{\@ifnotempty{#2}{\ignorespaces#1 #2.\quad}}#3}
\theoremstyle{plain}
\newtheorem{lemma}{Lemma}[section]
\newtheorem{theorem}[lemma]{Theorem}
\newtheorem{corollary}[lemma]{Corollary}
\newtheorem{proposition}[lemma]{Proposition}
\newtheorem{defprop}[lemma]{Definition-Proposition}
\theoremstyle{definition}
\newtheorem{definition}[lemma]{Definition}
\newtheorem{definition-lemma}[lemma]{Definition-Lemma}
\theoremstyle{remark}
\newtheorem{remark}{Remark}
\newcommand{\xdownarrow}[1]{{\left\downarrow\vbox to #1{}\right.\kern-\nulldelimiterspace}}
\newcommand{\splus}{\!+\!}
\newcommand{\sminus}{\!-\!}
\newcommand{\spm}{\!\pm}
\newcommand{\RR}{\mathbb{R}}
\newcommand{\C}{\mathbb{C}}
\newcommand{\N}{\mathbb{N}}
\newcommand{\Sym}[1]{\mathfrak{S}_{#1}}
\newcommand{\Z}{\mathbb{Z}}
\newcommand{\QQ}{\mathbb{Q}}
\newcommand{\pp}{\mathbf{p}}
\newcommand{\qq}{\mathbf{q}}
\DeclareMathOperator{\Pf}{Pf}
\def\la{\lambda}
\DeclareMathOperator{\sgn}{sgn}
\DeclareMathOperator{\Gl}{GL}
\newcommand{\m}{\mathfrak{h}}
\DeclareMathOperator{\Symm}{Sym}
\DeclareMathOperator{\hook}{hook}
\DeclareMathOperator{\Res}{Res}
\title[$b$-monotone Hurwitz numbers]{$b$-monotone Hurwitz numbers:\\ Virasoro constraints, BKP hierarchy, and $O(N)$-BGW integral}
\author[V.~Bonzom]{Valentin Bonzom}
\address{Universit\'e Sorbonne Paris Nord, LIPN, CNRS, UMR 7030, F-93430 Villetaneuse, France}
\email{bonzom@lipn.univ-paris13.fr}
\author[G.~Chapuy]{Guillaume Chapuy}
\address{CNRS, IRIF UMR 8243, Universit\'e de Paris.}
\email{guillaume.chapuy@irif.fr}
\author[M.~Dołęga]{Maciej Dołęga}
\address{
Institute of Mathematics, 
Polish Academy of Sciences, 
ul. Śniadeckich 8, 
00-956 Warszawa, Poland.
}
\email{mdolega@impan.pl}
\thanks{This project has received funding from the European Research
  Council (ERC) under the European Union’s Horizon 2020 research and
  innovation programme (grant agreement No. ERC-2016-STG 716083
  “CombiTop”). MD is supported by {\it Narodowe Centrum Nauki},
  grant UMO-2017/26/D/ST1/00186. VB is partially supported by the ANR-20-CE48-0018 3DMaps.}
\begin{document}

\begin{abstract}
We study a $b$-deformation of monotone Hurwitz numbers, obtained by
deforming Schur functions into Jack symmetric functions. We give an
evolution equation for this model and derive from it Virasoro
constraints, thereby proving a conjecture of F\'eray on Jack
characters. A combinatorial model of non-oriented monotone Hurwitz
maps which generalizes monotone transposition factorizations is
provided.

In the case $b=1$ we obtain an explicit Schur expansion of the model and show that it obeys the BKP integrable hierarchy. This Schur expansion also proves a conjecture of Oliveira--Novaes relating zonal polynomials with irreducible representations of $O(N)$. We also relate the model to an $O(N)$ version of the Br\'ezin--Gross--Witten integral, which we solve explicitly in terms of Pfaffians in the case of even multiplicities.



      \end{abstract}

      \maketitle

\section{Introduction}

      In recent years, the enumerative properties of branched
      coverings of the sphere by orientable surfaces, that is, the
      study  of \emph{Hurwitz numbers}, has received a considerable
      attention~\cite{GouldenJackson1997a,EkedahlLandoShapiroVainshtein2001,GraberVakil2003,GouldenJacksonVakil2005,OkounkovPandharipande2006,
        KazarianLando2007, Guay-PaquetHarnad2017}. The subject widely intersects the study of \emph{map enumeration}, which has been active in combinatorics since the 1960s, and the study of \emph{matrix models}, whose topological expansions give rise to enumerative series of maps or Hurwitz numbers of various kinds (e.g.~\cite{LandoZvonkin2004,Eynard:book}).
A rich class of models in the area is given by the \emph{weighted Hurwitz numbers} of~\cite{Guay-PaquetHarnad2017}. They are parametrized by a certain weight function, hereafter denoted $G(z)$. Several classical models of enumerative geometry, such as Hurwitz numbers and dessins d'enfants, can be recovered for some choices of $G(z)$.

One of the most studied models among them are \emph{monotone Hurwitz numbers}, corresponding to the weight function $G(z)=\tfrac{1}{1-z}$. They have a nice combinatorial interpretation in terms of constrained transposition walks in the symmetric group~\cite{GouldenGuayPaquetNovak2014}. They have been proved in~\cite{Novak2020} to give an explicit combinatorial meaning to the $1/N$-expansion
      the Harish-Chandra/Itzykson--Zuber (HCIZ) and Brézin--Gross--Witten
      (BGW) integrals over $U(N)$. 
      Additionally, it was shown that they posses rich structural
      similarities with classical Hurwitz numbers: they are closely related to
      hypergeometric tau functions \cite{HarnadOrlov2015}, they satisfy the
       topological recursion of Eynard-Orantin \cite{DoDyerMatthews2017}, and the structural similarity between low-genus formulas is remarkable~\cite{GouldenGuayPaquetNovak:polynomiality}.

      \smallskip 

      In this paper, we will be interested in deformations of these models by a one-dimensional parameter called $b$, or $\beta=2/(1\!+\!b)$. These deformations have two different origins.

	  The first one lies in the theory of matrix models, where the
      \emph{$\beta$-ensembles}~\cite{Mehta2004} interpolate between
      the several types of standard ensembles. In the 1-matrix model, the $\beta$-ensemble interpolates between the Gaussian Unitary Ensemble at $\beta=2$, which consists in random Hermitian matrices, and the Gaussian Orthogonal Ensemble at $\beta=1$, which consists in random real, symmetric matrices. For these two
      special values of $\beta$, these models famously give rise to topological
      expansions which correspond respectively to the enumeration of
      maps on orientable and on non-oriented surfaces\footnote{We will
        use the word ``non-oriented'' to denote a general surface (a
        real, compact, 2-dimensional manifold without boundary) which may be orientable or not.}. For arbitrary values of $\beta$, the topological expansion over non-oriented surfaces is still valid but the latter are counted with a certain weight which is a monomial in the parameter\footnote{We will follow the habits of the combinatorial literature and favour the parameter $b$ over the equivalent parameter $\beta=2/(1\!+\!b)$, since it is the one which gives rise to nice positive expansions.} $b=(2-\beta)/\beta$, as shown by La~Croix~\cite{LaCroix2009}.
Although $\beta$-deformations of monotone Hurwitz numbers have not been considered prior to~\cite{ChapuyDolega2020} and this work, an arbitrary-$\beta$ 2-matrix model was studied in \cite{BergereEynardMarchalPrats-Ferrer2012}, based on a $\beta$-deformation of the HCIZ integral proposed in \cite{BergereEynard2009}. As for the BGW model, a $\beta$-deformation was proposed in \cite{MironovMorozovShakirov2010} in terms of an expansion on Jack symmetric functions, and it coincides with the function we study here (indeed the paper \cite{MironovMorozovShakirov2010} already derived the Virasoro constraints, via a technique of pure gauge limit; see Remark~\ref{rem:MMS11} below).

      The second origin lies in algebraic
      combinatorics. In~\cite{GouldenJackson1996}, Goulden and Jackson
      have introduced a $b$-deformation of the generating function of
      bipartite maps, obtained by considering the expansion of this
      function in  Schur functions, and replacing them by Jack
      symmetric functions of parameter $\alpha=1+b$ (see below). This
      model interpolates between the cases of orientable ($b=0$) and
      non-oriented ($b=1$) surfaces, and their \emph{$b$-conjecture},
      a fascinating open problem in algebraic combinatorics, claims
      that this function still has an explicit topological expansion
      for arbitrary values of the parameter $b$. The $\beta$-ensemble
      can be viewed as a special case of the Goulden-Jackson function,
      thus the previously mentioned topological expansion of La~Croix solves a special case of the $b$-conjecture, see~\cite{LaCroix2009}.
     
\smallskip

In the recent paper~\cite{ChapuyDolega2020}, the last two authors have extended this $b$-deformation to the whole family of tau functions of weighted Hurwitz numbers, i.e. for an arbitrary weight function $G(z)$. The corresponding coefficients are shown to enumerate generalized branched coverings of the sphere by non-oriented surfaces, or equivalently some maps on these surfaces called constellations, with an appropriate combinatorial $b$-weighting scheme. More precisely, the function of~\cite[Section 6]{ChapuyDolega2020} is given by
\begin{equation}\label{eq:deftaub}
	\tau^G_b\equiv \tau_b^G (t, u; \mathbf{p},\mathbf{q}; b) := \sum_{n\geq 0} t^n \sum_{\lambda \vdash n} \frac{J^{(b)}_\lambda(\mathbf{p}) J^{(b)}_\lambda(\mathbf{q})}{\|J^{(b)}_\lambda\|^2} \prod_{\Box \in \lambda } G(u \cdot c_{b}(\Box)),
\end{equation}
where the second sum is taken over partitions $\lambda$ of the integer
$n$, where $J^{(b)}_\lambda$ denotes the Jack symmetric function of parameter $(1+b)$ indexed by the partition $\lambda$, expressed as a function of its power-sum variables ${\bf p}=(p_1,p_2,\dots)$ or ${\bf q}=(q_1,q_2,\dots)$, and where the product is taken over all boxes $\Box$ of the partition $\lambda$, whose $b$-content is denoted by $c_{b}(\Box)$. All these notions will be carefully defined in Section~\ref{subsec:Jack}. Here the weight function $G(z)=1+g_1 z + \dots $ is a formal power series whose coefficients can be considered as infinitely many additional free variables of the model. 

The main result of~\cite{ChapuyDolega2020} is that the log-derivative
of this function has an expansion with positive integer coefficients,
which can be interpreted as counting generalized branched coverings of
the sphere by non-oriented surfaces with certain weights, whose Euler
characteristic is tracked by the power of $u$ (see
also~\cite{BenDali2021}). This statement is strongly related to the
$b$-conjecture of Goulden and Jackson. In particular it implies a
special case of the $b$-conjecture and it generalizes La~Croix's results, see~\cite[Section~6.4]{ChapuyDolega2020} for a discussion.
For $b=0$, the function $\tau^G_b$ in  \eqref{eq:deftaub} is the 2-Toda tau function of (orientable) weighted Hurwitz numbers~\cite{OrlovScherbin2000,Guay-PaquetHarnad2017}.

\medskip

This naturally calls for studying these $b$-deformed weighted Hurwitz numbers in more depth, especially for choices of weight functions  $G(z)$ which are of special interest due to their connections with other objects in mathematics.
Part of this program has actually been completed long before these notions
were introduced, in the case of the $\beta$-ensemble for the 1-matrix model 
already mentioned. It corresponds to a linear $G(z)$ with the
specialization $q_i=\delta_{i,1}$. This $\beta$-ensemble enjoys a
collection of nice properties: explicit formulation as matrix integrals at $b=0$ and $1$ \cite{Mehta2004}, explicit Virasoro constraints \cite{AdlervanMoerbeke2001}, explicit combinatorial expansion \cite{ChapuyDolega2020}, tau function of the KP hierarchy at $b=0$ \cite{Kharchev91}
and of the BKP
hierarchy\footnote{In the literature, two different hierarchies bear
  the name ``BKP''. In this paper we use the ``large/charged'' BKP hierarchy
  of Kac and Van de Leur~\cite{KacVandeLeur1998}, whose tau functions have expansions on Schur functions with
  Pfaffian coefficients and come from the orbit of the group $O(2\infty+1) \supset GL(\infty)$ on the Dirac vacuum;
  not the ``small/neutral'' one of the Kyoto
  school~\cite{DateJimboKashiwaraMiwa1981/2} which is related to expansions in
  $Q$-Schur functions and to the (smaller) group $O(\infty) \subset
  GL(\infty)$ (and used for example in~\cite{Nimmo1990,Orlov2003}). As it turns out, our functions are \emph{not} solution of the small BKP hierarchy (nor in fact of the KP hierarchy).} of Kac and Van de Leur at $b=1$ \cite{VandeLeur2001}. It is important to stress that these properties are not expected to be shared by weighted Hurwitz numbers in general. On the contrary they reflect the importance of this particular choice of weight function. See Appendix~\ref{sec:otherModels} for some comments on the $\beta$-ensemble for the 1-matrix model, and for the closely related model of dessins d'enfants.
\medskip
With those motivations in mind and looking for a model which replicates the nice properties of the $\beta$-ensemble, we will consider in this paper the
``monotone Hurwitz'' case of the function $\tau_b^G$, that is to say
with the choice of weight function $G(z)=\tfrac{1}{1-z}$ (or a
rescaling of it, which we denote $\tau^Z_b$, see Definition~\ref{def:mainfunction} or
Remark~\ref{rem:scalings}). In the usual language of enumerative geometry, we will only consider
\emph{single} (rather than double) Hurwitz numbers, meaning that the
variable $q_i$ in \eqref{eq:deftaub} will be taken to
$\mathbf{q}=\mathbf{1}:=(\delta_{i,1})$. For $b=0$, this coincides
with the aforementioned notion of single monotone Hurwitz numbers,
therefore our object of studies is a non-oriented, $b$-deformed, extension of them.

We will show that this model essentially enjoys the same
nice properties listed above for the $\beta$-ensemble, namely relation to matrix integrals, Virasoro constraints,
KP and BKP hierarchies, and combinatorial topological expansion. However,
as is already the case for $b=0$, the case of monotone Hurwitz numbers
is arguably more delicate (see e.g. the discussion in Section~\ref{sec:formalNSolutions}). In particular, the techniques developed
in this paper are not the same as those
of~\cite{MironovMorozovSemenoff1996, ZinnJustin2002, Alexandrov2018}
for the BGW model ($b=0$),~\cite{VandeLeur2001} for maps at
$b=1$,~\cite{AdlervanMoerbeke2001} for $\beta$-ensembles,
and~\cite{MironovMorozovShakirov2010} for the ``pure gauge'' limit of
Selberg integrals. Those references indeed all rely on matrix integrals while here we instead use algebraic combinatorics and in fact we are eventually able to derive applications to matrix integrals.

Several notions appearing in our analysis give rise to
results of independent interest. We make a connection to characters of the orthogonal group for which we find some seemingly new Pfaffian expression. We further prove two conjectures coming from different contexts \cite{Feray2012,OliveiraNovaes2021} dealing with zonal and Jack polynomials, and orthogonal characters. We also apply our results to
evaluate the $O(N)$ BGW integral in terms of Pfaffians.

\medskip
\noindent We now present the organization of the paper, highlighting the main results.
\begin{itemize}[itemsep=0pt, topsep=0pt,parsep=0pt, leftmargin=12pt]
	\item In Section~\ref{sec:virasoro} we prove an evolution
          equation, or quantum spectral curve, which is a
          $\frac{d}{dt}$ equation for our main function $\tau^Z_b$
          (Theorem~\ref{thm:evolution}). 
		We use it to prove a set of
          Virasoro constraints (Theorem~\ref{thm:virasoro}). The
          evolution equation is 
		a direct consequence of the Virasoro constraints,
		but in our case it is important
          to proceed in the other direction, i.e. to deduce
          the constraints from this equation thanks to a lemma of
          independent interest (Lemma~\ref{lemma:evolutionImpliesVirasoro}).
		  We then use the
          Virasoro constraints to prove a conjecture of
		F\'eray~\cite{Feray2012} related to Jack characters (Theorem~\ref{thm:Feray}).

	\item In Section~\ref{sec:model} we introduce a simple
          combinatorial model of embedded graphs, whose exponential generating
          function is precisely $\tau^Z_b$. It directly extends the
          classical combinatorial model of non-deformed monotone
          Hurwitz numbers given by non-decreasing factorizations of a
          permutation into
          transpositions~\cite{GouldenGuayPaquetNovak2013}.

	\item In Section~\ref{sec:schurExpansion} we study the
          function $\tau^Z_b$ for $b=1$. We show that this function,
          which is defined by its expansion in zonal polynomials in
          this case, has an explicit expansion in Schur functions with rescaled variables, involving dimensions of irreducible orthogonal representations (Theorem~\ref{thm:schurExpansion}). This fact is strongly related to a conjecture of Oliveira and Novaes~\cite{OliveiraNovaes2021}, which we prove as a byproduct (Theorem~\ref{thm:OliveiraNovaes}).

	\item In Section~\ref{sec:BKP} we show that the coefficients in this scaled Schur expansion can be expressed as Pfaffians. We deduce  that a rescaling of the function $\tau^Z_{b=1}$ is a tau function of the large BKP hierarchy (Theorem~\ref{thm:mainBKP}). This statement involves a discussion about truncations of expansions and about the role of the charge parameter $N=u^{-1}$ which \emph{has to be} interpreted as a formal parameter rather than an integer (as in classical matrix models). %

	\item In Section~\ref{sec:BGW} we show that an orthogonal version of
          the BGW
          integral~\cite{BrezinGross1980,GrossWitten1980} can be expressed by specializing $\tau^Z_{b=1}$. We use
          results of the previous sections to give an explicit solution to
          this integral in the special case where the eigenvalues of the external matrix have even degeneracy, 
		  in terms of Pfaffians of modified Bessel functions
          (Theorem~\ref{thm:pfaffianBGW}). This solves the
          problem of finding a closed, explicit formula for the orthogonal
          BGW/HCIZ integral in the case when external matrices are
          diagonal ~\cite{BergereEynard2009}.

\end{itemize}
Finally in Appendix~\ref{sec:otherModels} we collect some comments and analogies with the two other models of arbitrary-$\beta$ 1-matrix model and dessins d'enfants (equivalently non-oriented general maps and non-oriented bipartite maps). In particular we use the techniques we have developed in the previous sections to make explicit some results on maps and bipartite maps which can be considered as known, even if they are not all explicitly written in the literature.

\section{Evolution equation and Virasoro constraints}
\label{sec:virasoro}

\subsection{Definition of the main function}
\label{subsec:Jack}
We start by reviewing some properties of Jack symmetric functions. This material is standard, see~\cite{Stanley1989,Macdonald1995} for complements and for notions which we do not define here.
Everywhere in the paper, $b$ is a formal or complex variable, and $\pp = (p_i)_{i \geq 1}$ denotes an infinite family of  variables. We denote by $p_i^*$ the differential operator
\begin{align}\label{eq:pistar}
	p_i^*:=\frac{i\partial}{\partial p_i}.
\end{align}

      \begin{definition}
The Laplace--Beltrami\footnote{the name comes from the fact that for $b=1$ this operator is related to the classical Laplace--Beltrami operator as shown in~\cite{James1968}; it is common in the literature to use the same name for the general $b$-case;
  see~\cite{Stanley1989} and the comment there following eq.~(11)} operator $D_b$ is the differential operator defined by
\begin{equation}
\label{eq:Laplace-Beltrami}
	D_{b}: = \frac{1}{2}\left((1+b)\sum_{i,j\geq 1}p_{i+j}p_i^* p_{j}^* +\sum_{i,j\geq
  1}p_{i}p_j p_{i+j}^*+b\cdot\sum_{i\geq
  1}(i-1)p_{i}p_i^*\right).
\end{equation}
\end{definition}

Let $\Symm_b=\QQ(b)[p_1,p_2,\dots]$ denote the polynomial algebra over the field of fractions $\QQ(b)$. It is
isomorphic to the algebra of symmetric functions over $\QQ(b)$ by
identifying the variables $\pp$ with
the power-sum basis, see~\cite{Stanley:EC2}. In particular the Laplace--Beltrami operator $D_b$ given by
\eqref{eq:Laplace-Beltrami} acts on the symmetric function
algebra. Another (linear) basis of $\Symm_b$ is given by the \emph{monomial symmetric functions} which we denote by $m_\lambda$. 

We can identify an integer partition $\lambda$ of $n$ (denoted
$\lambda \vdash n$ or $|\lambda|=n$) with its \emph{Young diagram}, which is the union of \emph{boxes} $(x,y)$ with $x\leq \ell(\lambda)$ and $y\leq \lambda_x$, where $\ell(\lambda)$ is the  number of parts of $\lambda$.
If $\square = (x,y)$ is a box of $\lambda$ we write $\square\in\lambda$ and we denote by $c_b(\square)$ its \emph{$b$-content}%
\footnote{This quantity is usually called the $\alpha$-content, and denoted by $c_\alpha(\square)$, with $\alpha=1+b$. Because the parameter $b$ is more natural than $\alpha$ in our context, we prefer to use this convention. %
}:
      \[ c_b(\square) = b(x-1)+x-y.\]

      \begin{defprop}
  \label{defprop:Jack}
There is a unique family of symmetric functions
$\{J^{(1+b)} _\lambda(\pp)\}$ such that for each partition $\lambda$,
\begin{itemize}
\item $D_b J^{(1+b)}_\lambda = \left(\sum_{\square \in \la}c_b(\square)\right)J^{(1+b)}_\lambda$;
\item
	$ J_\la^{(1+b)} = \hook_b(\la) m_\la + \sum_{\nu <
          \la}a^{\la}_\nu m_\nu,  \text{ where } a^{\la}_\nu \in
        \QQ(b)$, $<$ is the dominance order on integer partitions, and where
	\begin{align}
\label{eq:HookProduct}
\hook_b(\la) &:= \prod_{\square \in \la}\Big((1+b)\ a(\square) + \ell(\square) +1 \Big).
	\end{align}
  \end{itemize}
  We call them \emph{Jack symmetric functions} of parameter $(1+b)$.
\end{defprop}
We can endow $\Symm_b$ with a scalar product by defining it on
the basis of power-sum symmetric functions
\begin{equation}
  \label{eq:product}
  \langle p_\mu,p_\nu\rangle_b := (1+b)^{\ell(\mu)}z_\mu\delta_{\mu,\nu},
\end{equation}
where $\delta_{\mu,\nu}$ is the Kronecker delta, $z_\mu:= \prod_{i
  \geq 1}i^{m_i(\mu)}m_i(\mu)!$, and $m_i(\mu)$ is the number of parts
of $\mu$ equal to $i$. A fundamental fact is that Jack
symmetric functions are orthogonal, with the following squared norm:
\[\langle J_\la^{(1+b)},J_\la^{(1+b)}\rangle_b = \hook_b(\la)
  \hook'_b(\la) =: j_\la^{(b)},\]
where
\begin{align}
\label{eq:HookProduct2}
\hook'_b(\la) &:= \prod_{\square \in \la}\Big((1+b)\ a(\square) +
\ell(\square) +(1+b) \Big).
\end{align}
To prevent confusion, we emphasize that the operator $p_i^*$ in~\eqref{eq:pistar} is \emph{not} the dual of the multiplication by $p_i$ for $\langle \cdot, \cdot \rangle_b$ (it is for $\langle \cdot, \cdot \rangle_0$, although we will not directly use this fact).

In this paper we will use the notation $[\cdot], (\cdot), [[\cdot]]$ to denote respectively polynomials, rational functions, and formal power series; for example $\mathbb{Q}(b)[\pp][[u,t]]$ is the ring of formal power series in $t$ and $u$ whose coefficients are polynomials in the variables $p_i$ with coefficients themselves rational fractions in $b$ over $\mathbb{Q}$.
We now define our main function, with a slight abuse of notation with respect to~\eqref{eq:deftaub} since we will not need the variables $\qq$ anymore.
\begin{definition}[Main function]\label{def:mainfunction}
	We let $\tau_b^{Z}(t;\pp,u)$ be the generating function of
        $b$-weighted single Hurwitz numbers of~\cite{ChapuyDolega2020}
        given by~\eqref{eq:deftaub}, with weight function $G(z)=(1+z)^{-1}$, and with the rescaling  $t
        \mapsto t\cdot u$. Explicitly, it is defined by
	\begin{align}\label{eq:defmain}
                 \tau_b^{Z}(t;\pp,u) &= \sum_{n \geq 0} t^n\sum_{\lambda \vdash n} 
		\frac{J_\lambda^{(b)}(\pp)}{j_\lambda^{(b)}}\prod_{\square
	\in \lambda}\frac{1}{u^{-1}+c_b(\square)},
      \end{align}
      This series is understood as an element of $\mathbb{Q}(b,u)[\pp][[t]]$, and it also belongs to $\mathbb{Q}(b)[\pp][[u,t]]$. 
\end{definition}

\subsection{Evolution equation}
\label{subsec:Evolution}

Theorem 6.1 in~\cite{ChapuyDolega2020}, specialized to our case, shows that the function $\tau_b^{Z}(t;\pp,u)$ obeys a certain linear differential equation involving unbounded iterates of certain explicit differential operators -- corresponding to the fact that the expansion of $G(z)=(1+z)^{-1}$ as an element of $\mathbb{Q}[ [ z] ]$ is infinite. Here we take advantage of the particularly simple form of this function to obtain a much simpler and fully explicit equation.

\begin{theorem}[Evolution equation]\label{thm:evolution}
       The function $\tau_b^{Z}$ is
       uniquely determined by the following equation:
       \begin{align}\label{eq:evolution}
	\frac{td}{dt} \tau_b^{Z}(t;\pp,u) &= E_b \tau_b^{Z}(t;\pp,u),
\end{align}
where
\begin{align}
E_b &= u\left(\frac{tp_1}{1+b} - \left((1+b)\sum_{m,n}p_{m+n}p_m^*p_n^*+ \sum_{n,m \geq 1}
              p_n p_m p^*_{n+m}+b\sum_{n\geq 1}(n-1)\cdot p_{n}p_n^*\right)\right).
\end{align}
      \end{theorem}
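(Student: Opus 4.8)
\emph{Proof plan.} The plan is to turn the equation into an explicit $t$-recursion and then reduce that recursion to two classical identities about Jack functions. First I would write $\tau_b^Z=\sum_{n\ge 0}t^n a_n$ with $a_n=\sum_{\lambda\vdash n}\tfrac{J^{(b)}_\lambda(\pp)}{j^{(b)}_\lambda}\,f_\lambda$ and $f_\lambda:=\prod_{\square\in\lambda}(u^{-1}+c_b(\square))^{-1}$, so that $a_n$ is homogeneous of degree $n$ in the $p_i$ (grading $\deg p_i=i$), and observe that the operator inside $E_b$ after $\tfrac{tp_1}{1+b}$ is \emph{syntactically} $2D_b$, i.e.\ $E_b=u\big(\tfrac{tp_1}{1+b}-2D_b\big)$. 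Extracting the coefficient of $t^n$ in \eqref{eq:evolution} and using $D_bJ^{(b)}_\lambda=\big(\sum_{\square\in\lambda}c_b(\square)\big)J^{(b)}_\lambda$, the equation becomes, for $n\ge 1$,
\begin{equation}\label{eq:planrec}
(n+2uD_b)\,a_n=\frac{u}{1+b}\,p_1\,a_{n-1},
\end{equation}
together with $D_b a_0=0$. Since $n+2uD_b$ acts on the degree-$n$ part diagonally in the Jack basis with eigenvalues $n+2u\,c_\lambda$ ($c_\lambda:=\sum_{\square\in\lambda}c_b(\square)$), which are nonzero in $\QQ(b,u)$ (they equal $n$ at $u=0$), it is invertible there; hence \eqref{eq:planrec} together with the normalization $a_0=1$ (automatic from the definition) pins down all $a_n$, giving the \emph{uniqueness} half. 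It then remains to check that $\tau_b^Z$ \emph{satisfies} \eqref{eq:planrec}.

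For that, I would expand the right side of \eqref{eq:planrec} by the Jack Pieri rule $p_1\tfrac{J^{(b)}_\mu}{j^{(b)}_\mu}=\sum_{\lambda=\mu+\square}\gamma_{\mu,\lambda}\,\tfrac{J^{(b)}_\lambda}{j^{(b)}_\lambda}$ (single-box additions; the $\gamma_{\mu,\lambda}\in\QQ(b)$ carry no $u$), collect by $\lambda$, use the telescoping $f_\mu=(u^{-1}+c_b(\lambda/\mu))\,f_\lambda$ and the additivity $c_b(\lambda/\mu)=c_\lambda-c_\mu$, and match the coefficient of $\tfrac{J^{(b)}_\lambda}{j^{(b)}_\lambda}$. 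Separating the (at most linear) $u$-dependence, \eqref{eq:planrec} becomes equivalent to the two identities, valid for every $\lambda\vdash n$:
\begin{equation}\label{eq:planI}
\sum_{\mu=\lambda-\square}\gamma_{\mu,\lambda}=(1+b)\,n,\qquad\qquad \sum_{\mu=\lambda-\square}c_\mu\,\gamma_{\mu,\lambda}=(1+b)(n-2)\,c_\lambda,
\end{equation}
the second being the form the $u^1$-part takes once the first is used.

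To prove \eqref{eq:planI} I would use that, by adjointness of $p_1\cdot(-)$ and $(1+b)\tfrac{\partial}{\partial p_1}$ for $\langle\cdot,\cdot\rangle_b$ (consistent with the remark following \eqref{eq:HookProduct2}), $\gamma_{\mu,\lambda}$ equals the coefficient of $J^{(b)}_\mu$ in $(1+b)\tfrac{\partial}{\partial p_1}J^{(b)}_\lambda$; and that since $[p_1^m]J^{(b)}_\nu=1$ for all $\nu\vdash m$ (the integral-form normalization), the functional ``sum of $J^{(b)}_\nu$-coefficients'' on degree-$m$ symmetric functions is just $[p_1^m]$. Applying it, the first identity in \eqref{eq:planI} reads $(1+b)\,[p_1^{n-1}]\tfrac{\partial}{\partial p_1}J^{(b)}_\lambda=(1+b)n$ (only the $p_1^n$-monomial of $J^{(b)}_\lambda$ contributes). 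For the second, the left side becomes $(1+b)\,[p_1^{n-1}]\big(D_b\tfrac{\partial}{\partial p_1}J^{(b)}_\lambda\big)$; using $D_b\tfrac{\partial}{\partial p_1}=\tfrac{\partial}{\partial p_1}D_b+[D_b,\tfrac{\partial}{\partial p_1}]$ with $[D_b,\tfrac{\partial}{\partial p_1}]=-\sum_{i\ge1}p_i\,p_{i+1}^{*}$ (read off \eqref{eq:Laplace-Beltrami}), this equals $(1+b)\big(n\,c_\lambda-[p_1^{n-1}]\sum_i p_i p_{i+1}^{*}J^{(b)}_\lambda\big)$, and $[p_1^{n-1}]\sum_i p_i p_{i+1}^{*}J^{(b)}_\lambda=2\,[p_{(2,1^{n-2})}]J^{(b)}_\lambda=2c_\lambda$. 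The last equality $[p_{(2,1^{n-2})}]J^{(b)}_\lambda=c_\lambda$ is the Jack version of ``sum of contents $=$ normalized transposition character'', obtained for free by extracting $[p_1^n]$ from $D_bJ^{(b)}_\lambda=c_\lambda J^{(b)}_\lambda$ (among $p_\mu$, $\mu\vdash n$, only $\mu=(2,1^{n-2})$ feeds a $p_1^n$ into $D_bp_\mu$, with coefficient $1$).

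The main obstacle is the second identity in \eqref{eq:planI}: everything else is bookkeeping, but this is where the classical sum-of-contents phenomenon for Jack characters enters, and it is really the reason $E_b$ is first order and detects exactly the data $c_\lambda$. The fiddly points along the way are: checking that the bracket in $E_b$ is precisely $2D_b$ (otherwise one stays stuck with the infinite-order operator of \cite[Thm.~6.1]{ChapuyDolega2020}), the commutator $[D_b,\tfrac{\partial}{\partial p_1}]=-\sum_i p_i p_{i+1}^{*}$, and tracking the powers of $u$ through the telescoping of the content products. (An alternative would be to start from \cite[Thm.~6.1]{ChapuyDolega2020} specialized to $G(z)=(1+z)^{-1}$ and resum it using $1/G(z)=1+z$; the route above avoids that.)
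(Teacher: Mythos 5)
Your proposal is correct and follows essentially the same route as the paper's proof: both recognize the bracket in $E_b$ as $2D_b$, expand via the Pieri rule, split the resulting identity by powers of $u$, and reduce everything to the two Jack-character facts $[p_{1^n}]J^{(1+b)}_\lambda=1$ and $[p_{(2,1^{n-2})}]J^{(1+b)}_\lambda=\sum_{\square\in\lambda}c_b(\square)$ (i.e.\ \eqref{eq:1^n} and \eqref{eq:21^n}) together with a commutator of $D_b$ with $p_1$ (the paper) or with its adjoint $\partial/\partial p_1$ (your version). The only difference is organizational — you match coefficients of $J^{(1+b)}_\lambda$ for fixed $\lambda$ where the paper sums over $\lambda$ for fixed $\mu$ using \eqref{eq:CauchySimple} — and your explicit uniqueness argument via invertibility of $n+2uD_b$ on the degree-$n$ part is the intended one.
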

      The special case $b=0$ of the above equation is known as the
      cut-and-join equation for the monotone Hurwitz numbers first proved in \cite{GouldenGuayPaquetNovak2013}
      and reproved in \cite{Dunin-BarkowskiKramerPopolitovShadrin2019} using different methods. Our proof reduced to this case gives
      another proof. It is
      based on the theory of Jack polynomials, and 
      has a similar
      flavour as the proof of the decomposition equation
      in~\cite{ChapuyDolega2020}, but it is not the same. The next
      proposition collects properties of Jack polynomials that we will
      need and can be found (explicitly or implicitly) in the seminal
      work of Stanley \cite{Stanley1989}.

\begin{proposition}[\cite{Stanley1989}]
  For any partition $\lambda$ one has
  \begin{align}
    \label{eq:Cauchy}
	  \sum_{n \geq 1} \sum_{\lambda \vdash n} \frac{J_\la^{(1+b)}(\pp)J_\la^{(1+b)}(\qq)}{j_\lambda^{(b)}} &= \sum_{n \geq 1}
                                                      \sum_{\lambda\vdash n}
                                                      \frac{p_\lambda q_\lambda}{z_\la(1+b)^{\ell(\la)}}\\
    \label{eq:JackPieriRule}
    p_1 J_\la^{(1+b)}(\pp) &= \sum_{\la \nearrow \mu}c_{\la \nearrow \mu}
                            J_\mu^{(1+b)}(\pp),
  \end{align}
where $c_{\la \nearrow \mu} \in \Z[b]$ is a (explicit) polynomial in
$b$ with integer coefficients and $\la \nearrow \mu$ means that
$\mu$ is obtained from $\lambda$ by adding a single box.
Moreover, for any partition $\lambda \vdash n$ one has
  \begin{align}
    \label{eq:1^n}
     \langle J_\lambda^{(1+b)}, p_{1^n}\rangle_b &=
     (1+b)^nn!,\\
    \label{eq:21^n}
         \langle J_\lambda^{(1+b)}, p_{21^{n-2}}\rangle_b &=
	 \left(\sum_{\square \in
                                                  \la}c_b(\square)\right)
                                                  (1+b)^{n-1}\cdot 2(n-2)!,\\
        \label{eq:21^nLB}
         D_b J_\lambda^{(1+b)} &=\frac{\langle J_\lambda^{(1+b)},
                                 p_{21^{n-2}}\rangle_b}{(1+b)^{n-1}\cdot
                                 2(n-2)!} J_\lambda^{(1+b)},\\
    \label{eq:CauchySimple}
    \sum_{n \geq 1} \sum_{\lambda \vdash n} \frac{J_\la^{(1+b)}(\pp)}{j_\lambda^{(b)}} &= \sum_{n \geq 1}
                                                      \frac{p_{1^n}}{(1+b)^{n}n!}.
    \end{align}
\end{proposition}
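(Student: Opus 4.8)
The statement collects six identities that are either standard properties of Jack polynomials or formal consequences of a small set of foundational facts. The plan is to isolate the genuinely nontrivial inputs, all due to Stanley~\cite{Stanley1989} (see also~\cite{Macdonald1995}), and to derive the remaining identities from them by elementary manipulations. The inputs I will use are: (i) the orthogonality of the Jack polynomials together with the value $j_\la^{(b)}$ of their squared norm, recorded just before the statement; (ii) the fact that the integral normalization forces the coefficient of $p_{1^n}$ in $J_\la^{(1+b)}$ to equal $1$, which is exactly \eqref{eq:1^n} after inserting the value $\langle p_{1^n},p_{1^n}\rangle_b=(1+b)^n n!$ from \eqref{eq:product}; (iii) the Pieri rule \eqref{eq:JackPieriRule} with its integrality statement; and (iv) the defining eigenvalue equation $D_b J_\la^{(1+b)}=\big(\sum_{\square\in\la}c_b(\square)\big)J_\la^{(1+b)}$ from Definition-Proposition~\ref{defprop:Jack}.

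\textbf{The two Cauchy identities.} First I would prove \eqref{eq:Cauchy} by the standard reproducing-kernel argument. The relation $\langle p_\mu,p_\nu\rangle_b=\delta_{\mu\nu}z_\mu(1+b)^{\ell(\mu)}$ shows that $\{p_\mu\}$ and $\{p_\mu/(z_\mu(1+b)^{\ell(\mu)})\}$ are dual bases of $\Symm_b$; hence the kernel $\sum_\mu p_\mu(\pp)p_\mu(\qq)/(z_\mu(1+b)^{\ell(\mu)})$ reproduces every symmetric function under $\langle\cdot,\cdot\rangle_b$ and, being the unique such element, equals $\sum_\la u_\la(\pp)v_\la(\qq)$ for any pair of dual bases $\{u_\la\},\{v_\la\}$. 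Applying this to $u_\la=J_\la^{(1+b)}$ and $v_\la=J_\la^{(1+b)}/j_\la^{(b)}$, which are dual by (i), yields \eqref{eq:Cauchy}. Then \eqref{eq:CauchySimple} follows by specialising $\qq=\mathbf 1=(\delta_{i,1})$: on the right only $\lambda=1^n$ survives, leaving $\sum_n p_{1^n}/((1+b)^n n!)$, while on the left each factor $J_\la^{(1+b)}(\qq)$ collapses to the coefficient of $p_{1^n}$ in $J_\la^{(1+b)}$, which is $1$ by \eqref{eq:1^n}.

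\textbf{The three scalar-product identities.} Identity \eqref{eq:1^n} is the statement $[p_{1^n}]J_\la^{(1+b)}=1$, the defining feature of the integral normalization; I would cite Stanley for the equivalence between the hook normalization used in Definition-Proposition~\ref{defprop:Jack} and this one, then read off the scalar product from \eqref{eq:product}. For \eqref{eq:21^n} I would extract the coefficient of $p_{1^n}$ from both sides of the eigenvalue equation (iv). Writing $J_\la^{(1+b)}=\sum_\mu\theta_\mu p_\mu$, one checks that the only contribution to $[p_{1^n}]D_bJ_\la^{(1+b)}$ comes from the middle operator $\sum_{i,j\geq1}p_ip_jp_{i+j}^*$ with $(i,j)=(1,1)$ acting on the $p_{21^{n-2}}$ term, since every other monomial produced by $D_b$ retains a part of size $\geq 2$; a short computation (the factor $\tfrac12$ cancelling the $2$ coming from $p_2^\ast$) gives $[p_{1^n}]D_bJ_\la^{(1+b)}=\theta_{21^{n-2}}$. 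Comparing with $[p_{1^n}]\big(\sum_{\square}c_b(\square)\big)J_\la^{(1+b)}=\big(\sum_{\square}c_b(\square)\big)\theta_{1^n}$ and using $\theta_{1^n}=1$ yields $\theta_{21^{n-2}}=\sum_{\square}c_b(\square)$, which is \eqref{eq:21^n} once the coefficient is turned back into a scalar product via \eqref{eq:product}. Finally \eqref{eq:21^nLB} is just (iv) rewritten: its eigenvalue $\sum_{\square}c_b(\square)$ is identified with the displayed ratio precisely by \eqref{eq:21^n}.

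\textbf{The Pieri rule and the main obstacle.} The Pieri rule \eqref{eq:JackPieriRule} I would quote from Stanley's explicit expansion of $p_1 P_\la^{(\alpha)}$, converting to the integral normalization $J_\la^{(1+b)}=\hook_b(\la)P_\la^{(1+b)}$ and checking that the resulting coefficients $c_{\la\nearrow\mu}$ lie in $\Z[b]$. The main obstacle lies exactly in inputs (ii) and (iii): that the integral form satisfies $[p_{1^n}]J_\la=1$ and that its Pieri coefficients are integral polynomials in $b$ are the deep normalization theorems of~\cite{Stanley1989} and are not elementary. Everything else in the statement is a formal consequence of these, together with orthogonality and the Laplace--Beltrami eigenvalue equation, as sketched above.
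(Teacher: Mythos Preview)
Your proposal is correct. The paper itself does not give a proof of this proposition; it simply states that the listed properties ``can be found (explicitly or implicitly) in the seminal work of Stanley~\cite{Stanley1989}'' and uses them as black boxes. Your sketch goes further than the paper by explaining which identities are genuine inputs from Stanley (the integral normalization $[p_{1^n}]J_\la^{(1+b)}=1$, the Pieri rule with integrality of the $c_{\la\nearrow\mu}$, and the orthogonality/norm formula) and which are elementary consequences (the Cauchy identities via the reproducing-kernel argument, \eqref{eq:21^n} by extracting $[p_{1^n}]$ from the Laplace--Beltrami eigenvalue equation, and \eqref{eq:21^nLB} as a rewriting). This decomposition is accurate and the individual derivations are sound; in particular your analysis of which monomials $p_\mu$ can be sent to $p_{1^n}$ by $D_b$ is correct.
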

We can now prove the evolution equation.
\begin{proof}[Proof of Theorem~\ref{thm:evolution}]
	Locally to this proof, we write $J_\lambda=J_\lambda^{(1+b)},
        j_\lambda=j_\lambda^{(b)}$ to make the notation lighter. We define also
	\[ \tilde{J}_\lambda(\pp) := \frac{J_\la(\pp)}{j_\la}\prod_{\square
            \in \lambda}\big(u^{-1}+c_b(\square)\big)^{-1}.\]
 Notice that
        \[ \frac{up_1}{1+b}\tilde{J}_\lambda(\pp) = \sum_{\la \nearrow \mu}c_{\la \nearrow \mu}\big(1+uc_b(\mu\setminus\la)\big)
          \frac{j_\mu}{(1+b)j_\la} \tilde{J}_\mu(\pp)\]
        by \cref{eq:JackPieriRule}. Therefore
        \begin{align}
          \label{eq:First}
          [t^{n+1}]\frac{u\cdot t\cdot p_1}{1+b}\tau_b^{Z}(t;\pp,u) =
          \sum_{\lambda \vdash n}\sum_{\mu \vdash n+1}c_{\la \nearrow \mu} \big(1+uc_b(\mu\setminus\la)\big)
          \frac{j_\mu}{(1+b)j_\la} \tilde{J}_\mu(\pp) \\
          = \sum_{\lambda \vdash n}\sum_{\mu \vdash n+1}c_{\la \nearrow \mu}\frac{j_\mu}{(1+b)j_\la} \tilde{J}_\mu(\pp)+u\sum_{\lambda \vdash n}\sum_{\mu \vdash n+1}c_{\la \nearrow \mu}c_b(\mu\setminus\la)\frac{j_\mu}{(1+b)j_\la} \tilde{J}_\mu(\pp). \nonumber 
        \end{align}
        We can rewrite the first sum as follows:
        \begin{align}
          \label{eq:RHS1}
\sum_{\lambda \vdash n}\sum_{\mu \vdash n+1}c_{\la \nearrow
                  \mu}\frac{j_\mu}{(1+b)j_\la} \tilde{J}_\mu(\pp) &=
                  \sum_{\lambda \vdash n}\sum_{\mu \vdash
                  n+1}\left\langle p_1
                  \frac{J_\la}{j_\la},\frac{J_\mu}{1+b}\right\rangle_b
                  \tilde{J}_\mu(\pp) 
                                                                    = \\
                  \sum_{\mu \vdash
                  n+1}\left\langle 
                  \frac{p_1^{n+1}}{(1+b)^nn!},\frac{J_\mu}{1+b}\right\rangle_b
                  \tilde{J}_\mu(\pp) 
                  &= (n+1) \sum_{\mu \vdash
                  n+1}\tilde{J}_\mu(\pp) = [t^{n+1}]\frac{td}{dt}\tau_b^{Z}(t;\pp,u). \nonumber 
        \end{align}
We used \cref{eq:JackPieriRule} in the first equality, then we changed
the order of summation and applied
\eqref{eq:CauchySimple} in the second equality. The last equalities
follow from \eqref{eq:1^n} and from the fact that the exponent in $t$
counts the degree of the associated symmetric functions. Now, consider the second summand. We have
\begin{align}
  \label{eq:RHS2}
u\sum_{\lambda \vdash n}\sum_{\mu \vdash n+1}c_{\la \nearrow \mu}c_b(\mu\setminus\la)\frac{j_\mu}{(1+b)j_\la} \tilde{J}_\mu(\pp) &=
                  u\sum_{\lambda \vdash n}\sum_{\mu \vdash
                  n+1}\left\langle [D_{b},p_1]
                  \frac{J_\la}{j_\la},\frac{J_\mu}{1+b}\right\rangle_b
                  \tilde{J}_\mu(\pp) 
                                                                    = \nonumber \\
                  u\sum_{\mu \vdash
                  n+1}\left\langle 
                  \frac{[D_{b},p_1] p_1^{n}}{(1+b)^nn!},\frac{J_\mu}{1+b}\right\rangle_b
                  \tilde{J}_\mu(\pp) 
                  &= u\sum_{\mu \vdash
                  n+1}\left\langle 
			\big({\textstyle \binom{n+1}{2}-\binom{n}{2}}\big) \frac{p_2p_1^{n-1}}{(1+b)^nn!},J_\mu\right\rangle_b
                    \tilde{J}_\mu(\pp)  \nonumber \\
                  = u\sum_{\mu \vdash
                  n+1}\left\langle \frac{p_2p_1^{n-1}}{(1+b)^n(n-1)!},J_\mu\right\rangle_b
                  \tilde{J}_\mu(\pp) &= [t^{n+1}] 2u D_{b} \tau_b^{Z}(t;\pp,u).
                \end{align}
                Here, the first equality follows from
                the characterization of Jack symmetric functions as
                the eigenfunctions of the Laplace--Beltrami operator and
                from \eqref{eq:JackPieriRule}. The second equality is
                obtained by changing the order of the summation and
                applying \eqref{eq:CauchySimple}. The last equalities
                are obtained by a direct computation of the action of
                the Laplace--Beltrami operator on the power-sum
                symmetric functions and by applying \eqref{eq:21^n}
                and \eqref{eq:21^nLB}.

                Collecting \eqref{eq:RHS1} and \eqref{eq:RHS2} and
                comparing them with \eqref{eq:First} we obtain
                \[ \frac{u\cdot t\cdot p_1}{1+b}\tau_b^{Z}(t;\pp,u)
                  =\left(\frac{td}{dt}+2u D_{b}\right)\tau_b^{Z}(t;\pp,u).\]
It gives directly \eqref{eq:evolution} by using the formula~\eqref{eq:Laplace-Beltrami} for $D_b$.
        \end{proof}

\subsection{Virasoro constraints}

We now prove the Virasoro constraints, which can be viewed as a refinement of the evolution equation~\eqref{eq:evolution} (the evolution equation is the sum of the Virasoro constraints weighted by $p_i$).
In classical combinatorial models (such as maps and bipartite maps)
and for $b\in\{0,1\}$, the evolution equation has a simple
combinatorial interpretation as a root-deletion procedure, and this
combinatorial proof of the evolution equation in fact directly proves
the Virasoro constraints. For arbitrary $b$, things are more complicated (see also~\cite[Rem.~5]{ChapuyDolega2020}) and we must proceed differently. The next lemma, which can be used in other models (see Appendix~\ref{sec:otherModels}) says that under mild assumptions one can in fact go backwards and deduce the Virasoro  constraints from their weighted sum.

\begin{lemma}[Virasoro constraints from their sum]\label{lemma:evolutionImpliesVirasoro}
        Suppose that $R$ is a ring and let $F \in R[\pp][[t,t_1,\dots,t_r]]$
        be a formal power series
	such that for each $n\geq 0$,
		$[t^n]F$ is a homogenous
        polynomial in $\pp$ of degree $n$, where $\deg(p_i) := i$. 
	Suppose
        that there exists an operator $A$ such that
        \begin{itemize}
        \item $(\frac{td}{dt}+s^kA)F = 0$ for a positive integer $k$,
          with $s \in \{t,t_1,\dots,t_r\}$ and $A$ independent of $s$;
          \item $p_i^* F\big|_{s=0} = 0$ for each positive integer $i$;
          \item $[t^n] L_i F$ is a homogenous polynomial in $\pp$ of degree $n-i$;
          \item $A = \sum_{i \geq 1}p_i \tilde{L}_i$, where the operators $L_{i-k\delta_{s,t}} := \frac{p_i^*}{s^k}+\tilde{L}_i$ represent  the Virasoro algebra, i.e. $[L_i,L_j] =
            (i-j)L_{i+j}$.
            \end{itemize}
            Then, the function $F$ satisfies the following Virasoro constraints:
	    \[L_iF = 0,\]
            for each $i \geq 1-k\delta_{s,t}$.
          \end{lemma}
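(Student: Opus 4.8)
The plan is to reverse-engineer the single Virasoro constraints from their weighted sum by exploiting the grading in $t$ and the commutation relations of the Virasoro algebra. Write $A = \sum_{i\ge 1} p_i \tilde L_i$ and set $L_j := \tfrac{p_j^*}{s^k} + \tilde L_j$ for $j \ge 1-k\delta_{s,t}$, so that the evolution equation $(\tfrac{t d}{dt} + s^k A)F = 0$ becomes, after dividing by $s^k$, the identity $\sum_{i\ge 1} p_i (L_i F) = -\tfrac{1}{s^k}(\tfrac{td}{dt})F + \sum_{i\ge 1} p_i \tfrac{p_i^*}{s^k} F$. The key observation is that the Euler-type operator $\sum_{i\ge 1} p_i p_i^*$ acts on the degree-$n$ part $[t^n]F$ (homogeneous of degree $n$ in $\pp$) as multiplication by $n$, which is exactly what $\tfrac{td}{dt}$ does; hence the right-hand side vanishes and we get the clean relation
\begin{equation}\label{eq:plan-key}
\sum_{i\ge 1} p_i \, (L_i F) = 0.
\end{equation}
So the whole content of the lemma is to deduce $L_i F = 0$ for every individual $i$ from the single relation \eqref{eq:plan-key}.

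The idea for extracting individual constraints is an induction on $i$, using the Virasoro bracket to "peel off" one $p_1$ at a time. First I would handle the base case: using the second hypothesis $p_i^* F|_{s=0} = 0$ together with \eqref{eq:plan-key}, the lowest-index constraint $L_{1-k\delta_{s,t}} F = 0$ (respectively $L_1 F = 0$ in the $s\ne t$ case) should fall out by looking at the $\pp$-degree or the behaviour at $s=0$ of \eqref{eq:plan-key} — the term with the lowest index is forced to vanish on its own because of the grading. For the inductive step, suppose $L_j F = 0$ for all $j$ below some threshold. Apply $L_{-1}$ (or more generally a suitable $L_{j}$ with small index) to \eqref{eq:plan-key}, or rather apply the operator identity $[L_m, \sum_i p_i L_i]$ and use $[L_m, p_i] = $ (a computable first-order operator, since $L_m$ is second-order in the $p$'s with the standard Virasoro realization) to reorganize the sum; the Virasoro relation $[L_m, L_i] = (m-i) L_{m+i}$ then lets me re-express $\sum_i p_i (L_m L_i F)$ in terms of $\sum_i p_i (L_i L_m F)$ plus $\sum_i (m-i) p_i (L_{m+i} F)$, and since $L_m F = 0$ by induction the first piece drops, leaving a new relation of the same shape \eqref{eq:plan-key} but shifted, from which the next constraint can be isolated.

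A cleaner way to organize the induction, which I would actually write up, is to argue degree by degree in $t$: for fixed $n$, \eqref{eq:plan-key} restricted to $[t^n]$ is a finite statement about homogeneous polynomials in $p_1,\dots,p_n$, and the map $X \mapsto \sum_i p_i X_i$ from tuples $(X_i)$ to polynomials is injective on the relevant graded pieces provided the $X_i$ are themselves of the correct (lower) degree — this is essentially the statement that $\{p_i\}$ are algebraically independent, so a relation $\sum_i p_i Y_i = 0$ with $Y_i$ not involving... well, this needs care because the $Y_i = L_i F$ do involve all the $p$'s. The honest approach is therefore the commutator induction of the previous paragraph, bootstrapped by the $s=0$ condition to kill the anomalous lowest term. \textbf{The main obstacle} I anticipate is precisely this bookkeeping: making sure the commutator manipulations are legitimate on the completed ring $R[\pp][[t,t_1,\dots,t_r]]$ (the operators $L_i$ raise/lower $\pp$-degree by finite amounts, so they act termwise and convergence is not an issue, but one must check this), and correctly tracking the index shifts so that the base case of the induction genuinely uses the hypothesis $p_i^* F|_{s=0} = 0$ rather than being circular. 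The division by $s^k$ and the role of $\delta_{s,t}$ (which distinguishes whether $s$ is the "time" variable $t$ carrying the grading, shifting the index range by $k$) also needs to be handled transparently, since that is what makes the two cases $s=t$ and $s\ne t$ in the conclusion differ.
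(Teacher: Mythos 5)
Your reduction of the evolution equation to the weighted-sum identity $\sum_{j\ge 1} p_j\, L_{j-k\delta_{s,t}}F=0$ (via the Euler operator $\sum_j p_jp_j^*$ acting as $\tfrac{td}{dt}$ on the graded pieces) is correct, but it is only the easy direction already noted in the paper; the content of the lemma is the converse extraction, and the mechanism you propose for it has a genuine gap. First, the base case: every summand $p_j L_{j-k\delta_{s,t}}F$ has the same bidegree in $(t,\pp)$ (each operator lowers $\pp$-degree by exactly what the prefactor $p_j$ restores), so no grading argument can isolate the lowest-index term; and multiplying the identity by $s^k$ and setting $s=0$ only reproduces $\sum_j p_jp_j^*F\big|_{s=0}=0$, i.e.\ nothing beyond the hypothesis --- it certainly does not yield the full constraint $L_{1-k\delta_{s,t}}F=0$, which is a statement to all orders in $s$. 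Second, the inductive step: applying $L_m$ to the sum and using $[L_m,L_j]=(m-j)L_{m+j}$ leaves, besides the shifted sum, the terms $\sum_j [\tilde L_{m+k\delta_{s,t}},p_j]\,L_{j-k\delta_{s,t}}F$; in the abstract setting of the lemma these commutators are not specified (only the singular part $p_i^*/s^k$ of $L$ is pinned down), they multiply constraints not yet known to vanish, and the resulting relation does not isolate any single new constraint. Note also that only $L_j$ with $j\ge 1-k\delta_{s,t}$ exist by hypothesis, so ``apply $L_{-1}$'' is not available in the main case $s\neq t$.

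The paper runs the induction along a different axis, and that is the idea your plan is missing: set $\widehat L_i:=s^kL_i$ and induct on the power $n$ of $s$, proving $[s^n]\widehat L_iF=0$ simultaneously for all $i$. The base case $[s^0]\widehat L_iF=p_i^*F|_{s=0}=0$ is exactly the hypothesis. In the step, one computes $[\tfrac{td}{dt}+s^kA,\widehat L_i]F$ in two ways; all the commutator terms one cannot compute (your $[\tilde L,p_j]$-type terms) carry an extra factor $s^k$, so at order $s^n$ they only involve $[s^l]\widehat L_jF$ with $l<n$, which vanish by the induction hypothesis. What survives is the diagonal term, giving $\tfrac{td}{dt}[s^n]\widehat L_iF=-(i+(k+n)\delta_{s,t})[s^n]\widehat L_iF$, a strictly negative eigenvalue for the grading operator $\tfrac{td}{dt}$, whence $[s^n]\widehat L_iF=0$. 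This interplay between the $s$-expansion and the nonnegativity of the $t$-grading is what actually forces each individual constraint, and it has no counterpart in your index-by-index commutator scheme.
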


We directly obtain 

\begin{theorem}[Virasoro constraints for $b$-monotone Hurwitz numbers]\label{thm:virasoro}
	The function $\tau_b^{Z}$ satisfies the following Virasoro constraints, for $i \geq 1$:
	\begin{align}\label{eq:virasoro}
L^{Z}_i \tau_b^{Z}(t;\pp,u) = 0,
\end{align}
with
\begin{align}
  \label{eq:defLZi}
         L^{Z}_i &:= \frac{p_{i}^*}{u}+\bigg((1+b)\sum_{m+n=i}p_m^*p_n^*+ \sum_{n \geq 1}
               p_n p^*_{n+i}+b(i-1)p_i^*-\frac{t\delta_{i,1}}{(1+b)}\bigg).
\end{align}
      \end{theorem}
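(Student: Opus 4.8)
The plan is to obtain Theorem~\ref{thm:virasoro} as a direct application of Lemma~\ref{lemma:evolutionImpliesVirasoro} to the function $F=\tau_b^Z$, with $s=t$, $k=1$, and $A=u^{-1}E_b|_{\text{without the leading }u}$ suitably normalized; concretely the operator $A$ such that $\tfrac{td}{dt}\tau_b^Z = -s\,A\,\tau_b^Z$ will be read off from the evolution equation \eqref{eq:evolution}, \eqref{eq:Laplace-Beltrami} after pulling out one power of $t$ from the $tp_1/(1+b)$ term. The candidate Virasoro generators are exactly the $L_i^Z$ written in \eqref{eq:defLZi}, and one sets $\tilde L_i^Z := L_i^Z - p_i^*/u$ for $i\ge1$.

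First I would check the three hypotheses of the lemma. The homogeneity/degree condition $[t^n]\tau_b^Z$ homogeneous of degree $n$ in $\pp$ is immediate from \eqref{eq:defmain}, since $J_\lambda^{(1+b)}(\pp)$ for $\lambda\vdash n$ is homogeneous of degree $n$ in the power sums. The vanishing condition $p_i^*\tau_b^Z|_{t=0}=0$ holds because the $t=0$ term of \eqref{eq:defmain} is the constant $1$ (the empty partition), killed by every $p_i^*=i\,\partial/\partial p_i$. The evolution equation $(\tfrac{td}{dt}+tA)\tau_b^Z=0$ is Theorem~\ref{thm:evolution} once we factor $E_b = u\big(\tfrac{tp_1}{1+b} - D'_b\big)$ where $D'_b$ is the bracketed differential operator, and identify $A = -u\big(\tfrac{p_1}{1+b}\,\delta - D'_b\big)$ appropriately; the key algebraic point is that $A$ has the form $\sum_{i\ge1}p_i\tilde L_i^Z$ with the $\tilde L_i^Z$ from \eqref{eq:defLZi}. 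This is a bookkeeping exercise: the $tp_1/(1+b)$ term contributes the $-t\delta_{i,1}/(1+b)$ piece to $\tilde L_1^Z$; the term $(1+b)\sum p_{m+n}p_m^*p_n^*$ contributes, after writing $p_{m+n}=p_i$ with $i=m+n$, the piece $(1+b)\sum_{m+n=i}p_m^*p_n^*$ to $\tilde L_i^Z$; the term $\sum p_np_mp_{n+m}^*$ contributes, after isolating one $p_i$ factor, $\sum_{n}p_np_{n+i}^*$ to $\tilde L_i^Z$ (the symmetry $p_np_m = p_mp_n$ accounts for the combinatorial factor so that no $2$ appears); and $b\sum(n-1)p_np_n^*$ contributes $b(i-1)p_i^*$.

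The genuine content — the step I expect to need the most care — is verifying the last hypothesis of the lemma, namely that the operators $L_i := p_i^*/t + \tilde L_i^Z$ (for $i\ge0$, with $L_0$ and $L_{-?}$ handled by the $\delta_{s,t}$ shift; here $k=1$ so we get constraints for $i\ge0$) satisfy the Virasoro commutation relations $[L_i,L_j]=(i-j)L_{i+j}$. This is a standard but not entirely trivial computation with the Heisenberg-type operators $p_i$ and $p_i^* = i\partial_{p_i}$, using $[p_i^*,p_j]=i\,\delta_{i,j}$ (equivalently $[\partial_{p_i},p_j]=\delta_{i,j}$) and keeping track of the $(1+b)$ and $b$-dependent terms. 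The quadratic-in-$p^*$ part and the $p p^*$ part together form the familiar bosonic realization $\tfrac12\sum_{a+b=i}{:}\,\partial_a\partial_b{:}\, + \sum_a a\,p_a\partial_{a+i}$ of Virasoro up to central/linear corrections, and the extra diagonal term $b(i-1)p_i^*$ together with the $-t\delta_{i,1}/(1+b)$ and the $p_i^*/t$ background-charge term conspire to preserve the bracket — this is essentially the $\beta$-deformed Virasoro algebra. I would verify $[L_1,L_{-1}]=2L_0$, $[L_i,L_0]=iL_i$, and $[L_i,L_j]=(i-j)L_{i+j}$ for $i,j\ge1$ by separating the check into the pure $t$-dependent part, the pure differential part, and the cross terms, citing that the $b=0$ case is classical and that the $b$-corrections (the terms $b(i-1)p_i^*$ and the shift $i\mapsto i-\delta_{s,t}$) are exactly those appearing in known $\beta$-Virasoro presentations. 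Once the commutation relations are confirmed, Lemma~\ref{lemma:evolutionImpliesVirasoro} applies verbatim and yields $L_i^Z\tau_b^Z=0$ for all $i\ge1$, which is \eqref{eq:virasoro}.
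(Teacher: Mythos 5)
Your overall strategy (evolution equation plus Lemma~\ref{lemma:evolutionImpliesVirasoro} plus a direct check of the Virasoro commutation relations) is the paper's strategy, and your decomposition $A=\sum_{i\ge1}p_i\tilde L_i$ is the right bookkeeping. However, there is a genuine error in how you instantiate the lemma: you take $s=t$, while the correct (and the paper's) choice is $s=u$, $k=1$. With $s=t$ the first hypothesis of the lemma cannot be met. The evolution operator is $E_b = u\bigl(\tfrac{tp_1}{1+b} - D'_b\bigr)$, and only the $p_1$-term carries an explicit factor of $t$; the Laplace--Beltrami-type terms carry none. So there is no way to write the equation as $\bigl(\tfrac{td}{dt}+t\,A\bigr)\tau_b^Z=0$ with $A$ independent of $t$ — ``pulling out one power of $t$ from the $tp_1/(1+b)$ term'' forces a $t^{-1}$ into the remaining terms of $A$, violating the hypothesis. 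Moreover, even if one ran the induction with $s=t$, the operators produced by the lemma would be $L_{i-1}=\tfrac{p_i^*}{t}+\tilde L_i$ (note the $\delta_{s,t}$ shift and the $t$ in the denominator), which are \emph{not} the operators $L^Z_i=\tfrac{p_i^*}{u}+\tilde L_i$ of the theorem; your proposal is internally inconsistent on this point, since you first declare the candidate generators to be the $L^Z_i$ of \eqref{eq:defLZi} and later work with $p_i^*/t$.

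The fix is exactly the paper's application: take $s=u$, $k=1$. Then $E_b=u\,A'$ with $A'=\tfrac{tp_1}{1+b}-D'_b$ manifestly independent of $u$, so the evolution equation has the required form with $A=-A'$; since $s\neq t$ one has $\delta_{s,t}=0$, no index shift occurs, and the lemma's operators are precisely $L_i=\tfrac{p_i^*}{u}+\tilde L_i=L^Z_i$ for $i\ge1$, which is the statement of the theorem. The hypothesis you must check is then $p_i^*\tau_b^Z\big|_{u=0}=0$ (not at $t=0$): this holds because each box contributes a factor $u/(1+u\,c_b(\square))$, so $\tau_b^Z\big|_{u=0}=1$. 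Your remaining points — the identification of $\tilde L_i$ from $E_b$ and the assertion that the $\{L^Z_i\}_{i\ge1}$ close under $[L_i,L_j]=(i-j)L_{i+j}$ by a direct (if somewhat tedious) computation — agree with the paper, which dismisses the commutation check as ``a direct check''.
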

      \begin{proof}[Proof of Theorem~\ref{thm:virasoro}]
	      The fact that the family of operators  $\{L^{Z}_i, i\geq 1\}$ satisfy Virasoro relations  is a direct check. The statement is then a consequence of the evolution equation (Theorem~\ref{thm:evolution}) and Lemma~\ref{lemma:evolutionImpliesVirasoro} applied to
	      $F=\tau_b^{Z}, s=u, k=1$.
\end{proof}
The reader is refered to Appendix~\ref{sec:otherModels} to see applications of Lemma~\ref{lemma:evolutionImpliesVirasoro} with other values of $s$ and $k$.
It remains to prove the lemma.
	  \begin{proof}[Proof of Lemma~\ref{lemma:evolutionImpliesVirasoro}]
            In order to show that $L_iF = 0$ we will prove by induction on $n\geq 0$ that for each $i \geq
            1-k\delta_{s,t}$ the coefficient $[s^n]\widehat{L}_iF = 0$,
            where $\widehat{L}_i := s^kL_i$. For $n=0$
            one has
            \[ [s^0]\widehat{L}_iF = [s^0]p_i^*F = p_i^*
              F\big|_{s=0} = 0,\]
            by assumption. Fix a  positive integer $n$ and assume that for each $i \geq
            1-k\delta_{s,t}$ and for each $l < n$ the coefficient $[s^l]\widehat{L}_iF = 0$.
            We have that
            \[ \left[\frac{td}{dt}+s^kA, \widehat{L}_i\right]F =
              \left(\frac{td}{dt}+s^kA \right) \widehat{L}_i F.\]
            On the other hand, the commutator on the LHS can be evaluated as follows. First, since $[t^n]\widehat{L}_i F$ is a homogeneous polynomial of degree $n-i$ in $\pp$,
            \[\left[\frac{td}{dt},\widehat{L}_i\right] F = i\widehat{L}_i F + \left[\sum_{n\geq 1} p_n p_n^*, \widehat{L}_i\right]F\]
            therefore
            \begin{equation*}
            \begin{aligned}
             \left[\frac{td}{dt}+s^kA, \widehat{L}_i\right]F &= \sum_{j
                \geq 1} [s^{k} p_j L_{j-k\delta_{s,t}}, s^{k}L_i]F + i\widehat{L}_i F\\
                & = s^{2k}\bigg(\sum_{j \geq 1} \big(p_j
              [L_{j-k\delta_{s,t}},L_i] +
              [p_j,L_i]L_{j-k\delta_{s,t}}\big)\bigg)F + i\widehat{L}_i F.
              \end{aligned}
              \end{equation*}
            Substituting $[L_j,L_i] =
            (j-i)L_{i+j}$ and $ [p_j,L_i] =
            \frac{-(i+k\delta_{s,t})\delta_{j,i+k\delta_{s,t}}}{s^k}+O_s(1)$, we end up with
            \[ \left[\frac{td}{dt}+s^kA, \widehat{L}_i\right]F  = s^k\sum_{j
                \geq 1} \big(p_j
              (j-i-k\delta_{s,t})\widehat{L}_{i+j-k\delta_{s,t}}+B\widehat{L}_{j-k\delta_{s,t}}\big)F-k\delta_{s,t}
              \widehat{L}_iF, \]
            where $B$ is an operator satisfying $B = O_s(1)$.
            Therefore, we have the following equality
            \[ \left(\frac{td}{dt}+s^kA \right) \widehat{L}_i F = s^k\sum_{j
                \geq 1} \big(p_j
              (j-i-k\delta_{s,t})\widehat{L}_{i+j-k\delta_{s,t}}+B\widehat{L}_{j-k\delta_{s,t}}\big)F-k\delta_{s,t}
              \widehat{L}_iF.\]
            We now compare the coefficients of $s^{n}$ on
            the both side of the equation. Using our induction
            hypothesis and the fact that
            $[s^{n}]\frac{td}{dt}\widehat{L}_iF = n\delta_{s,t} [s^{n}]\widehat{L}_iF+\frac{td}{dt}[s^{n}]\widehat{L}_iF$ we have the following identity:
            \[ \frac{td}{dt}[s^{n}]\widehat{L}_iF = -(k+n)\delta_{s,t}
              [s^{n}]\widehat{L}_iF,\]
            which implies that
            $[s^{n}]\widehat{L}_iF = 0.$
		  This concludes the induction step.
          \end{proof}

\begin{remark}\label{rem:MMS11}
	The Virasoro constraints of \cref{thm:virasoro} have already
          appeared in the work of Mironov, Morozov and
          Shakirov~\cite{MironovMorozovShakirov2010} as a special limit, called ``pure gauge'' limit, of a set of differential equations written by Kaneko \cite{Kaneko1993}. Those differential equations have furthermore been shown to be solved by some $\beta$-deformation of a Selberg integral. The pure gauge limit of this integral could thus provide an integral representation of our function $\tau^Z_b$. Note
          that our proof of the Virasoro constraints, which is combinatorial in nature, deals directly with Jack symmetric functions
          and does not rely on matrix integrals, thus answering a question from \cite{MironovMorozovShakirov2010}.
\end{remark}

          \subsection{$b$-deformed contents and Jack characters}

          Let $\mathcal{C}_b(\lambda) := \{c_b(\square)\colon \square
          \in \lambda\}$ denote the multiset of $b$-deformed contents
          and recall that (non-normalized) \emph{Jack characters}
          $\theta_\mu(\lambda) := [p_\mu]J_\lambda$ are the coefficients
          of Jack symmetric functions expanded in the power-sum
          basis (see
          \cite{DolegaFeray2016} for the explanation of this terminology).
		  It was proved by F\'eray~\cite[Proposition
          4.1]{Feray2012} that $\{\theta_\mu(\cdot)\}_{\mu \vdash n}$
          is a linear basis (over $\mathbb{Q}(b)$) of the space of
          functions $f\colon \mathcal{P}_n \to \mathbb{Q}(b)$ on the
          set $\mathcal{P}_n$ of partitions of size $n$. In particular,
          one can express the complete homogenous functions evaluated in
            $b$-deformed contents as a linear combination of Jack characters:
          \[ h_k(\mathcal{C}_b(\lambda)) = \sum_{\mu \vdash
              |\lambda|}a^k_\mu\theta_\mu(\lambda).\]
          We now prove a conjecture of Féray showing that the coefficients $a^k_\mu$ have a recursive structure.

	  \begin{theorem}[{\cite[Conjecture 4.3]{Feray2012}}]\label{thm:Feray}
            The coefficients $a^k_{\rho}$ fulfill the linear relation:
             \begin{equation}
              \label{eq:Feray}
              a^k_{\rho \cup (m)} = \delta_{m,1}a^k_{\rho}+\sum_{r+s=m}a^{k-1}_{\rho \cup (r,s)}+(1+b)\sum_{i=1}^{\ell(\rho)}\rho_i a^{k-1}_{\rho\setminus{\rho_i} \cup (\rho_i+m)}+b(m-1) a^{k-1}_{\rho \cup (m)},
              \end{equation}
			for any $m \geq 1$,
where $\rho\setminus m$ (respectively $\rho\cup (m)$) means that a row of size $m$ is removed from (respectively added to) $\rho$.
            \end{theorem}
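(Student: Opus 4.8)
The plan is to feed the power-sum expansion of $\tau_b^{Z}$ into the Virasoro constraints of \cref{thm:virasoro}. It will turn out that $\tau_b^{Z}$ literally encodes the numbers $a^k_\rho$, and that \eqref{eq:Feray} is the content of a single constraint $L^{Z}_m\tau_b^{Z}=0$, read off one monomial at a time.

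\emph{Step 1: a power-sum expansion of $\tau_b^{Z}$.} By F\'eray's basis theorem \cite[Prop.~4.1]{Feray2012} each $a^k_\mu$ is a well-defined element of $\QQ(b)$ attached to the pair $(k,\mu)$ (computed from any $\lambda$ with $|\lambda|=|\mu|$), with $a^k_\mu=0$ for $k<0$. Using the generating series $\prod_i(1-x_iz)^{-1}=\sum_{k\ge0}h_k(x)z^k$ and the homogeneity of $h_k$, the content product of \cref{def:mainfunction} becomes
\begin{equation*}
  \prod_{\square\in\lambda}\frac1{u^{-1}+c_b(\square)}=u^{|\lambda|}\prod_{\square\in\lambda}\frac1{1+u\,c_b(\square)}=u^{|\lambda|}\sum_{k\ge0}(-u)^k\,h_k\big(\mathcal{C}_b(\lambda)\big).
\end{equation*}
Substituting $h_k(\mathcal{C}_b(\lambda))=\sum_\mu a^k_\mu\theta_\mu(\lambda)$ into \eqref{eq:defmain} and using the identity $\sum_{\lambda\vdash n}\tfrac{J_\lambda^{(1+b)}(\pp)}{j_\lambda^{(b)}}\theta_\mu(\lambda)=\tfrac{p_\mu}{z_\mu(1+b)^{\ell(\mu)}}$ for $\mu\vdash n$ (obtained from the Cauchy identity \eqref{eq:Cauchy} by extracting the coefficient of $q_\mu=\prod_iq_{\mu_i}$), one gets
\begin{equation*}
  \tau_b^{Z}(t;\pp,u)=\sum_{k\ge0}\sum_\mu\frac{(-1)^k a^k_\mu}{z_\mu(1+b)^{\ell(\mu)}}\;t^{|\mu|}u^{|\mu|+k}\,p_\mu.
\end{equation*}

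\emph{Step 2: reading off \eqref{eq:Feray}.} Fix a partition $\rho$ and $m\ge1$; set $L:=\ell(\rho)$ and $N:=|\rho|$. Apply the Virasoro constraint $L^{Z}_m\tau_b^{Z}=0$ (\cref{thm:virasoro} with $i=m$, $L^{Z}_m$ as in \eqref{eq:defLZi}) and extract the coefficient of the monomial $p_\rho\,t^{N+m}u^{N+m+k-1}$. Since $p_i^{*}$ sends $p_\mu$ to $i\,m_i(\mu)\,p_{\mu\setminus(i)}$, a direct inspection shows that exactly five families of source terms of the expansion above contribute, one for each summand of $L^{Z}_m$: the term $\tfrac{p_m^{*}}{u}$ contributes from the source indexed by $\rho\cup(m)$ carrying $a^k_{\rho\cup(m)}$ (this yields the left-hand side of \eqref{eq:Feray}); $(1+b)\sum_{r+s=m}p_r^{*}p_s^{*}$ from the sources $\rho\cup(r,s)$ carrying $a^{k-1}_{\rho\cup(r,s)}$; $\sum_n p_np_{n+m}^{*}$ from the sources $(\rho\setminus(n))\cup(n+m)$ carrying $a^{k-1}_{(\rho\setminus(n))\cup(n+m)}$ for each $n$ with $m_n(\rho)\ge1$; $b(m-1)p_m^{*}$ from $\rho\cup(m)$ carrying $a^{k-1}_{\rho\cup(m)}$; and $-\tfrac{t\delta_{m,1}}{1+b}$ from $\rho$ carrying $a^k_\rho$ (this yields the $\delta_{m,1}a^k_\rho$ term). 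The combinatorial prefactors then simplify by the elementary identities $z_{\rho\cup(m)}=m\,m_m(\rho\cup(m))\,z_\rho$, the equality of the prefactor $r\,s\,m_s(\rho\cup(r,s))\,m_r(\rho\cup(r))$ released by $p_r^{*}p_s^{*}$ with $z_{\rho\cup(r,s)}/z_\rho$ (including when $r=s$), and $z_{(\rho\setminus(n))\cup(n+m)}=\tfrac{(n+m)(m_{n+m}(\rho)+1)}{n\,m_n(\rho)}\,z_\rho$; moreover $\sum_{n:\,m_n(\rho)\ge1}n\,m_n(\rho)\,(\cdot)=\sum_{i=1}^{\ell(\rho)}\rho_i\,(\cdot)$ turns the third contribution into the shape appearing in \eqref{eq:Feray}. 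After these cancellations every contribution carries the common factor $(-1)^k/(z_\rho(1+b)^{L+1})$, and dividing $L^{Z}_m\tau_b^{Z}=0$ by it yields \eqref{eq:Feray} verbatim.

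\emph{The main obstacle} is purely the bookkeeping of Step 2: one must check that no further source term hits the chosen monomial --- in particular that the ``splitting'' term $\sum_n p_np_{n+m}^{*}$ reaches $p_\rho$ through a unique $n$ for each source, and that the diagonal case $r=s$ in the ``merging'' term is weighted correctly --- and one must track the passage between the normalization $\tfrac{(-1)^k a^k_\mu}{z_\mu(1+b)^{\ell(\mu)}}$ and the bare coefficient $a^k_\mu$ without dropping a power of $1+b$ or a multiplicity factor. Once this is organized summand by summand the argument is immediate.
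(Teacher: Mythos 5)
Your proposal is correct and follows essentially the same route as the paper: both expand $\tau^Z_b$ in the power-sum basis (your Cauchy-identity derivation is equivalent to the paper's use of the orthogonality $\sum_\lambda \theta_\rho(\lambda)\theta_\mu(\lambda)/j^{(b)}_\lambda=\delta_{\mu,\rho}/(z_\rho(1+b)^{\ell(\rho)})$) to identify $a^k_\rho$ as an explicit coefficient of $\tau^Z_b$, and then read \eqref{eq:Feray} off the single Virasoro constraint $L^Z_m\tau^Z_b=0$. Your Step 2 merely spells out the coefficient bookkeeping that the paper declares ``immediate,'' and your prefactor identities ($z_{\rho\cup(m)}=m\,m_m(\rho\cup(m))z_\rho$, the merging/splitting factors, and the powers of $1+b$) all check out.
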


            \begin{proof}
              Going back to the \cref{def:mainfunction} of $\tau^Z_b$, we expand $\prod_{\square
	\in \lambda}\frac{1}{u^{-1}+c_b(\square)}$ onto complete homogeneous symmetric functions, so that
              \[ \tau^Z_b = \sum_{n \geq 0} (ut)^n\sum_{\lambda \vdash n} 
		\frac{J_\lambda^{(b)}(\pp)}{j_\lambda^{(b)}}\sum_{k
                  \geq 0}(-u)^k h_k(\mathcal{C}_b(\lambda)).\]
            It is further rewritten as
              \[ \tau^Z_b = \sum_{n \geq 0} (ut)^n\sum_{\lambda,\rho,\mu \vdash n} 
		\frac{\theta_\rho(\lambda)\theta_\mu(\lambda)}{j_\lambda^{(b)}}p_\rho\sum_{k
                  \geq 0}(-u)^k a^k_\mu(\lambda)\]
              by expanding Jack symmetric functions in the power-sum
              basis and using the definition of $a^k_{\rho}$. We now extract coefficients starting with
                            \[ [p_\rho]\tau^Z_b = (ut)^{|\rho|}\sum_{\lambda,\mu \vdash n} 
		\frac{\theta_\rho(\lambda)\theta_\mu(\lambda)}{j_\lambda^{(b)}}\sum_{k
                  \geq 0}(-u)^k a^k_\mu= (ut)^{|\rho|}\frac{1}{z_\rho(1+b)^{\ell(\rho)}}\sum_{k
                  \geq 0}(-u)^k a^k_\rho,\]
              where the last equality follows the orthogonality relation
              \[ \sum_{\lambda, \vdash n} 
		\frac{\theta_\rho(\lambda)\theta_\mu(\lambda)}{j_\lambda^{(b)}}
                = \frac{\delta_{\mu,\rho}}{z_\rho(1+b)^{\ell(\rho)}}\]
              which is a consequence of the orthogonality of the
              power-sum symmetric functions and Jack symmetric
              functions with respect to $\langle,\rangle_b$.
              Therefore, the coefficient $a^k_\rho$ can be obtained via the following coefficient extraction
			\[ a^k_\rho = [(ut)^{|\rho|}(-u)^kp_\rho](1+b)^{\ell(\rho)}z_\rho\tau^Z_b.\]
			Relation \eqref{eq:Feray} is then an immediate consequence of the Virasoro
              constraint
              $L^{Z}_m \tau^Z_b = 0.$
            \end{proof}

            \begin{remark}
              F\'eray suggested that the quantities $a^k_{\rho}$ might
              be interpreted combinatorially, which is known for
              $b=0,1$ due to the connection with Jucys-Murphy elements
              \cite{Novak2010,Zinn-Justin2010,Matsumoto2010}. In the following section we
              provide such a combinatorial interpretation.
              \end{remark}

\section{Graphical model: Monotone Hurwitz Maps}
\label{sec:model}

In this section we describe a combinatorial model of graphs whose
partition function is $\tau_{b}^{Z}(t;\pp,u)$ (up to a minor rescaling of variables).
Note that \cite[Section
6]{ChapuyDolega2020} provides such a model for general
constellations\footnote{General constellations from
  \cite{ChapuyDolega2020} give a combinatorial model for double
  $b$-deformed (weighted)
  Hurwitz numbers, while here we are working with single Hurwitz
  numbers. In the classical case $b=0$ double monotone Hurwitz numbers
  give a $1/N$-expansion of the HCIZ integral and we asked in
  \cite[Section~6.5]{ChapuyDolega2020} if our combinatorial expansion
  extends this result and gives a $1/N$-expansion of the $\beta$-deformation of HCIZ integral
  introduced in \cite{BrezinHikami2003}. The answer for this question
  is affirmative, which follows from the expansion of the
  $\beta$-HCIZ integral in Jack symmetric functions derived in~\cite{HikamiBrezin2006}.}
which can be specialized to our case, but the model we describe here
is arguably simpler and more explicit in our case. It also covers the
model of~\cite{GouldenGuayPaquetNovak2013}, which agrees with our
model at $b=0$ via classical encoding of
permutations factorizations into transpositions by Hurwitz maps~\cite{Moszkowski1989,Poulalhon1997,OkounkovPandharipande2009}.
In the following definition, we allow maps on the sphere with one
vertex, one face and no edges (conventionnally viewed as a 2-cell embedding of the one-vertex graph on the sphere).
\begin{definition}[Hurwitz map]
	A labelled \emph{Hurwitz map} $M$ with $n$ vertices and $r$ edges is a 2-cell embedding of a loopless multigraph on a compact surface, with the following properties: 
        \begin{itemize}[itemsep=0pt, topsep=0pt,parsep=0pt, leftmargin=12pt]

		\item [(a)] the vertices of the map are labelled from $1$ to $n$, and the neighbourhood of each vertex is equipped with an orientation. Moreover each vertex has a distinguished corner called \emph{active}, and {\it we let $c_i$ be the active corner incident to the vertex $i$}. The corner $c_i$ is most easily represented as an arrow pointing to $i$ in the corresponding angular sector.

		\item [(b)] the edges of the map are labelled from $1$ to $r$; {\it We let $e_i$ be the edge labelled $i$ and $M_i$ be the submap\footnote{given a subset of edges $E$ in a map $M$, the submap of $M$ induced by $E$ is best pictured by viewing $M$ as a ribbon graph, and taking the ribbon graph with the same vertex set as $M$ but keeping only edges (ribbons) in $E$, keeping all the topological incidences between these edges and the vertices; it possibly lives on a different surface than the map $M$ itself.} of $M$ induced by edges $e_1,e_2,\dots,e_i$}.

		\item [(c)] for each $i$ in $[1,r]$, let $a_i<b_i$ be the two vertices incident to the edge $e_i$. Then in the map $M_{i}$, the following is true: 
	
        \begin{itemize}[itemsep=0pt, topsep=0pt,parsep=0pt, leftmargin=24pt]
		\item [(c1)] In the local orientation around vertex $b_i$, the active corner  $c_{b_i}$ immediately follows the edge $e_i$ (see Figure~\ref{fig:hurwitzMap}-Left); 
		\item [(c2)] the corner which is opposite to $c_{b_i}$ with respect to the edge $e_i$ is the active corner $c_{a_i}$ of vertex $a_i$ (see Figure~\ref{fig:hurwitzMap}-Left);
		\item [(c3)] if the edge $e_i$ is disconnecting in $M_i$, then the orientations of the vertices $a_i$ and $b_i$ are compatible in $M_i$ (i.e. they can be jointly extended to a neighbourhood of $e_i$).
	\end{itemize}
\end{itemize}
	If moreover one has $b_1\leq b_2\dots \leq b_r$, then the map is called \emph{monotone}.

	The \emph{degree} of a face is its number of active corners. These face degrees form a partition of $n$ called the \emph{degree profile} of $M$. 
\end{definition}

\begin{figure}
	\centering
	\includegraphics{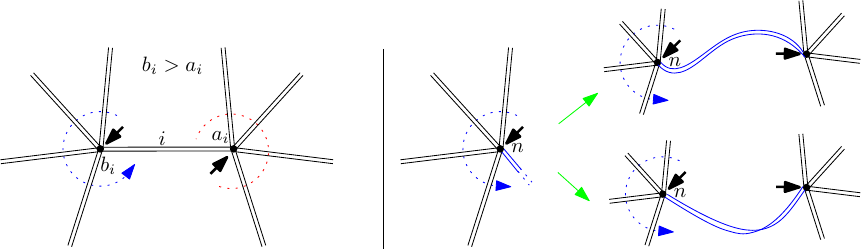}
	\caption{Left: the local constraints around the edge $e_i$ in a Hurwitz map. Edges of label $>i$ may be incident to the vertices $a_i$ or $b_i$ but they are not represented on this picture. Active corner are represented by arrows.
	Right: when we want to attach a new edge of maximal label to the vertex $n$, there is a unique possible corner of attachment around this vertex. Given any other active corner of the map, there are a priori two choices of attachment of the other end of the edge, corresponding to the two sides of the arrow, each corresponding to a given twist of the edge. When this new edge joins two connected components together, property (c3) asserts that only one of these two choices is valid.}\label{fig:hurwitzMap}
\end{figure}

	We stress that the local properties (c) in the above definition hold in the map $M_i$, not $M$. That is to say, for a given $i$, edges of label greater than $i$ play no role in this constraint.

\begin{remark}
	When the underlying surface is oriented, the constraint (c) is
        equivalent to the fact that in the map $M=M_r$, vertices are
        oriented according to surface orientation and that the edge
        labels around each vertex are increasing, starting from the
        active corner. This is easily seen using (c) and induction on
        $i$. Such maps are often called \emph{Hurwitz maps} in the map
        community, we thus give here an non-oriented generalization of
        this notion. Oriented Hurwitz maps are in bijection with
        tuples of transpositions by viewing the edge $e_i=(a_i,b_i)$
        as a transposition in $\mathfrak{S_n}$. In this
        correspondence, the sequence of active corners around faces of
        the map correspond to the cycles of the permutation
        $\phi=e_1\dotsm e_r$. Hence the cycle type of $\phi$ matches
        the degree profile of $M$, which is why these maps are
        conveniently used as a topological model for (monotone or not)
        Hurwitz numbers (see~\cite{OkounkovPandharipande2009}).  
\end{remark}

The \emph{canonical decomposition} of a monotone Hurwitz map $M$ is
the following algorithm:
\begin{itemize}
  \item[(1)] if the vertex of maximum label is isolated, erase it; if
    not, remove the edge of maximum label (note that it has to be
    incident to that vertex by monotonicity);
  \item[(2)] iterate until no vertex remains.
    \end{itemize}

As in~\cite{ChapuyDolega2020}, every time an edge is deleted by the algorithm, we collect a certain \emph{$b$-weight} in $\{1,b\}$ subject to the constraints of Measures of Non Orientability (MON). We refer the reader to that paper for details about MON. Let us just say that a MON is a way to associate a weight to each edge deletion which depends on the topological relation between the edge and the map it is deleted from. In particular, when deleting an edge $e$ results in a map $M$, the associated $b$-weight is equal to (Figure~\ref{fig:MON}):
\begin{itemize}
	\item $1$ if $e$ joins two different connected components of $M$, or if $e$ splits a face of $M$ into two distinct faces;
	\item $b$ if $e$ is a twisted diagonal added inside a face of $M$;
	\item $1$ or $b$ if $e$ joins two distinct faces inside the same connected component of $M$. Moreover, the $b$-weight associated to $e$ is $1$ if and only if the $b$-weight associated to its twisted edge $\tilde{e}$ is $b$. Finally, the $b$-weight is always $1$ when $M\cup \{e\}$ is orientable.
\end{itemize}
Moreover, we also ask the $b$-weight to depend only on the connected component in which the edge is deleted. Figure~\ref{fig:MON} summarizes what we need to know about measures of non-orientability and $b$-weights. 
The product of all $b$-weights collected by the canonical
decomposition is a monomial of the form $b^{\nu(M)}$, where $\nu(M)$
is an integer associated with the Hurwitz map $M$. Note that $\nu(M)=0$ if and only if $M$ is orientable.

Finally, we let $cc(M)$ be the number of connected components of $M$.

\begin{proposition}
The series $\tilde \tau_{b}^{Z}(t;\pp,u):= \tau_{b}^{Z}(-t/u;\pp,-u)$ is the generating function of monotone labelled Hurwitz maps in the sense that
	\begin{align}\label{eq:partitionFunction}
		\tilde \tau_{b}^{Z}(t;\pp,u) = \sum_{n\geq 0} \frac{t^n}{(1+b)^{cc(M)}n!} 
		\sum_{\ell \geq 0} u^\ell \sum_{M\in \mathcal{M}(n,\ell)} b^{\nu(M)} p_{\lambda(M)},
	\end{align}
	where $\mathcal{M}(n,\ell)$ is the set of monotone labelled Hurwitz maps with $n$ vertices and $\ell$ edges, and where $\lambda(M)\vdash n$ is the degree profile of $M$.
\end{proposition}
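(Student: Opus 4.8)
The plan is to identify the generating series $F(t;\pp,u)$ defined by the right‑hand side of~\eqref{eq:partitionFunction} with $\tilde\tau_b^{Z}$ by showing that both are the unique solution of the same evolution equation with the same initial condition. First note that $F$ is well defined: for fixed $n$ and $\ell$ the set $\mathcal{M}(n,\ell)$ is finite, so $F\in\QQ(b)[\pp][[t,u]]$. Transporting the evolution equation of \cref{thm:evolution} through the change of variables $t\mapsto -t/u$, $u\mapsto -u$ of the statement (under which $\tfrac{td}{dt}$ is unchanged while $E_b$ becomes $\tfrac{tp_1}{1+b}+2uD_b$, as is visible in the proof of \cref{thm:evolution}), one gets that $\tilde\tau_b^{Z}$ is the unique element of $\QQ(b)[\pp][[t,u]]$ with constant term $1$ solving
\[
	\frac{td}{dt}G=\Bigl(\frac{tp_1}{1+b}+2uD_b\Bigr)G,
\]
uniqueness holding because at order $t^n$ this reads $(n-2uD_b)[t^n]G=\tfrac{p_1}{1+b}[t^{n-1}]G$, and $n-2uD_b$ is invertible on $\QQ(b)[\pp][[u]]$, being $n$ modulo $u$. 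So it remains to check this differential equation for $F$.

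Writing $F=\sum_{n\ge0}\tfrac{t^n}{n!}f_n$ with $f_n:=\sum_{\ell\ge0}u^\ell\sum_{M\in\mathcal{M}(n,\ell)}\tfrac{b^{\nu(M)}}{(1+b)^{cc(M)}}p_{\lambda(M)}$, the equation is equivalent to $f_0=1$ (the empty map) and the recursion $f_n=\tfrac{p_1}{1+b}f_{n-1}+\tfrac{2u}{n}D_b f_n$ for $n\ge1$, which I would establish using the canonical decomposition. If the maximal‑label vertex $v_n$ is isolated, erasing it is a bijection onto $\mathcal{M}(n-1,\ell)$ that removes one face of degree $1$ and one connected component and collects no $b$‑weight (so $p_{\lambda(M)}=p_1\,p_{\lambda(M')}$, $cc(M)=cc(M')+1$, $\nu(M)=\nu(M')$), and summing these contributes exactly the term $\tfrac{p_1}{1+b}f_{n-1}$. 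If $v_n$ is not isolated, monotonicity forces the maximal‑label edge to be incident to $v_n$; deleting it produces $M'\in\mathcal{M}(n,\ell-1)$ together with a $b$‑weight in $\{1,b\}$, and reconstructing $M$ amounts to re‑adding this edge incident to $v_n$ in the ways permitted by (c1)--(c3) --- once the other endpoint $a<n$ (equivalently the corner $c_a$) and the twist type are chosen, $M$ is determined. The combinatorial heart of the argument is to check that this reconstruction, weighted by the MON $b$‑weight and by the factor $(1+b)^{-cc}$, reproduces $\tfrac{2u}{n}D_b f_n$. Expanding $2D_b$ via~\eqref{eq:Laplace-Beltrami} into its three summands, the three families of re‑additions match them: an edge merging two faces of degrees $m$ and $n$ into a single face of degree $m+n$ carries effective weight $1+b$ --- either because the untwisted version (weight $1$) and the twisted version (weight $b$) both occur when the two faces lie in the same component, or because the weight is $1$ while $cc$ drops by $1$ (boosting the coefficient by $1+b$) when they lie in different components --- matching $(1+b)\sum_{m,n}p_{m+n}p_m^*p_n^*$; an untwisted diagonal splitting a face of degree $m+n$ into faces of degrees $m$ and $n$ has weight $1$, matching $\sum_{m,n}p_np_mp_{n+m}^*$; and a twisted diagonal inside a face of degree $n$ leaves the degree profile unchanged, has weight $b$, and admits $n-1$ choices for $c_a$, matching $b\sum_n(n-1)p_np_n^*$.

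The delicate point, and the one I expect to cost the most work, is the bookkeeping that turns the \emph{local} operation at $v_n$ into the \emph{global} operator $2D_b$ with the right numerical coefficients. The normalization $p_i^*=i\,\partial/\partial p_i$ is designed so that $p_i^*$ applied to the $p$‑variable of a degree‑$i$ face produces the number $i$ of its active corners, i.e. its attaching positions; but in the canonical decomposition the corner at $v_n$ is pinned, so for a single map one only sees ``$\deg(\text{other face})$'' rather than the product of the two face degrees. Averaging over the $n!$ vertex labellings of a given unlabelled structure --- on which $v_n$ lands on a prescribed face with probability $\deg(\text{that face})/n$ --- supplies precisely the missing degree factor and the prefactor $\tfrac1n$, yielding $\tfrac{2u}{n}D_b f_n$ and hence the recursion; this gives $F=\tilde\tau_b^{Z}$. (Alternatively, the proposition can be deduced by specializing the general constellation model of~\cite[Section~6]{ChapuyDolega2020} to single monotone Hurwitz numbers; the route above is more self‑contained and directly exhibits the simpler model of this section.)
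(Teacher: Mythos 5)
Your overall strategy (characterize both sides as the unique solution of the evolution equation of \cref{thm:evolution} transported through $t\mapsto -t/u$, $u\mapsto -u$, then verify the equation combinatorially) is sound in outline, and your uniqueness step and the isolated-vertex case are fine. But the combinatorial heart of your argument has a genuine gap, and it is exactly the one the paper's proof is designed to avoid. In your case 2 you delete only the edge of maximal label, so the re-addition is anchored at the \emph{specific} face containing the active corner of the vertex $n$; to recover the global operator $\tfrac{2u}{n}D_b$ you must know that, in the weighted ensemble of monotone Hurwitz maps (weights $u^{\ell}b^{\nu(M)}(1+b)^{-cc(M)}$), the active corner of the maximal-label vertex lies in a face of degree $d$ with probability $d/n$, conditionally on everything else. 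Your justification --- ``averaging over the $n!$ vertex labellings of a given unlabelled structure'' --- does not work: relabelling vertices does not preserve the class of monotone Hurwitz maps (monotonicity $b_1\leq\dots\leq b_r$ and the roles of $a_i$ versus $b_i$ in conditions (c1)--(c2) depend on the labels), and the weights $\nu(M')$ and $cc$ are themselves label-dependent, so there is no symmetry group acting on the ensemble that makes the ``probability $\deg/n$'' claim automatic. At $b=0$ this exchangeability of the top label is precisely the centrality of complete symmetric functions of Jucys--Murphy elements which underlies the proof of \cite{GouldenGuayPaquetNovak2013}, and the paper states explicitly that this property ``is not obvious to extend to the non-orientable (let alone, $b$-deformed) case''; asserting it for general $b$ without proof leaves the argument incomplete (and deducing it a posteriori from the evolution equation would be circular, since that identity is what you are trying to establish combinatorially).

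The paper circumvents this by a different decomposition: it removes (equivalently, adds) the \emph{entire} vertex of maximal label together with all its edges, and proves the catalytic equation \eqref{eq:CDeq61}, in which the face containing the active corner of $n$ is tracked by the auxiliary variables $y_i$; because that face is marked, no degree factor at the anchor is needed and no exchangeability enters, the operator $\Lambda_Y$ recording exactly your three local cases (split, twisted diagonal, merge with the $(1\splus b)$ bookkeeping for components). Your local weight analysis of a single re-added edge agrees with the paper's analysis of one iteration of its Step (ii), so the fix would be either to prove the exchangeability property for general $b$ (a substantial statement in itself) or to switch to the whole-vertex decomposition with catalytic variables as the paper does. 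Your parenthetical fallback of ``specializing the constellation model of \cite{ChapuyDolega2020}'' also does not settle the statement as written: that specialization yields a different combinatorial model, and identifying it with monotone Hurwitz maps would still require an argument.
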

This combinatorial interpretation remains valid at the level of
connected objects. Namely, the function $(1+b) \ln
\tilde\tau_{b}^{Z}(t;\pp,u)$ has an expansion similar to
\eqref{eq:partitionFunction}, where the sum in the RHS is restricted
to connected maps, and without the weight
$\frac{1}{(1+b)^{cc(M)}}$. Moreover, the function  $(1+b) \tfrac{td}{dt} \ln \tilde\tau_{b}^{Z}(t;\pp,u)$ has coefficients in $\mathbb{N}[b]$. This follows from the evolution equation of \cref{thm:evolution} written for $(1+b) \tfrac{td}{dt} \ln \tilde\tau_{b}^{Z}(t;\pp,u)$, by induction on the order of $u$.
This was shown already in~\cite{ChapuyDolega2020} in larger generality but with a different combinatorial interpretation. The coefficients of this function can naturally be called the $b$-deformed non-oriented monotone single Hurwitz numbers.
\begin{figure}
	\centering
	\includegraphics[width=\linewidth]{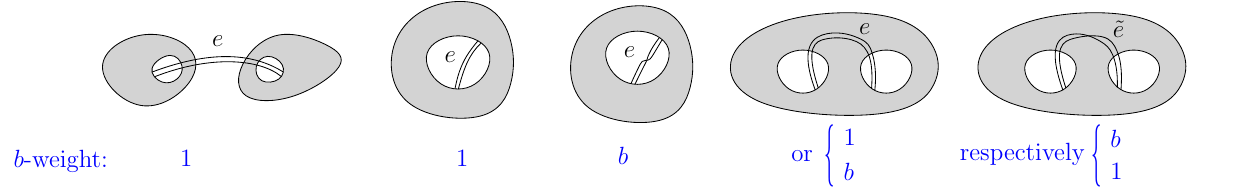}
	\caption{How the $b$-weight is computed. In the last two cases, the weights of $1,b$ or $b,1$ are decided arbitrarily in general, but if one of the two pictures is orientable, then it must get the weight $1$.}\label{fig:MON}.
\end{figure}

\begin{remark}\label{rem:scalings}
	The reader may wonder why we chose to work with the function $\tau_{b}^{Z}(t;\pp,u)$ since the rescaled function $\tilde\tau_{b}^{Z}(t;\pp,u)$ has nonnegative coefficients and a more natural combinatorial interpretation. This (debatable) choice may become clearer in the next section where $u$ will play the role of an inverse dimension parameter.
\end{remark}

\begin{proof}
	The proof of \cite{GouldenGuayPaquetNovak2013} in the orientable case, which interprets the evolution equation~\eqref{eq:evolution} (for $b=0$) as the deletion of the edge $e_r$, relies on a centrality property of symmetric functions of Jucys-Murphy elements which is not obvious to extend to the non-orientable (let alone, $b$-deformed) case. 
	We thus need to use a different approach in which the full vertex of maximal label is removed, rather than a single edge. To do this, we use~\cite[Eq.~(61)]{ChapuyDolega2020} which reads, specialized to our case
\begin{align}\label{eq:CDeq61}
 \frac{d}{dt} 
	\tilde \tau = \Theta_Y Y_+ \frac{1}{1-u\Lambda_Y} \frac{y_0}{1+b} \tilde 
\tau,
\end{align}
where $\Theta_Y$ and $Y_+$ are the operators that substitute $y_i$ to $p_i$ and to $y_{i+1}$, respectively, and where
$$
\Lambda_Y := (1+b)\sum_{i,j\geq 1}y_{i+j-1}\frac{i\partial^2}{\partial
  p_i \partial y_{j-1}} +\sum_{i,j\geq
  1}y_{i-1}p_j\frac{\partial}{\partial y_{i+j-1}}
+b\cdot\sum_{i\geq
  0}y_{i}\frac{i\partial}{\partial y_i }.
$$

	In the rest of the proof we use the notation $\tilde \tau$ for the RHS of~\eqref{eq:partitionFunction} and we will prove~\eqref{eq:CDeq61}. This will be enough to conclude since this equation characterizes coefficients of $\tilde\tau$, inductively.

	We assume that~\eqref{eq:CDeq61} holds up to order
        $t^{n-1}$. Every monotone Hurwitz map with $n$ vertices can be
        constructed from one of size $n-1$ by adding the vertex $n$
        and all edges incident to it, with increasing edge labels. We now
        analyse this construction at the level of generating
        functions; during this analysis we use the variables $y_i$
        rather than $p_i$ to mark the degree of the face containing
        the active corner of $n$, and we do not count the contribution of this corner
        to the face degree (this is similar to~\cite{ChapuyDolega2020}).
The process of adding the vertex $n$ goes as follows:

\begin{itemize}[itemsep=0pt, topsep=0pt,parsep=0pt, leftmargin=24pt]
	\item[(i)] We create the isolated vertex $n$. The contribution to the generating function is $y_0$ (since the active corner $n$ is unique in its face but we do not count it by convention) times $1/(1+b)$ (since a new connected component is created) times $n[t^{n-1}]\tilde \tau$ (since we choose a map of size $n-1$ but need to take into account the factor $\frac{n!}{(n-1)!}$ coming from generating functions). 
	\item[(ii)] We add a certain number (say $k\geq0$) of edges, in increasing label, to the vertex $n$. The contribution of this step is analyzed below.
	\item[(iii)] We finally account for the contribution of the active corner of label $n$ in its face degree (operator $Y_+$) and we restore the $p$-variable weighting for that face (operator $\Theta_Y$).
\end{itemize}
		We now analyse the contribution of each iteration in Step (ii). Assume that the current map has $i-1$ edges, so we want to add the edge $e_i$ to it. By property (c1), we need to attach this edge to the vertex $n$ just before the arrow materializing the active corner (in local orientation). Now, there are several ways to attach the other end of the edge $e_i$:

\begin{itemize}[itemsep=0pt, topsep=0pt,parsep=0pt, leftmargin=36pt]

	\item we attach to a corner in the same face, by splitting it into two faces. By property (c2), for each active corner in the face there is a unique way to attach $e_i$ to it in this way, thus splitting the degree of the face into two parts. The contribution to the generating funtion is thus $\sum_{i,j\geq
  1}y_{i-1}p_j\frac{\partial}{\partial y_{i+j-1}}$.

	\item we attach to a corner in the same face, twisting the edge in order to maintain the same number of faces. Again by property (c2), for each active corner in the face there is a unique way to attach $e_i$ in this way; the face degree is unchanged and a weight $b$ is collected in the operation; so the corresponding operator is $b\cdot\sum_{i\geq
  0}y_{i}\frac{i\partial}{\partial y_i }$.

	\item we attach to a corner in a different face, thus merging that face and the current face together and adding degrees. Each choice of an active corner in another face of the map a priori gives two ways to add such an edge. Indeed, we can connect $e_i$ to either side of the arrow corresponding to the active corner chosen, and for each choice there is a unique way to twist the edge $e_i$ so that (c2) holds (Figure~\ref{fig:hurwitzMap}-Right). These two choices have the same impact on face degrees but they correspond to contributions of $1$ and $b$ to the $b$-weight. However, this analysis has to be corrected when the chosen corner belongs to a different connected component than $n$. In that case, only one of the two choices is valid by (c3), but since the number of connected components decreases by one the factor $\frac{1}{(1+b)^{cc(M)}}$ is multiplied by $(1+b)$. Therefore the two cases (same component or not) give a similar contribution and overall they are taken into account by the operator
$(1+b)\sum_{i,j\geq 1}y_{i+j-1}\frac{i\partial^2}{\partial p_i \partial y_{j-1}}$.
\end{itemize}
Overall, we see that the contribution of all cases is precisely given by the operator $\Lambda_Y$. The total contribution of Step (iii) is therefore given by the operator $\sum_{k\geq0} (u\Lambda_Y)^k=\frac{1}{1-u\Lambda_Y}$, and this concludes the proof.
\end{proof}

\begin{remark}\label{rem:classicalHurwitz}
	The notion of Hurwitz maps that we introduced here is of independent interest, even without the notion of monotonicity. The (non-necessarily monotone) Hurwitz maps give a combinatorial model for the $b$-deformed tau function of classical single Hurwitz numbers, i.e. the function $\tau_{b}^{G}(t;\pp,\qq=(\delta_{i,1}),u)$ with $G(z)=e^z$ introduced in~\cite{ChapuyDolega2020}. To see this, it is better to work directly with the cut-and-join equation of this model~\cite[Eq~(67)]{ChapuyDolega2020} (with $q_i=\delta_{i,1}$ and $\hbar=u$ in the notation of this paper). Indeed it can be interpreted as describing the deletion of the edge of largest label in a Hurwitz map, similarly as what we did here. We leave details to the reader.
      \end{remark}

We conclude this section by noting that the coefficients of the function $\tau_{b}^{Z}(t;\pp,u)$, being a special case of the weighted-Hurwitz numbers of~\cite[Section 6]{ChapuyDolega2020}, have an interpretation as counting (with weights) certain generalized branched coverings of the sphere by surfaces (orientable or not). It is natural to suspect that, in the same way as we were able in this section to replace in our special case the general model of constellations of~\cite{ChapuyDolega2020} by a much simpler model (monotone Hurwitz maps), it is possible to obtain an explicit model of generalized  branched coverings counted by $b$-monotone Hurwitz numbers which would be simpler than what results from directly specializing the definitions of~\cite{ChapuyDolega2020}. However, this would lead us too far from our main subject and we will not adress this question in this paper. 

\section{Schur expansion for $b=1$ and orthogonal group characters}
\label{sec:schurExpansion}

In this and the next sections, we consider the case $b=1$ for which Jack polynomials coincide with the \emph{zonal polynomials}, 
$$
Z_\lambda(\pp) := J_\lambda ^{(b)}(\pp) \Big|_{b=1}.
$$
They are zonal spherical functions for the Gelfand pair
$(\Gl(N),O(N))$, which explains the terminology (see~\cite[Chap.~7]{Macdonald1995}).

The purpose of this section is to show that the function
$\tau_{b=1}^{Z}(t;\pp,u)$ defined in~\eqref{eq:defmain} by its
expansion in zonal polynomials, has in fact an explicit expansion in
Schur functions, provided we scale the variables $\pp$ by a factor of
$2$. Moreover, this explicit expansion is directly related to
irreducible representations of orthogonal groups.  See
Theorem~\ref{thm:schurExpansion} and Theorem~\ref{thm:OliveiraNovaes}
which establishes the closely related conjecture of Oliveira and Novaes.

\subsection{Irreducible representations of orthogonal and special
  orthogonal groups.}
The representation theory of orthogonal and special orthogonal
 groups was developed in the pionering work of Weyl \cite{Weyl1939}, see e.g. \cite{FultonHarris1991} for an introduction. The highest weight irreducible characters $o_\lambda$ of the orthogonal group
  $O(2n)$ are indexed by partitions $\lambda$ with $\ell(\lambda) \leq  n$. For $\lambda_n=0$ they coincide with the irreducible characters
  $so_\lambda$ of the special orthogonal group
  $SO(2n)$, and when $\lambda_n \neq 0$ the restriction of $o_\lambda$
  to $SO(2n)$ splits into a direct sum
  $so_{(\lambda_1,\dots,\lambda_n)}+so_{(\lambda_1,\dots,-\lambda_n)}$. The
  dimension of the irreducible representation of $SO(2n)$ of the
  highest weight $\lambda$ is given by the following 
  formula due to Weyl (see \cite{Weyl1939,FultonHarris1991}), which is a
  consequence of the Weyl character formula:
       \begin{align}
        \label{eq:dimWeyl}
        so_\lambda(1^{2n}) &=
                                                           \prod_{1
							   \leq i < j
                             \leq
                             n}\frac{(\rho_i-\rho_j)(\rho_i+\rho_j+2n)}{(-i+j)(-i-j+2n)},      \end{align}
                         where $\rho_i := \lambda_i-i$.
   In particular        \begin{align}
        \label{eq:dimOrthogonal}
        o_\lambda(1^{2n}) &= \begin{cases} so_\lambda(1^{2n}) &\text{
            for } \ell(\lambda) < n,\\ 2so_\lambda(1^{2n}) &\text{
            for } \ell(\lambda) = n.\end{cases}\end{align}
 A different formula for the dimension of the irreducible
representation of the orthogonal group $O(n)$ of the highest
weight $\lambda$, valid regardless of the parity of $n$, was given by El Samra and King \cite{ElSamraKing1979}. They proved that
	   \begin{align}
        \label{eq:dimWeyl2}
        o_\lambda(1^{n}) &= \frac{1}{\hook_\lambda}
                                                           \prod_{\substack{(x,y)
                           \in \lambda\colon\\ x \leq y}}(n+\lambda_x+\lambda_y-x-y) \prod_{\substack{(x,y)
                           \in \lambda\colon\\ x >
         y}}(n-\lambda^t_x-\lambda^t_y+x+y-2),    \end{align}
where $\hook_\lambda$ is the \emph{hook-product} of the partition $\lambda$, which can be defined in previously introduced Jack-theoretic notation by
      $$\hook_\lambda := \hook_{b=0}(\lambda) =\hook_{b=0}'(\lambda) .
      $$
The  quantity $\hook_\lambda$ has a well known representation theoretic
      interpretation, namely $f^\lambda:=|\lambda|!/\hook_\lambda$ is the
      dimension of the irreducible representation of the symmetric
      group indexed by $\lambda$, see e.g.~\cite{Stanley:EC2,
	   FultonHarris1991}. %

	   From \eqref{eq:dimWeyl2}, the quantity $o_\lambda(1^{n}) \in \mathbb{Q}[n]$ can be considered
   as a
   polynomial in $n$ of degree $|\lambda|$, which allows us to extend
   the definition of this ``dimension'' to the case where $n$ is a
   formal variable. Below we will use this convention with
   $n=u^{-1}$, so that $o_\lambda(1^{u^{-1}})\in \mathbb{Q}[u^{-1}]$
   and therefore
$1/o_\lambda(1^{u^{-1}})$ has a valid power series expansion at $u=0$, i.e. $1/o_\lambda(1^{u^{-1}}) \in \mathbb{Q}[[u]]$.

\subsection{Schur function expansion}

In what follows, we will sometimes need to rescale the variables of Schur functions by a factor of~$2$, and we will use the notation
$$
\mathbf{p/2} := (p_1/2, p_2/2, p_3/2, \dots) 
\ \ , \  \ 
\mathbf{2 p} := (2 p_1, 2p_2, 2p_3, \dots). 
$$

\begin{theorem}[Explicit expansion of $\tau_{b=1}^{Z}$ in scaled Schur functions]\label{thm:schurExpansion}
	The function  $\tau_{b=1}^{Z}(t;\pp,u)$ defined by its expansion~\eqref{eq:defmain}, has the following expansion in Schur functions of the variables~$\mathbf{p/2}$,
	\begin{align}\label{eq:schurExpansion}
		\tau_{b=1}^{Z}(t;\pp,u) =\sum_{n\geq 0} t^n \sum_{\lambda}\frac{s_\lambda(\mathbf{p/2})}{\hook_\lambda^2\cdot
		o_\lambda(1^{u^{-1}})}.
	\end{align}
	This identity holds in $\mathbb{Q}(u)[\pp][[t]]$ and in $\mathbb{Q}[\pp][[u,t]]$.
      \end{theorem}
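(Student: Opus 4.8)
The plan is to prove \eqref{eq:schurExpansion} by showing that the right-hand side satisfies an evolution equation of the same shape as Theorem~\ref{thm:evolution}, so that uniqueness pins it down. Concretely, set $b=1$ in \eqref{eq:evolution}: the function $\tau_{b=1}^{Z}$ is the unique solution in $\QQ(u)[\pp][[t]]$ of $\tfrac{td}{dt}\tau = E_{1}\tau$ with $\tau|_{t=0}=1$, where $E_1 = u\bigl(\tfrac{t p_1}{2} - D_1^{\mathrm{op}}\bigr)$ and $D_1^{\mathrm{op}}$ is the operator $2\sum p_{m+n}p_m^*p_n^* + \sum p_n p_m p^*_{n+m} + \sum (n-1)p_n p_n^*$. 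So it suffices to verify that $\Phi(t;\pp,u):=\sum_{n}t^n\sum_{\lambda\vdash n} s_\lambda(\pp/2)\big/(\hook_\lambda^2\, o_\lambda(1^{u^{-1}}))$ obeys $\tfrac{td}{dt}\Phi = E_1\Phi$ and $\Phi|_{t=0}=1$.

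First I would rewrite $E_1$ in terms of the power sums rescaled by $2$. Since $s_\lambda(\pp/2)$ is most naturally handled in the variables $\tilde p_i = p_i/2$, I would express $p_i^* = i\partial/\partial p_i = \tfrac12\,\tilde p_i^*$ and $p_i = 2\tilde p_i$, and substitute. The quadratic-in-$p^*$ term $2\sum p_{m+n}p_m^*p_n^*$ becomes (after the factor bookkeeping) $\tfrac12\sum \tilde p_{m+n}\tilde p_m^*\tilde p_n^*$; the term $\sum p_n p_m p^*_{n+m}$ becomes $2\sum \tilde p_n \tilde p_m \tilde p^*_{n+m}$; the diagonal term $\sum(n-1)p_n p_n^*$ becomes $\sum(n-1)\tilde p_n\tilde p_n^*$; and $\tfrac{t p_1}{2}$ becomes $t\tilde p_1$. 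The key point is that the combination $\mathcal{D}:=\tfrac12\sum \tilde p_{m+n}\tilde p_m^*\tilde p_n^* + 2\sum \tilde p_n\tilde p_m \tilde p^*_{n+m} + \sum(n-1)\tilde p_n\tilde p_n^*$ acts diagonally on Schur functions. Indeed the cut-and-join operator $\sum \tilde p_{m+n}\tilde p_m^*\tilde p_n^* + \sum \tilde p_n \tilde p_m \tilde p^*_{n+m}$ has $s_\lambda$ as an eigenfunction with eigenvalue $\sum_{\square\in\lambda} c(\square)$ (ordinary contents, $c(x,y)=y-x$ in this paper's convention up to sign) — this is the classical Schur cut-and-join. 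The extra pieces $\tfrac12(\cdots) + (\cdots) + \sum(n-1)\tilde p_n\tilde p_n^*$ are likewise diagonal on the Schur basis: $\sum(n-1)\tilde p_n\tilde p_n^*$ has eigenvalue $\sum_i(i-1)$-type content sums, and all told $\mathcal{D}\,s_\lambda = d_\lambda\, s_\lambda$ for an explicit $d_\lambda$. Tracking the constants carefully, $d_\lambda$ should come out to $\sum_{\square\in\lambda}\bigl(c_{b=1}(\square)+\tfrac12 c_{b=1}(\square)\cdot(\text{correction})\bigr)$; the cleanest way to identify it is to note that by \eqref{eq:dimWeyl2} the ratio $o_{\lambda}(1^{n-1})\big/o_\lambda(1^n)$ and the action on dimensions forces $\mathcal D$ to be exactly the operator whose eigenvalue is $\tfrac12\bigl(n\cdot|\lambda| - (\text{log-derivative of }o_\lambda(1^n))\big)$ evaluated suitably — i.e. I would reverse-engineer $d_\lambda$ from the El~Samra–King product so that the Pieri step below closes.

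Next I would handle the ``creation'' term $t\tilde p_1 \Phi$. Multiplication by $\tilde p_1 = p_1/2$ on $s_\lambda(\pp/2)$ is governed by the Pieri rule $p_1 s_\lambda = \sum_{\lambda\nearrow\mu} s_\mu$, hence $\tilde p_1 s_\lambda(\pp/2) = \tfrac12\sum_{\lambda\nearrow\mu}s_\mu(\pp/2)$. Extracting $[t^{n+1}]$ from both sides of the proposed equation, the recursion becomes: for each $\mu\vdash n+1$,
\begin{equation*}
\frac{n+1}{\hook_\mu^2\, o_\mu(1^{u^{-1}})} \;=\; u\!\left(\frac{1}{2}\sum_{\lambda\nearrow\mu}\frac{1}{\hook_\lambda^2\, o_\lambda(1^{u^{-1}})}\;-\;\frac{d_\mu}{\hook_\mu^2\, o_\mu(1^{u^{-1}})}\right).
\end{equation*}
Multiplying through by $\hook_\mu^2\, o_\mu(1^{u^{-1}})$ and using $\hook_\lambda\big/\hook_\mu$ = (ratio of hook products under adding the box $\mu\setminus\lambda$) together with $f^\lambda = |\lambda|!/\hook_\lambda$ so that $\sum_{\lambda\nearrow\mu}(f^\lambda)^2 \cdot(\text{stuff})$ telescopes, the identity reduces to a single polynomial identity in $n=u^{-1}$:
\begin{equation*}
(n+1)\;=\;\frac12\sum_{\lambda\nearrow\mu}\frac{\hook_\mu^2\,o_\mu(1^n)}{\hook_\lambda^2\,o_\lambda(1^n)}\cdot\frac{1}{n}\;-\;d_\mu \cdot\frac1n,
\end{equation*}
wait — more precisely, after clearing $u$, it is the statement that $o_\mu(1^n)\big/o_\lambda(1^n)$ summed over $\lambda\nearrow\mu$ with the hook weights reproduces $2(n+1)n^{-1}\cdot(\dots)$; the cleanest route is the branching rule for orthogonal characters restricted from $O(n)$ to $O(n-1)$, which gives $o_\mu(1^n) = \sum_{\mu\succ\lambda} o_\lambda(1^{n-1})$ over interlacing $\lambda$, combined with the dimension recursion implicit in \eqref{eq:dimWeyl2}. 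The main obstacle, and where the real work lies, is precisely this last step: establishing the exact identity relating $\sum_{\lambda\nearrow\mu}\hook_\mu^2 o_\mu(1^n)/(\hook_\lambda^2 o_\lambda(1^n))$ to $(n+1) + d_\mu$ as polynomials in $n$. I expect to prove it either (a) by a direct manipulation of the El~Samra–King product formula \eqref{eq:dimWeyl2} — comparing the product over boxes of $\mu$ to that over boxes of $\lambda$ when $\lambda\nearrow\mu$, which changes only the factors in the row and column of the added box — reducing everything to a rational-function identity in the $\lambda_i$; or (b) by the branching-rule argument above, which is cleaner conceptually but requires care because $o$-branching involves half-integer interlacing and both directions of box addition/removal. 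Either way, once $d_\mu$ is correctly identified from $D_1$ and the recursion is checked, uniqueness from Theorem~\ref{thm:evolution} (at $b=1$) finishes the proof; the statement over $\QQ[\pp][[u,t]]$ follows since, as remarked after \eqref{eq:dimWeyl2}, $1/o_\lambda(1^{u^{-1}}) \in \QQ[[u]]$ and all coefficients are then manifestly power series in $u$.
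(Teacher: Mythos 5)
Your reduction rests on the claim that, after the substitution $\tilde p_i = p_i/2$, the non-$p_1$ part of $E_1$ (i.e.\ $2D_1$ rewritten in the variables $\tilde{\pp}$) acts diagonally on the Schur basis $s_\lambda(\tilde{\pp})$. This is false, and it is the crux of the matter. The rescaling gives the operator $A=\sum_{m,n}\tilde p_{m+n}\tilde p_m^*\tilde p_n^* + 2\sum_{m,n}\tilde p_m\tilde p_n\tilde p_{m+n}^* + \sum_n (n-1)\tilde p_n\tilde p_n^*$ (the first coefficient is $1$, not $\tfrac12$ as you wrote, but that is a side issue): the ``cut'' and ``join'' parts carry different coefficients ($1$ versus $2$), and the extra term $\sum_n(n-1)\tilde p_n\tilde p_n^*$ is diagonal on power sums, not on Schur functions. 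Concretely, in degree $2$ one computes $A\,s_2(\tilde{\pp}) = 4\, s_2(\tilde{\pp})$ but $A\,s_{11}(\tilde{\pp}) = -2\, s_2(\tilde{\pp}) - 2\, s_{11}(\tilde{\pp})$, so $s_{11}$ is not an eigenvector. (This is as it must be: $D_1$ is diagonal on zonal polynomials, and if it were simultaneously diagonal on scaled Schur functions the theorem would be almost content-free.) Consequently the coefficient extraction does not yield your recursion $(n+1)a_\mu = u\bigl(\tfrac12\sum_{\lambda\nearrow\mu}a_\lambda - d_\mu a_\mu\bigr)$: it mixes all coefficients $a_\nu$ with $\nu\vdash n+1$ through the off-diagonal entries of $A$, and there is no eigenvalue $d_\lambda$ to ``reverse-engineer''. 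Your fallback via the branching rule $O(n)\downarrow O(n-1)$ also does not address this, since branching changes the rank $n$ rather than comparing box-additions $\lambda\nearrow\mu$ at fixed $n$.

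The paper's proof shares your high-level strategy (verify a differential characterization of the right-hand side and invoke uniqueness of the solution of the evolution equation), but it verifies each Virasoro constraint $L^Z_r\big|_{b=1}$ rather than the resummed evolution equation, precisely because the action of these operators on scaled Schur functions can be computed exactly: via the boson-fermion correspondence (Lemma~\ref{lemma:virasoroBF}, used in Lemma~\ref{lemma:virasoroDelayed}), $L^Z_r\big|_{b=1}$ sends $s^{(\rho)}(\mathbf{p/2})$ to $-\tfrac t2\delta_{r,1}\,s^{(\rho)}(\mathbf{p/2}) + \sum_i (n+\rho_i)\, s^{(\rho)-r\epsilon_i}(\mathbf{p/2})$ — a lowering action, not a diagonal one. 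This reduces the theorem to the identity $\sum_i(n+\rho_i+r)\,a_{\lambda+r\epsilon_i}(n)/a_\lambda(n) = \tfrac12\delta_{r,1}$, which is the real work and is established in the paper not by a branching rule or a direct manipulation of the El~Samra--King formula, but by viewing the sum as a rational function of the $\rho_i$, checking that all residues (including at infinity) vanish or reproduce the same quantity at $k-1$ variables, and applying Liouville's theorem together with an induction and an explicit evaluation of the base case. So the missing ingredients in your proposal are (i) a correct description of how the relevant operators act on $s_\lambda(\mathbf{p/2})$, and (ii) the resulting identity, which is substantially harder than the box-addition recursion you wrote down.
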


The proof of Theorem~\ref{thm:schurExpansion} consists in showing that the RHS of~\eqref{eq:schurExpansion} satisfies the evolution equation~\eqref{eq:evolution}. In fact, we will directly check each Virasoro constraint since this is not more difficult.
For a partition $\lambda$ and an integer $k$ such that $\ell(\lambda) \leq k$, we define the rational function
$$a_\lambda(n) := \frac{1}{\hook_\lambda^2\cdot
	o_\lambda(1^{2n})} \in \mathbb{Q}(n).
$$
The next lemma tells us how to evaluate this rational function on integers. Here and later we use the notation $\rho_i:=\lambda_i-i$, with the convention $\lambda_i:=0$ if $i>\ell(\lambda)$.
\begin{lemma}
	 If $n > k \geq \ell(\lambda)$, and if $n= k = \ell(\lambda)$, we have 
	\begin{align}
          \label{eq:a_la}
	  a_\lambda(n) &= \prod_{1 \leq i < j \leq
            k}\frac{\rho_i-\rho_j}{2n+\rho_i+\rho_j}\prod_{i=1}^k\frac{(2n-2i)!}{2(\rho_i+k)!(\rho_i
                         +2n-k-1)!(\rho_i+n)}.
        \end{align}
         If $n > \ell(\lambda)$, we have
	 \begin{align}
          \label{eq:a_la'}
          a_\lambda(n) &= 2\prod_{1 \leq i < j \leq
                         n}\frac{\rho_i-\rho_j}{2n+\rho_i+\rho_j}\prod_{i=1}^n\frac{(2n-2i)!}{2(\rho_i+n)!^2}.
                                 \end{align}        
\end{lemma}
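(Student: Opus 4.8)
The plan is to unwind the definitions and reduce everything to known product formulas for the relevant ``dimensions''. Recall that $a_\lambda(n) = \frac{1}{\hook_\lambda^2 \, o_\lambda(1^{2n})}$, so the task is to turn the two expressions for $o_\lambda(1^{2n})$ in the excerpt---namely the Weyl-type formula \eqref{eq:dimWeyl}--\eqref{eq:dimOrthogonal} valid when $n$ is large relative to $\ell(\lambda)$, and the El~Samra--King formula \eqref{eq:dimWeyl2}---together with the hook-length product $\hook_\lambda$, into the stated products over $i<j$ and over $i$. So the proof naturally splits into two parts, one for each displayed identity.

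First I would prove \eqref{eq:a_la'}, which should be the cleaner of the two. When $n > \ell(\lambda)$ we have $\ell(\lambda) < n$, so by \eqref{eq:dimOrthogonal} $o_\lambda(1^{2n}) = so_\lambda(1^{2n})$, and \eqref{eq:dimWeyl} (with the ambient rank equal to $n$, indices running $1 \le i < j \le n$, and $\rho_i = \lambda_i - i$, where $\lambda_i = 0$ for $i > \ell(\lambda)$) gives
\[
 so_\lambda(1^{2n}) = \prod_{1 \le i < j \le n} \frac{(\rho_i - \rho_j)(\rho_i + \rho_j + 2n)}{(j-i)(2n - i - j)}.
\]
Then I need a matching product expression for $\hook_\lambda^2$ with the denominator here. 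The key classical fact is the ``first type'' hook-content/Weyl-denominator identity
\[
 \hook_\lambda = \frac{\prod_{i=1}^{n} \rho_i! \cdot \ (\text{no})}{\prod_{1\le i<j\le n}(\rho_i - \rho_j)}\,,
\]
more precisely $\hook_\lambda = \dfrac{\prod_{i=1}^{n}(\lambda_i + n - i)!}{\prod_{1 \le i < j \le n}(\lambda_i - \lambda_j - i + j)}$ for any $n \ge \ell(\lambda)$; writing $\rho_i + n = \lambda_i + n - i$ and $\rho_i - \rho_j = \lambda_i - \lambda_j - i + j$, this is exactly $\hook_\lambda = \prod_i (\rho_i + n)! \,/\, \prod_{i<j}(\rho_i - \rho_j)$. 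Squaring, dividing $1$ by $\hook_\lambda^2 \, so_\lambda(1^{2n})$, and simplifying the $(\rho_i-\rho_j)$ factors (they appear with net power $2 - 1 = 1$ in the denominator, matching the single $(\rho_i-\rho_j)$ in the numerator of \eqref{eq:a_la'}), I expect the $(\rho_i+\rho_j+2n)$ factors and the $(2n-i-j)$ and $(j-i)$ factors to reorganize into $\prod_{i<j}\frac{\rho_i-\rho_j}{2n+\rho_i+\rho_j}$ times $\prod_i \frac{1}{(\rho_i+n)!^2}$ times a leftover factor that must equal $2\prod_i \frac{(2n-2i)!}{2}$. Verifying that the leftover double product over $i < j$ of $(j-i)(2n-i-j)$ collapses to $\prod_i (2n-2i)!$ (after combining with suitable rising/falling factorials) is a finite rearrangement: $\prod_{1\le i<j\le n}(j-i) = \prod_{i=1}^{n}(n-i)!$ and $\prod_{1\le i<j\le n}(2n-i-j) = \prod_{i=1}^n \frac{(2n-2i)!}{(n-i)!}$ (by pairing $j$ ranging over $i+1,\dots,n$ the second product telescopes), whose product is $\prod_i (2n-2i)!$. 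The stray factor of $2$ in \eqref{eq:a_la'} comes from a careful accounting of the $i = j$ ``diagonal'' term $\rho_i + \rho_i + 2n = 2(\rho_i + n)$ that is implicitly absorbed; I will track it by comparing with the $n = k = \ell(\lambda)$ boundary case.

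Next I would prove \eqref{eq:a_la}, which is the harder case because here $k$ is a free rank parameter with $\ell(\lambda) \le k < n$ (or $n = k = \ell(\lambda)$), and the product over $i$ now goes only up to $k$, not $n$. Here I cannot use \eqref{eq:dimWeyl} at rank $n$ directly in the same naive way; instead I would either (a) apply \eqref{eq:dimWeyl} at ambient rank $n$ but then observe that the factors for indices $i$ or $j$ in the range $(k, n]$ can be explicitly evaluated (since $\rho_i = -i$ there) and pulled out as a ratio of factorials that rebuilds the asymmetry between ``$\le k$'' and ``$\le n$'', or (b) use the El~Samra--King formula \eqref{eq:dimWeyl2} with $n \rightsquigarrow 2n$, splitting the product over boxes $(x,y)$ with $x \le y$ versus $x > y$ and converting the hook-product $\hook_\lambda$ in that formula plus the extra $\hook_\lambda^2$ from $a_\lambda$ into the stated $i$-indexed product. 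Approach (a) seems more robust: the claim is that \eqref{eq:a_la} and \eqref{eq:a_la'} are two different-looking true formulas for the \emph{same} rational function in $n$ (both equal $a_\lambda(n)$ whenever $n$ is large), so once \eqref{eq:a_la'} is established it suffices to check that the RHS of \eqref{eq:a_la} equals the RHS of \eqref{eq:a_la'} as rational functions in $n$ for each fixed $\lambda$ and each admissible $k \ge \ell(\lambda)$; this is a finite identity between products of Gamma/factorial functions, provable by splitting off the ``trivial'' indices $i \in (k, n]$ where $\rho_i = -i$ and checking the telescoping. The main obstacle I anticipate is exactly this bookkeeping: getting the $k$-versus-$n$ truncation and the lone factor $2$ right, and making sure the formulas are interpreted correctly at the degenerate boundary $n = k = \ell(\lambda)$ where the hypothesis ``$n > k$'' fails (the statement explicitly includes this case, presumably because that is the value actually needed later, and the rational-function identity still holds there by continuity). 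Everything else is standard manipulation of the Weyl dimension formula and the hook-length formula, so I would organize the write-up as: (1) recall the hook product identity $\hook_\lambda = \prod_i(\rho_i+n)!/\prod_{i<j}(\rho_i-\rho_j)$; (2) derive \eqref{eq:a_la'} from \eqref{eq:dimWeyl}; (3) derive \eqref{eq:a_la} by isolating the trivial indices and reducing to \eqref{eq:a_la'}; (4) handle the boundary case $n=k=\ell(\lambda)$ separately, either by direct substitution or by the continuity of rational functions.
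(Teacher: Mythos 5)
Your plan is essentially the paper's proof, just run in the opposite order. The paper first derives \eqref{eq:a_la} by splitting the Weyl product \eqref{eq:dimWeyl} at the index $k$ — exactly your option (a): the factors involving an index in $(k,n]$, where $\rho_i=-i$, are evaluated as ratios of factorials — then handles $n=k=\ell(\lambda)$ by checking that substituting $k=n$ into the $k<n$ formula produces precisely the extra factor $2$ demanded by \eqref{eq:dimOrthogonal}, and finally obtains \eqref{eq:a_la'} from \eqref{eq:a_la} by setting $k=n-1$; your order (prove \eqref{eq:a_la'} directly, then reduce \eqref{eq:a_la} to it) and your rational-function continuation at the boundary are equally workable, since both sides are rational in $n$ for fixed $k$ and the El Samra--King polynomial agrees with the true dimension at $n=\ell(\lambda)$. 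One arithmetic slip to fix in the write-up: $\prod_{1\le i<j\le n}(2n-i-j)=\prod_{i=1}^{n-1}\frac{(2n-2i-1)!}{(n-i-1)!}$, so combined with $\prod_{1\le i<j\le n}(j-i)=\prod_{i=1}^{n-1}(n-i)!$ the Weyl denominator equals $\prod_{i=1}^{n-1}\frac{(2n-2i)!}{2}$, not $\prod_i(2n-2i)!$ (your claimed value is too large by $2^{n-1}$). With the corrected value the leftover matches your stated target $2\prod_{i=1}^{n}\frac{(2n-2i)!}{2}$ exactly: the leading $2$ in \eqref{eq:a_la'} is nothing but the trivial $i=n$ factor $0!/2$ (recall $\rho_n+n=0$ since $n>\ell(\lambda)$) written inside the product and compensated in front, so there is no ``diagonal term'' to track and no need to calibrate against the $n=k=\ell(\lambda)$ case.
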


\begin{proof}
Let $n>k\geq \ell(\lambda)$ and note that $\rho_i=-i$ if $i>k$. We split the product in \eqref{eq:dimWeyl} as follows,
\begin{equation*}
so_\lambda(1^{2n}) = N_\lambda(k) M_\lambda(n,k) \frac{D(k+1,n)}{D(1,n)}
\end{equation*}
with 
\begin{equation*}
N_\lambda(k) = \prod_{1\leq i<j\leq k} (\rho_i-\rho_j)(\rho_i+\rho_j+2n),\qquad
M_\lambda(n,k) = \prod_{i=1}^k \prod_{j=k+1}^n (\rho_i+j)(\rho_i-j+2n)
\end{equation*}
and for all $1\leq p<n$
\begin{equation*}
D(p,n) = \prod_{p\leq i<j\leq n} (-i+j)(-i-j+2n).
\end{equation*}
The products which do not involve $\rho_i$ and $\rho_j$ can be performed in terms of factorials, yielding
\begin{equation*}
M_\lambda(n,k) = \prod_{i=1}^k (\rho_i+n) \frac{(\rho_i+2n-k-1)!}{(\rho_i+k)!},\qquad
D(p,n) = \prod_{i=p}^{n-1} \frac{(2n-2i)!}{2}.
\end{equation*}
This gives the following expression for $so_\lambda(1^{2n})$,
\begin{equation*}
so_\lambda(1^{2n}) = \prod_{1\leq i<j\leq k} (\rho_i-\rho_j)(\rho_i+\rho_j+2n) \prod_{i=1}^k 2(\rho_i+n) \frac{(\rho_i+2n-k-1)!}{(\rho_i+k)!(2n-2i)!}
\end{equation*}
Notice that we derived this formula assuming $k<n$. In the case
$\ell(\lambda)=n$ and plugging $k=n$ in the formula above we obtain
$2\cdot so_\lambda(1^{2n})$
since
\[ \frac{1}{D(1,n)}=\prod_{1 \leq
   i  \leq n-1}\frac{2}{(2n-2i)!} = \frac{1}{2}\prod_{1 \leq i  \leq
   n}\frac{2(\rho_i+n)(2n-n-1+\rho_i)!}{(\rho_i+n)! (2n-2i)!}\]
for any partition $\lambda$ of length $n$ and it yields the RHS of
\eqref{eq:dimWeyl} multiplied by two.
Therefore by \eqref{eq:dimOrthogonal} we have that
\begin{equation*}
o_\lambda(1^{2n}) = \prod_{1\leq i<j\leq k} (\rho_i-\rho_j)(\rho_i+\rho_j+2n) \prod_{i=1}^k 2(\rho_i+n) \frac{(\rho_i+2n-k-1)!}{(\rho_i+k)!(2n-2i)!}
\end{equation*}
for $n > k$, and $n
        = k = \ell(\lambda)$.
Plugging the hook-product formula (e.g. \cite{Stanley:EC2})
      \[ \hook_\lambda = \prod_{1 \leq i < j
          \leq k}\frac{1}{\rho_i-\rho_j} \prod_{i=1}^k(\rho_i+k)!\]
for all $k\geq \ell(\lambda)$, we obtain~\eqref{eq:a_la}.

Notice now that substituting $k=n-1$ in~\eqref{eq:a_la} we obtain
        \begin{align*}
	  a_\lambda(n) &=\prod_{1 \leq i < j \leq
            n-1}\frac{\rho_i-\rho_j}{2n+\rho_i+\rho_j}\prod_{i=1}^{n-1}\frac{(2n-2i)!}{2(\rho_i
                         +n)!^2}.
        \end{align*}
 It has the same form as \eqref{eq:a_la'}, but the bound on the products is $n-1$ instead of $n$, and there is a missing factor 2 in front. It is easy to see that the bound can be extended to $n$ by compensating by a factor 2. Indeed, since $\rho_n=-n$, one has $\prod_{i=1}^{n-1} \frac{\rho_i-\rho_n}{2n+\rho_i+\rho_n} = 1$, and $\prod_{i=1}^n\frac{(2n-2i)!}{2(\rho_i+n)!^2} = \frac{1}{2} \prod_{i=1}^{n-1}\frac{(2n-2i)!}{2(\rho_i+n)!^2}$.
	\end{proof}

	Using \cref{eq:a_la}, we can (and we do) promote the definition of $a_\lambda(n)$ to  nonnegative integer $k$-tuples $\la \in \Z_{\geq 0}^k$. This is an antisymmetric function of the $k$ parameters $\rho_i=\lambda_i-i$.
In what follows it will also be natural to think of the Schur function $s_\lambda$ as a function of these parameters. For an integer partition $\lambda$ with $\ell(\lambda) \leq k$, we will thus write 
$$s_\lambda = s^{(\rho_1, \dots, \rho_k)}.$$
 We extend this definition antisymmetrically to $k$-tuples $\rho \in \Z^k$. In particular $s^{(\rho_1, \dots, \rho_k)}$ vanishes if two of the $\rho_i$ are equal.

 The following lemma describes the action of Virasoro constraints on scaled Schur functions. Its proof will be more natural after the Boson-Fermion correspondence is introduced, so we postpone the proof to Section~\ref{sec:formalNSolutions}, Lemma~\ref{lemma:virasoroBF}.
\begin{lemma}[Action of the Virasoro operator on scaled Schur functions]\label{lemma:virasoroDelayed}
	Let $r\geq 1$, $u^{-1} = 2n$ and recall the notation 
$L^{Z}_r$ from~\eqref{eq:defLZi}. For any vector $\rho \in
\mathbb{Z}^k$ with $\rho_i\geq -i$ and $\rho_{k-i} = i-k$ for $0 \leq i < r$ (in
particular $k>r)$ one has
\begin{align}
  \label{eq:VirasoroOnSchur}
	\left(L^{Z}_r\Big|_{b=1} \right)
	      s^{(\rho_1, \dots, \rho_k)}(\mathbf{p/2})
	     =
	     -\frac{t}{2}\delta_{r,1} s^{(\rho_1, \dots, \rho_k)}(\mathbf{p/2})
	     +
	     \sum_{i=1}^{k-r} \Bigl(n+\rho_i\Bigr) s^{(\rho_1, \dotsc, \rho_k)-r \epsilon_i}(\mathbf{p/2}).
\end{align}
where $\epsilon_i=(0,\dots,1,\dots 0)$ is the vector with a unique $1$
in $i$-th position.
\end{lemma}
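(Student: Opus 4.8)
The plan is to prove \cref{lemma:virasoroDelayed} by a direct computation once the relevant symmetric-function identities are phrased in the ``$\rho$-coordinate'' language. First I would recall that a Schur function $s_\lambda$ with $\ell(\lambda)\le k$ can be written in the bialternant form $s^{(\rho_1,\dots,\rho_k)}$ with $\rho_i=\lambda_i-i$, antisymmetric in the $\rho_i$'s and vanishing when two coincide; crucially, this identification remains valid after the rescaling $\pp\mapsto\mathbf{p/2}$, since that only rescales the power sums uniformly. The key analytic input will be the effect of the individual pieces of $L^{Z}_r|_{b=1}$ on a single scaled Schur function, and the cleanest route is \emph{not} to manipulate the bialternant directly but to postpone the proof (as the authors announce) to the Boson--Fermion correspondence, where $L^{Z}_r$ becomes a simple quadratic expression in fermionic modes and its action on $s_\lambda$ reduces to ``moving one box'' — i.e. to the combinatorics of removing a border strip of size $r$. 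That is exactly what the shift $s^{(\rho_1,\dots,\rho_k)-r\epsilon_i}$ encodes: decrementing $\rho_i$ by $r$, summed over all legal positions $i$.

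Concretely, the steps I would carry out are as follows. (1) Translate $L^{Z}_r|_{b=1}$, whose explicit form is $\frac{p_r^*}{u}+2\sum_{m+n=r}p_m^*p_n^*+\sum_{n\ge1}p_np_{n+r}^*+(r-1)p_r^*-\tfrac{t}{2(1+b)}\delta_{r,1}$ evaluated at $b=1$ (so the Jack/zonal normalization sits behind the scenes), into a fermionic bilinear via the Boson--Fermion dictionary with the rescaling of variables built in; this is where the factor $2n=u^{-1}$ enters, through the zero-mode/charge bookkeeping. (2) Apply this bilinear to the fermionic state corresponding to $s_\lambda$, namely $\psi_{\rho_1}\psi_{\rho_2}\cdots$ on the appropriate vacuum, and observe that the $p^*$-type terms annihilate a particle and the $p$-type terms create one shifted by $r$; the net effect on $s^{(\rho_1,\dots,\rho_k)}$ is a sum over $i$ of $s^{(\rho_1,\dots,\rho_k)-r\epsilon_i}$ weighted by the coefficient picked up when the relevant mode passes the charge, which is precisely $n+\rho_i$. (3) Track the constant term: the $-\tfrac{t}{2}\delta_{r,1}$ comes from the $p_1$-linear piece $-\tfrac{t\delta_{r,1}}{1+b}$ at $b=1$ together with the $\pp\mapsto\mathbf{p/2}$ rescaling, which produces the factor $\tfrac12$. (4) Finally, invoke the boundary hypotheses $\rho_{k-i}=i-k$ for $0\le i<r$: they guarantee that the ``missing'' contributions from indices $i>k-r$ vanish (decrementing a saturated tail position by $r$ produces a repeated $\rho$, hence a zero Schur function), so the sum may be truncated at $k-r$ exactly as stated.

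The main obstacle, and the reason the authors defer the proof, is getting the normalization constants exactly right — both the overall factor $2$ relating zonal to Schur (the $\hook_\lambda^2$ and $o_\lambda$ weights), and the factor involving $n+\rho_i$ — because these depend delicately on the precise conventions in the Boson--Fermion correspondence (which vacuum, which grading, how the charge $u^{-1}=2n$ is inserted). I expect that roughly half the work is purely bookkeeping: verifying that the commutation of the explicit differential operator $L^{Z}_r|_{b=1}$ with the rescaling operator $\pp\mapsto\mathbf{p/2}$ produces exactly the claimed coefficients, and that the fermionic computation of ``create at $\rho_i-r$, annihilate somewhere'' collapses to a single-index sum rather than a double sum. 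A secondary subtlety is that \cref{eq:a_la} only literally describes $a_\lambda(n)$ for partitions, and one must check that the \emph{antisymmetric} extension to arbitrary $\rho\in\Z^k$ used here is compatible with the antisymmetric extension of $s^{(\rho_1,\dots,\rho_k)}$, so that the identity \cref{eq:VirasoroOnSchur} — stated for general $\rho$ satisfying the boundary condition — is the honest consequence of the partition case and not an independent claim. All of this is routine once the fermionic picture is set up, which is why it is natural to present it only after \cref{sec:formalNSolutions}.
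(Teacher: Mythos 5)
Your proposal follows essentially the same route as the paper's proof: after absorbing the rescaling $\pp\mapsto\mathbf{p/2}$ and encoding $u^{-1}=2n$ through the charge, the operator becomes $\mathcal{L}_r+\tfrac{r-1}{2}\mathcal{J}_r-\tfrac{t}{2}\delta_{r,1}=\sum_{j\in\mathbb{Z}}(j+r-1)\psi_j\psi^*_{j+r}-\tfrac{t}{2}\delta_{r,1}$ acting on $|n,\lambda\rangle$, which lowers a single $\rho_i$ by $r$ with coefficient $n+\rho_i$, and the boundary condition $\rho_{k-i}=i-k$ kills the terms with $i>k-r$ exactly as you argue. One minor correction: the $-\tfrac{t}{2}\delta_{r,1}$ is simply the scalar $-t\delta_{r,1}/(1+b)$ evaluated at $b=1$; since that term contains no $p$-variables, the rescaling $\pp\mapsto\mathbf{p/2}$ plays no role in producing the factor $\tfrac12$.
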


We can now prove the theorem.
    
 \begin{proof}[Proof of Theorem~\ref{thm:schurExpansion}]
	 We will show that the RHS of~\eqref{eq:schurExpansion}
         satisfies the same Virasoro constraints as
         $\tau_{b=1}^{Z}(t;\pp,u)$, i.e.~it is annihilated by the
         Virasoro operator $\left(L^{Z}_r\Big|_{b=1} \right)$ for each $r\geq 1$. Multiplying by $p_r$ and summing over $r$, this will show that it satisfies the evolution equation~\eqref{eq:evolution} with $b=1$, which is enough to conclude since this equation has a unique solution in $\mathbb{Q}[\pp][[u,t]]$. 

	 Fix $r\geq 1$, and make the change of variables
         $u^{-1}=2n$. We will prove that for each partition $\lambda$
         the coefficients of $s_\lambda(\mathbf{p/2})$ in the Schur-expansion of
$$
	 \left(L^{Z}_r\Big|_{b=1} \right) \sum_{\mu} t^{|\mu|}a_\mu(n) s_{\mu}(\mathbf{p/2})$$
         is equal to zero. 
	
	 We first claim that it is enough to show
         that for each positive integer $k$ and for each $\rho \in
\mathbb{Z}^k$ with $\rho_i\geq -i$ and $\rho_{k-i} = i-k$ for $0 \leq i < r$ we have, in previous notation
	 \begin{equation}\label{eq:toprove}
		 \sum_{i=1}^{k}(n+\rho_i+r)\frac{a_{\lambda+\epsilon_ir}(n)}{a_\lambda(n)} = \delta_{r,1}\frac{1}{2},
               \end{equation}
               as an identity between rational functions of the formal
               variable $n$.
	 Indeed, fix a partition $\lambda$ and let $k\geq\ell(\lambda)+r$. From the previous lemma, the coefficient of the Schur function $s_\lambda(\mathbf{p/2})$ in the quantity
$$
	 \left(L^{Z}_r\Big|_{b=1} \right) \sum_{\mu} t^{|\mu|}a_\mu(n) s_{\mu}(\mathbf{p/2})$$
	 is equal to $t^{|\lambda|}$ times 
 $$ -\frac{t}{2}\delta_{r,1} a_\lambda(n)
	 + t^r\sum_{i=1}^{k-r}\sum_{\nu}  \sgn(\sigma^{\nu,\lambda})\Bigl(n+\nu_i-i\Bigr)         a_\nu(n)$$
	 where the second sum is taken over all partitions $\nu$ such that $\nu - r\epsilon_i\equiv\lambda$ (here we write $\lambda\equiv\mu$ if there exists a permutation $\sigma$ such that $\lambda_j-j=\mu_{\sigma(j)}-\sigma(j)$ for all $j$, and if this is the case we write $\sigma=\sigma^{\lambda,\mu}$).
		 Now, note that there exists $i$ such that $\nu -
                 r\epsilon_i\equiv\lambda$ if and only if
	 there exists $j$
                 such that $\lambda+r \epsilon_j\equiv\nu$, and
                 if this is the case the permutations $\sigma^{\nu -
                   r\epsilon_i,\lambda}$ and $\sigma^{\lambda+r
                   \epsilon_j,\nu}$ are inverse to each other, thus
                 have the same sign. Therefore, using that the symbol
                 $a_\nu$ is antisymmetric in the $\nu_i-i$, the last
                 quantity can be rewritten as
 $$ -\frac{t}{2}\delta_{r,1} a_\lambda(n)
	 + t^r\sum_{j=1}^{k}\Bigl(n+\lambda_j-j+r\Bigr)
	 a_{\lambda+r\epsilon_j}(n),$$
	 which, admitting~\eqref{eq:toprove}, is equal to zero.

	 \medskip

	 We will now prove the identity~\eqref{eq:toprove}, between rational functions of $n$.
	Note that for a fixed partition $\lambda$ the LHS of~\eqref{eq:toprove} does not depend on the choice
	of $k$ assuming that $n>k \geq \ell(\lambda)+r$ and that $\rho_i
        := \lambda_i-i$ (with the convention that $\rho_i=-i$ for $i >
        \ell(\lambda)$). This is a
        consequence of \eqref{eq:a_la} which implies that
        $a_{\lambda+\epsilon_ir}(n)=0$ for $i > \ell(\lambda)+r$
        (since $\rho_i+r = \rho_{i-r}$).

	 Fix a positive integer $k \geq \ell(\lambda)+r$. Using \eqref{eq:a_la} we can rewrite the LHS of~\eqref{eq:toprove}, for $n$ large enough, as
        \begin{equation}
          \label{eq:analytic}
          \sum_{i=1}^{k} \frac{(n+\rho_i)}{(\rho_i+k+r)_{(r)}(\rho_i+2n-k+r-1)_{(r)}} \prod_{j\neq i} \frac{(\rho_i-\rho_j+r)}{(\rho_i-\rho_j)}\frac{(\rho_i+\rho_j+2n)}{(\rho_i+\rho_j+2n+r)},
        \end{equation}
        where $(n)_{(m)}:=\prod_{i=1}^m(n-i+1)$ denotes the falling
        factorial. 
	 Note that in 	\eqref{eq:analytic} we have $\rho_{k-i}=i-k$ for $0 \leq i < r$.
   	We will thus treat  \eqref{eq:analytic} 
 as a rational function $f^{(k)}_{n,r} : \overline{\C}^{k-r} \to \overline{\C}$
	 of $(\rho_1,\dots,\rho_{k-r}) \in \overline{\C}^{k-r}$, with $\overline{\C} = \C\cup
        \{\infty\}$ .

\smallskip

	 We will now show by induction on $k\geq r+1$ that for  $n$ large enough, the function $f^{(k)}_{n,r}$ is constant as a function of $(\rho_1,\dots,\rho_{k-r})$, and that moreover it is equal to $\frac{1}{2}\delta_{r,1}$.

	 Let $k\geq r+1$. We will study the behaviour of $f^{(k)}_{n,r}$ at all possible poles (including infinity), which we now enumerate.
        \begin{itemize}[itemsep=0pt, topsep=0pt,parsep=0pt, leftmargin=12pt]
        \item $\rho_i=\rho_j = z$ for some $i\neq j, z \in \overline{\C}$. The residue $\Res_{\rho_i=\rho_j=z}f^{(k)}_{n,r}(\rho)$ is equal to
          \begin{align*}
         \frac{(n\splus{}z)}{(z\splus{}k\splus{}r)_{(r)}(z\splus{}2n\sminus{}k\splus{}r\sminus{}1)_{(r)}}
            \frac{2z\splus{}2n}{2z\splus{}2n\splus{}r} \times
            \prod_{h\neq {i,j}} \frac{(z\sminus{}\rho_h\splus{}r)}{(z\sminus{}\rho_h)}\frac{(z\splus{}\rho_h\splus{}2n)}{(z\splus{}\rho_h\splus{}2n\splus{}r)}(r\sminus{}r)
	    &= 0.
          \end{align*}
                  \item $\rho_i=\sminus{}\rho_j\sminus{}2n\sminus{}r = z\sminus{}n\sminus{}\tfrac{r}{2}$ for some $i\neq j, z \in \overline{\C}$. The residue $\Res_{\rho_i=\sminus{}\rho_j\sminus{}2n\sminus{}r= z\sminus{}n\sminus{}\tfrac{r}{2}}f^{(k)}_{n,r}(\rho)$ is equal to
			  \begin{align*} \sum_{\pm}\frac{\sminus{}r (\spm{}z\sminus{}\tfrac{r}{2})}{(\spm{}z\splus{}k\splus{}\tfrac{r}{2}\sminus{}n)_{(r)}(\spm{}z\splus{}n\sminus{}k\splus{}\tfrac{r}{2}\sminus{}1)_{(r)}}\frac{\spm{}2z\splus{}r}{\spm{}2z}
            \prod_{h\neq {i,j}}
            \frac{(\spm{}z\sminus{}\rho_h\splus{}\tfrac{r}{2}\sminus{}n)}{(\spm{}z\sminus{}\rho_h\sminus{}\tfrac{r}{2}\sminus{}n)}
            \frac{(\spm{}z\splus{}\rho_h\splus{}n\sminus{}\tfrac{r}{2})}{(\spm{}z\splus{}\rho_h\splus{}n\splus{}\tfrac{r}{2})}\\
            =	    	    \frac{(z\sminus{}\tfrac{r}{2})(z\splus{}\tfrac{r}{2})(\sminus{}r\splus{}(\sminus{}1)^{2r}r)}{z(\sminus{}z\splus{}k\splus{}\tfrac{r}{2}\sminus{}n)_{(r)}(\sminus{}z\splus{}n\sminus{}k\splus{}\tfrac{r}{2}\sminus{}1)_{(r)}}
	    \prod_{h\neq {i,j}}     \frac{(\sminus{}z\sminus{}\rho_h\splus{}\tfrac{r}{2}\sminus{}n)}{(\sminus{}z\sminus{}\rho_h\sminus{}\tfrac{r}{2}\sminus{}n)}\frac{(\sminus{}z\splus{}\rho_h\splus{}n\sminus{}\tfrac{r}{2})}{(\sminus{}z\splus{}\rho_h\splus{}n\splus{}\tfrac{r}{2})}
	    =0.          
          \end{align*}
          \item $\rho_i= j\sminus{}1\sminus{}k\sminus{}r$ for some $i \in [k-r], j \in [r]$. In
            order to show that there is no pole it is enough to recall
            that the function $f^{(k)}_{n,r}: \overline{\C}^{k-r} \to \C$ is
            equal to the function
                    \begin{equation*}
          \sum_{i=1}^{k\splus{}N} \frac{(n\splus{}\rho_i)}{(\rho_i\splus{}k\splus{}N\splus{}r)_{(r)}(\rho_i\splus{}2n\sminus{}k\sminus{}N\splus{}r\sminus{}1)_{(r)}} \prod_{j\neq i} \frac{(\rho_i\sminus{}\rho_j\splus{}r)}{(\rho_i\sminus{}\rho_j)}\frac{(\rho_i\splus{}\rho_j\splus{}2n)}{(\rho_i\splus{}\rho_j\splus{}2n\splus{}r)},
        \end{equation*}
        for any positive integer $N$, where $\rho_i := -i$ for $k  < i
        \leq k+N$ (assuming $n > k+N$). Indeed, this is true for
        arbitrary $(\rho_1,\dots,\rho_{k-r}) \in \Z_{\geq}^{k-r}$, and since
        $f^{(k)}_{n,r}$ is a rational function, it also holds true for
        arbitrary $(\rho_1,\dots,\rho_{k-r}) \in \overline{\C}^{k-r}$. Taking
        $N \geq j$ we can see that in fact there is no pole in
        $\rho_i= j\sminus{}1\sminus{}k\sminus{}r$.
        \item the same argument also shows that there is no
          pole in $\rho_i= j\sminus{}2n\splus{}k\sminus{}r$ for some $i \in [k-r], j \in [r]$.
          \item $\rho_i=\infty$. Note that the limit $\rho_i \to \infty$ is equal
            to
			\begin{align}\label{eq:quantity}
            \sum_{j\neq i}^{k} \frac{(n\splus{}\rho_j)}{(\rho_j\splus{}k\splus{}r)_{(r)}(\rho_j\splus{}2n\sminus{}k\splus{}r\sminus{}1)_{(r)}} \prod_{h\neq i,j} \frac{(\rho_j\sminus{}\rho_h\splus{}r)}{(\rho_j\sminus{}\rho_h)}\frac{(\rho_j\splus{}\rho_h\splus{}2n)}{(\rho_j\splus{}\rho_h\splus{}2n\splus{}r)}.
          \end{align}
	If $k=r+1$, \eqref{eq:quantity} is a finite quantity (which doesn't depend on any indeterminate).
If $k>r+1$, \eqref{eq:quantity} is equal to $f^{(k-1)}_{n-1,r}(\tilde{\rho})$, where $\tilde{\rho}=(\rho_1+1,\dots,\widehat{\rho_i+1}, \dots, \rho_{k-r}+1)$ and the hat means that the $i$-th value is ommitted.
By the induction hypothesis, if $n$ is large enough this quantity is independent of the $\rho_i$ and it is equal to $\frac{1}{2}\delta_{r,1}$.
        \end{itemize}

Since we examined all possible poles, we have just shown that the rational function $f^{(k)}_{n,r}$ is bounded, therefore by Liouville's theorem it is a constant, as a function of $(\rho_1,\dots,\rho_{k-r})$. 
Moreover, when $k>r+1$, this constant can be evaluated from the last bullet point (where we used the induction hypothesis), which  shows that it is equal $\frac{1}{2}\delta_{r,1}$ for $n$ large enough. 

\smallskip

Therefore to conclude the induction it only remains to adress the base case $k=r+1$. In this case we already know that $f^{(k)}_{n,r}$ is constant, and we use \eqref{eq:quantity} above to evaluate it (for convenience in the notation we choose to evaluate  $f^{(k)}_{n+1,r}$ instead)

\begin{align*}
		f^{(k)}_{n+1,r}(\rho_1) = f^{(k)}_{n+1,r}(\infty) 
		&=\sum_{i=1}^{r}
          \frac{(n\sminus{}i)}{(2r\sminus{}i)_{(r)}(2n\sminus{}1\sminus{}i)_{(r)}} \prod_{j\neq
            i} \frac{(\sminus{}i\splus{}j\splus{}r)}{(\sminus{}i\splus{}j)}\frac{(\sminus{}i\sminus{}j\splus{}2n)}{(\sminus{}i\sminus{}j\splus{}2n\splus{}r)}
          \\
          &= \sum_{i=1}^{r}
          \frac{(n\sminus{}i)(r\sminus{}i)!(2n\sminus{}1\sminus{}r\sminus{}i)!}{(2r\sminus{}i)!(2n\sminus{}1\sminus{}i)!}
            \frac{(\sminus{}1)^{i\sminus{}1} (2r\sminus{}i)!}{r(i\sminus{}1)!(r\sminus{}i)!(r\sminus{}i)!}\cdot\\
	    &\hspace{4cm}\frac{(2n\sminus{}1\sminus{}i)!(2n\sminus{}1\sminus{}i)!(2n\sminus{}2i\splus{}r)}{(2n\sminus{}1\sminus{}r\sminus{}i)!(2n\splus{}r\sminus{}1\sminus{}i)!2(n\sminus{}i)}\\
          &=\frac{1}{2}\sum_{i=1}^{r}(\sminus{}1)^{i\sminus{}1}\frac{(2n\sminus{}i\sminus{}1)!(2n\sminus{}2i\splus{}r)}{r(i\sminus{}1)!(r\sminus{}i)!(2n\splus{}r\sminus{}i\sminus{}1)!}\\
            &= \frac{1}{2r!}\sum_{i=0}^{r-1}(\sminus{}1)^i\binom{r\sminus{}1}{i}\frac{2n\sminus{}2\splus{}r\sminus{}2i}{(2n\sminus{}2\splus{}r\sminus{}i)_{(r)}}.
        \end{align*}
        In order to compute the last expression, we consider it as a 
	rational function of $2n$, $f_{r}: \overline{\C} \to \overline{\C}$
        and we will apply Liouville's theorem again. The only possible
        poles of $f_{r}$ (all of them are at most simple) are in $2n = 1\splus{}(\sminus{}1)^\epsilon
        j$, where $0 \leq j \leq r\sminus{}1$ and $\epsilon \in \{0,1\}$.
	But for $r>1$, the residue $\Res_{2n = 1\sminus{}  j}f_{r}(x)$ is null 
        because the terms of the sum corresponding to the indices $i$ and
	$r\sminus{}1\sminus{}j\sminus{}i$ cancel out, and the same is true for $\Res_{2n = 1\splus{}
	j}f_{r}(x)$. We thus obtain 
                \[ \Res_{x = \infty} f_r(x) =
                  \frac{1}{2}\delta_{r,1},\]
		which implies that $f_{n+1,r}$  (hence $f_{n,r}$) is equal to 
                $\frac{1}{2}\delta_{r,1}$.
This concludes the inductive part of the proof.

\medskip

We have thus proved \eqref{eq:toprove} for infinitely many values of $n$, therefore it is true as an identity between rational functions, and the proof is complete.
\end{proof}

\subsection{The conjecture of Oliveira and Novaes}
\label{subsec:OliveiraNovaes}

       An immediate corollary of Theorem~\ref{thm:schurExpansion} is the
        proof of \cite[Conjecture 1]{OliveiraNovaes2021}. Before we
        state it, let us introduce the notation from
        \cite{OliveiraNovaes2021} so that we can translate this
        conjecture into our framework. Let $N$ be a formal parameter
        and $m\geq 1$ be an integer, and define
        \begin{equation*}
          [N]_\lambda^{(2)} := Z_\lambda(1^N), \ \ \ 
          \{N\}_\lambda := \frac{m!}{\chi_\lambda(1^m)}o_\lambda(1^N),
\end{equation*}
where we recall for reference that $Z_\lambda(1^N) = \prod_{\square\in\lambda} (N+c_2(\square))$ and $\chi_\lambda(1^m)/m! = 1/\hook_\la$ which is the hook length formula. Let furthermore
\begin{equation*}
          G_{\lambda,\gamma} := \sum_{\mu \vdash
                                m}\frac{m!}{z_\mu}\omega_\lambda(\mu)\chi_\gamma(\mu),
        \end{equation*}
        where $\chi_\gamma$ is the character of the irreducible 
        representation of the symmetric group $\Sym{m}$, and
        $\omega_\lambda$ is the zonal spherical function of the
        Gelfand pair $(\Sym{2m},H_m)$ with $H_m$ the
        hyperoctahedral group, indexed respectively by partitions $\gamma$ and $\lambda$ of $m$. Irreducible characters of the symmetric
        group and zonal spherical function of the
        Gelfand pair $(\Sym{2m},H_m)$ are related to Schur $s_\lambda$
        and zonal $Z_\lambda$
        symmetric functions, respectively, by their expansions in
        the power-sum basis
        \begin{align*}
         \frac{\chi_\lambda(1^m)}{m!}J^{(1)}_\lambda &=
          s_\lambda = \sum_{\mu \vdash m} \chi_\lambda(\mu)
                                                      \frac{p_\mu}{z_\mu},\\
          J^{(2)}_\lambda &= Z_\lambda =   2^m\sum_{\mu \vdash m} \omega_\lambda(\mu)
                                                      \frac{p_\mu}{2^{\ell(\mu)}z_\mu}.
          \end{align*}
We refer to ~\cite{Macdonald1995} for background. We have
\begin{theorem}[{\cite[Conjecture 1]{OliveiraNovaes2021}}]\label{thm:OliveiraNovaes}
          The following identity holds:
          \[ \sum_{\lambda \vdash m}
            \frac{\chi_{2\lambda}(1^{2m})}{[N]_\lambda^{(2)}}G_{\lambda,\gamma}
              = \frac{(2m)!}{2^mm!}\frac{\chi_\gamma(1^m)}{\{N\}_\gamma}.\]
          \end{theorem}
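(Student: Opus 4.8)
The claim will follow from Theorem~\ref{thm:schurExpansion} once one unwinds the notation borrowed from \cite{OliveiraNovaes2021}, so the plan is essentially a bookkeeping exercise. First I would extract the coefficient of $t^{m}$ in the identity of Theorem~\ref{thm:schurExpansion}. Since the product $\prod_{\square\in\lambda}\bigl(u^{-1}+c_{2}(\square)\bigr)$ occurring in Definition~\ref{def:mainfunction} is by definition $Z_\lambda(1^{u^{-1}})=[u^{-1}]^{(2)}_\lambda$, this produces, with $N:=u^{-1}$, the ``master identity''
\begin{equation}\label{eq:masterON}
\sum_{\lambda\vdash m}\frac{Z_\lambda(\pp)}{j_\lambda^{(1)}\,[N]^{(2)}_\lambda}\;=\;\sum_{\lambda\vdash m}\frac{s_\lambda(\mathbf{p/2})}{\hook_\lambda^{2}\,o_\lambda(1^{N})},
\end{equation}
an equality of degree-$m$ symmetric functions with coefficients in $\mathbb{Q}(N)$.

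Next I would pair both sides of \eqref{eq:masterON} with the Schur function $s_\gamma$ under the Jack scalar product $\langle\,\cdot\,,\,\cdot\,\rangle_{1}$ (the case $b=1$ of~\eqref{eq:product}). Using $\langle p_\mu,p_\mu\rangle_{1}=2^{\ell(\mu)}z_\mu$ together with the two power-sum expansions recalled above, one checks that $\langle Z_\lambda,s_\gamma\rangle_{1}$ equals $2^{m}\sum_{\mu\vdash m}\tfrac{m!}{z_\mu}\omega_\lambda(\mu)\chi_\gamma(\mu)=2^{m}G_{\lambda,\gamma}$, while $\langle s_\lambda(\mathbf{p/2}),s_\gamma\rangle_{1}=\sum_{\mu\vdash m}\tfrac{\chi_\lambda(\mu)\chi_\gamma(\mu)}{z_\mu}=\delta_{\lambda,\gamma}$ by the first orthogonality relation for the irreducible characters of $\Sym{m}$. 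Hence the right-hand side of \eqref{eq:masterON} collapses to a single term, and \eqref{eq:masterON} becomes $2^{m}\sum_{\lambda\vdash m}G_{\lambda,\gamma}\big/\bigl(j_\lambda^{(1)}[N]^{(2)}_\lambda\bigr)=1\big/\bigl(\hook_\gamma^{2}\,o_\gamma(1^{N})\bigr)$. It then remains to rewrite the leftover scalars: the hook-length formula gives $\hook_\gamma=m!/\chi_\gamma(1^m)$, so that $\hook_\gamma^{2}\,o_\gamma(1^{N})=\hook_\gamma\{N\}_\gamma=\tfrac{m!}{\chi_\gamma(1^m)}\{N\}_\gamma$, and similarly $1/j_\lambda^{(1)}=\chi_{2\lambda}(1^{2m})/(2m)!$; substituting and multiplying through by $(2m)!/2^{m}$ yields exactly the asserted identity.

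The one nontrivial input — everything else being the dictionary — is the identity $\hook_{2\lambda}=j_\lambda^{(1)}$ used above for the doubled partition $2\lambda=(2\lambda_1,2\lambda_2,\dots)$. I would prove it by a direct hook-length computation: the two boxes of $2\lambda$ sitting in columns $2j-1$ and $2j$ of its $i$-th row have hook lengths $2a(\square)+\ell(\square)+2$ and $2a(\square)+\ell(\square)+1$ respectively, where $a(\square)$ and $\ell(\square)$ denote the arm and the leg of the corresponding box $\square=(i,j)$ of $\lambda$; multiplying over all $\square\in\lambda$ and comparing with~\eqref{eq:HookProduct}--\eqref{eq:HookProduct2} gives $\hook_{2\lambda}=\hook_{1}(\lambda)\,\hook'_{1}(\lambda)=j_\lambda^{(1)}$. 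Beyond this and Theorem~\ref{thm:schurExpansion} there is no genuine obstacle; the only thing demanding attention is to keep straight the powers of $2$, the factors of $m!$, and the precise normalization of the zonal spherical functions $\omega_\lambda$ (fixed by $\omega_\lambda(1^m)=1$) when matching the overall constant $(2m)!/(2^{m}m!)$.
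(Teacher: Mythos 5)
Your proposal is correct and follows essentially the same route as the paper: specialize Theorem~\ref{thm:schurExpansion}, pair both sides with a Schur function under a pairing that makes the scaled Schur functions orthonormal (your $\langle\cdot,\cdot\rangle_{1}$ applied to $s_\gamma(\pp)$ is literally the same linear functional as the paper's Hall scalar product in the variables $\mathbf{p/2}$ applied to $s_\gamma(\mathbf{p/2})$), and then translate constants via the hook-length dictionary. The identity $\hook_{2\lambda}=j_\lambda^{(1)}$, which you rightly isolate as the only nontrivial input and verify by the arm/leg computation, is exactly what the paper invokes implicitly under the phrase ``the hook length formula.''
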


          \begin{proof}
\cref{thm:schurExpansion} is an explicit equality between an expansion on zonal symmetric functions and an expansion on scaled Schur functions. Using the notations from \cite{OliveiraNovaes2021}, the hook length formula and specializing $t=1$, it reads
\begin{equation} \label{SchurExpansionON}
\sum_{m\geq 0} \sum_{\lambda\vdash m} \frac{\chi_{2\lambda}(1^{2m})}{(2m)!} \frac{Z_\lambda(\pp)}{[N]^{(2)}_\lambda} = \sum_{m\geq 0} \sum_{\lambda\vdash m} \frac{\chi_\lambda(1^m)}{m!} \frac{s_\lambda(\mathbf{p/2})}{\{N\}_\lambda}.
\end{equation}

Consider the standard Hall scalar product $\langle,\rangle=\langle,\rangle_1$ on the space of 
symmetric functions with respect to the variables $\pp':=\mathbf{p/2}$. By
definition (see e.g.~\cite{Macdonald1995}) we have that
\begin{align*}
  \langle s_\lambda(\pp'), s_\mu(\pp')\rangle = \delta_{\la,\mu} \ \ , \ \ 
  \langle p_\lambda, p_\mu \rangle = 2^{2\ell(\lambda)}z_\la\delta_{\la,\mu}.
\end{align*}
We will now take the scalar product of both sides of \eqref{SchurExpansionON} with $s_\gamma(\pp')$. The RHS is
equal to
\begin{equation*}
\left\langle\sum_\lambda \frac{\chi_\lambda(1^m)}{m!} \frac{s_\lambda(\pp')}{\{N\}_\lambda}, s_\gamma(\pp')\right\rangle =   \frac{1}{m!}\frac{\chi_\gamma(1^m)}{\{N\}_\gamma}.
\end{equation*}
To evaluate the LHS, we expand the summand in the power-sum basis
\[ \frac{\chi_{2\lambda}(1^{2m})}{(2m)!} \frac{Z_\lambda(\pp)}{[N]^{(2)}_\lambda}  =
\frac{2^m \chi_{2\lambda}(1^{2m}) }{(2m)![N]^{(2)}_\la}  \sum_{\mu \vdash
                                m}\frac{\omega_\lambda(\mu)2^{-\ell(\mu)}p_\mu}{z_\mu},\]
which directly leads to
\[ \left\langle
\frac{\chi_{2\lambda}(1^{2m})}{(2m)!} \frac{Z_\lambda(\pp)}{[N]^{(2)}_\lambda},
  s_\gamma(\pp') \right\rangle = \frac{2^m \chi_{2\lambda}(1^{2m}) }{(2m)![N]^{(2)}_\la}  \sum_{\mu \vdash
                                m}\frac{\omega_\lambda(\mu)\chi_\gamma(\mu)}{z_\mu
                                } = \frac{2^m
                                  \chi_{2\lambda}(1^{2m})
                                }{(2m)![N]^{(2)}_\la}G_{\lambda,\gamma}.\]
                              Therefore we have
                              \[ \sum_{\lambda \vdash m}\frac{2^m
                                  \chi_{2\lambda}(1^{2m})
                                }{(2m)![N]^{(2)}_\la}G_{\lambda,\gamma}
                                  =
                                  \frac{1}{m!}\frac{\chi_\gamma(1^m)}{\{N\}_\gamma}\]
                                and multiplying both sides by $\frac{(2m)!}{2^m}$
gives the desired equality.
\end{proof}

\subsection{On the symplectic case $b=-1/2$.}
To conclude this section, we mention that in the same way as the value $b=1$ is related to the orthogonal group, the case $b=-1/2$ is related to the symplectic group. We will not state symplectic analogues of all results of this section since the two values are related by a form of duality. For example, the following symplectic analogue of Theorem~\ref{thm:schurExpansion} is in fact a direct corollary of it:
      \begin{corollary}[Explicit expansion of $\tau_{b=-1/2}^{Z}$ in Schur functions]\label{cor:schurExpansionSymplectic}
	The function  $\tau_{b=-1/2}^{Z}(t;\pp,u)$ defined by its expansion~\eqref{eq:defmain}, has the following expansion in Schur functions of the variables~$\pp$,
	\begin{align}\label{eq:schurExpansionSymplectic}
		\tau_{b=-1/2}^{Z}(t;\pp,u) =\sum_{n\geq 0} (4t)^n \sum_{\lambda}\frac{s_\lambda(\pp)}{\hook_\lambda^2\cdot
		o_{\lambda^t}(1^{2u^{-1}})} = \sum_{n\geq 0} (-4t)^n \sum_{\lambda}\frac{s_\lambda(\pp)}{\hook_\lambda^2\cdot
	sp_{\lambda}(1^{-2u^{-1}})},
	\end{align}
	where $sp_{\lambda}(1^{2n})$ denotes the dimension of the
        irreducible representation of the heighest weight $\lambda$ of
        the symplectic orthogonal group $Sp(2n)$.
        This identity holds in $\mathbb{Q}(u)[\pp][[t]]$ and in $\mathbb{Q}[\pp][[u,t]]$.
      \end{corollary}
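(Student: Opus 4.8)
The plan is to derive this from Theorem~\ref{thm:schurExpansion} by applying the classical duality of Jack symmetric functions that exchanges the parameter $\alpha=1+b$ with $1/\alpha$, hence the cases $b=1$ (where $\alpha=2$) and $b=-1/2$ (where $\alpha=\tfrac12$). Let $\omega_2$ denote the $\QQ$-algebra endomorphism of $\Symm_b$, extended to act trivially on $t$ and $u$, determined by $\omega_2(p_r)=(-1)^{r-1}\,2\,p_r$. I would apply $\omega_2$ to both sides of the identity~\eqref{eq:schurExpansion}.

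For the left-hand side I would use three facts about Jack functions under conjugation of the diagram. The first is the classical transposition duality $\omega_2\big(J^{(b)}_\lambda\big|_{b=1}\big)=2^{|\lambda|}\,J^{(b)}_{\lambda^t}\big|_{b=-1/2}$, i.e.\ $\omega_2(Z_\lambda)=2^{|\lambda|}\big(J^{(b)}_{\lambda^t}\big|_{b=-1/2}\big)$ (see e.g.\ \cite{Macdonald1995,Stanley1989}). The second, which follows from the hook formulas~\eqref{eq:HookProduct}--\eqref{eq:HookProduct2} by swapping arms and legs, is $j^{(b)}_\lambda\big|_{b=1}=2^{2|\lambda|}\,j^{(b)}_{\lambda^t}\big|_{b=-1/2}$. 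The third is a one-line computation with $c_b(\square)=b(x-1)+x-y$ showing $c_b(\square)\big|_{b=1}=-2\,c_b(\square^t)\big|_{b=-1/2}$ for the transposed box $\square^t\in\lambda^t$, whence
\[
\prod_{\square\in\lambda}\big(u^{-1}+c_b(\square)\big)\Big|_{b=1}
=(-2)^{|\lambda|}\prod_{\square\in\lambda^t}\big(-\tfrac12 u^{-1}+c_b(\square)\big)\Big|_{b=-1/2}.
\]
Substituting these into $\omega_2$ of the series~\eqref{eq:defmain} defining $\tau_{b=1}^{Z}$, reindexing the sum by $\lambda\mapsto\lambda^t$, and collecting the resulting powers of $\pm2$, I would identify $\omega_2\big(\tau_{b=1}^{Z}(t;\pp,u)\big)$ with $\tau_{b=-1/2}^{Z}$ evaluated at rescaled variables of the form $t\mapsto c\,t$, $u\mapsto c'\,u$, with $c,c'$ monomials in $\pm2$ and $\pm\tfrac12$ dictated by the three relations above.

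For the right-hand side, a direct calculation from $\omega_2(p_\mu)=(-1)^{|\mu|-\ell(\mu)}2^{\ell(\mu)}p_\mu$ and $\chi^{\lambda^t}=\sgn\cdot\chi^\lambda$ gives $\omega_2\big(s_\lambda(\mathbf{p/2})\big)=s_{\lambda^t}(\pp)$, while $\hook_{\lambda^t}=\hook_\lambda$. Applying $\omega_2$ to~\eqref{eq:schurExpansion}, plugging in the variable rescaling found above (so that $o_\lambda(1^{u^{-1}})$ becomes $o_\lambda$ evaluated at $\pm2u^{-1}$), and reindexing $\lambda\mapsto\lambda^t$, produces the first displayed identity in~\eqref{eq:schurExpansionSymplectic}; that it holds in $\QQ(u)[\pp][[t]]$ and in $\QQ[\pp][[u,t]]$ is immediate because $\omega_2$ is a continuous ring automorphism for the $(t,u)$-adic topology and the change of variables is invertible. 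Finally, the second displayed identity follows from the first by the classical duality between the dimension polynomials of the orthogonal and symplectic groups, $sp_\lambda(1^{-N})=(-1)^{|\lambda|}o_{\lambda^t}(1^{N})$ as elements of $\QQ[N]$ (which can be checked directly by comparing~\eqref{eq:dimWeyl} and~\eqref{eq:dimWeyl2} with the Weyl dimension formula for $Sp(2n)$), the sign $(-1)^{|\lambda|}$ being absorbed into the power of $t$.

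I expect the only genuine difficulty to be bookkeeping: keeping track of the several sign-sensitive scaling factors (powers of $\pm2$ and $\pm\tfrac12$) produced by the Jack-duality and $b$-content relations, and fixing box-coordinate conventions so that the transposition formula for $b$-contents and the orthogonal/symplectic dimension duality are applied consistently. None of the individual steps is conceptually deep, which is precisely why the corollary can reasonably be called a direct corollary of Theorem~\ref{thm:schurExpansion} rather than being proved from scratch.
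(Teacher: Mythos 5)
Your proposal follows exactly the paper's proof: apply $\omega_2(p_r)=2(-1)^{r-1}p_r$ to both sides of \eqref{eq:schurExpansion} and invoke the transposition dualities for Jack polynomials, their norms, the $b$-contents, the scaled Schur functions, and the orthogonal/symplectic dimension polynomials $o_{\lambda^t}(1^u)=(-1)^{|\lambda|}sp_\lambda(1^{-u})$. Your explicit relation $c_b(\square)\big|_{b=1}=-2\,c_b(\square^t)\big|_{b=-1/2}$ is the correct, sign-sensitive form of the content identity the paper uses, so the only remaining work is the $\pm2$ bookkeeping that you already anticipate as the sole delicate point.
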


      \begin{proof}
       This identity is obtained by applying the 
       transformation $\omega_2(p_r) := 2(-1)^{r-1}p_r$ on both sides
       of \eqref{eq:schurExpansion} and using the following classical
       identities (see~\cite{ElSamraKing1979,Stanley1989,Macdonald1995}):
       \begin{align*}
         \omega_2(s_\lambda(\mathbf{p/2})) &= s_{\lambda^{t}}(\pp),\\
         \omega_2(J_\lambda^{(1)}(\pp))
                                           &=2^{|\lambda|}J_{\lambda^t}^{(-1/2)}(\pp),\\
         \prod_{\square \in \lambda}\frac{1}{u^{-1}+c_{(1)}(\square)}
                                           &= \prod_{\square \in
                                             \lambda^t}\frac{2^{-1}}{u^{-1}/2+c_{(-1/2)}(\square)},\\
         j_{\lambda}^{(1)} &=  4^nj_{\lambda^t}^{(-1/2)},\\
         o_{\lambda^t}(1^u) &=  (-1)^{|\lambda|}sp_{\lambda}(1^{-u}).\qedhere
         \end{align*}
        \end{proof}
 Using the same techniques applied to \eqref{eq:schurExpansionSymplectic} we can obtain an analogous
  result to \cref{thm:OliveiraNovaes} involving symplectic characters
  and symplectic zonal spherical functions related to symplectic
  zonal polynomials $J_\lambda^{(-1/2)}$ in the same way as zonal
  spherical functions $\omega_\lambda$ are related to zonal $J_\lambda^{(1)}$.
  polynomials. Details are left to the reader.

\section{Pfaffians and formal large BKP hierarchy}
\label{sec:BKP}

\subsection{Pfaffians} We will use the following definition for the Pfaffian.
\begin{definition}\label{def:Pfaffian}
	The \emph{Pfaffian} of a skew-symmetric matrix $A$ of even size $n$ is the quantity $\Pf(A)$ defined by:
       \begin{equation}
         \label{eq:DefPf}
	       \Pf(A) := \frac{1}{2^{n/2} ( n/2)!}\sum_{\sigma \in \Sym{n}}\sgn(\sigma)\prod_{i=1}^{ n/2}a_{\sigma(2i-1),\sigma(2i)}.
       \end{equation}
\end{definition}
We have the following classical Schur's Pfaffian identity.
				\begin{lemma}
                \label{lem:SchurPfaffian}
                Let $n$ be an integer and $x_1, \dotsc, x_n$ be $n$
                real variables. Set $x_{n+1}:=0$ by convention, and
                $x_{ij} := \frac{x_i-x_j}{x_i+x_j}$ for $i,j=1,
                \dotsc, n+1$, with $x_{ij}=\begin{cases} 1 \text{ if }
                  x_i=x_j=0 \text{ and } i\neq j,\\ 0 \text{ if }
                  x_i=x_j=0 \text{ and } i=j.\end{cases}$. Then
                \begin{equation}
                  \label{eq:SchurPfaffian}
                  \prod_{1 \leq i < j \leq n}\frac{x_i-x_j}{x_i+x_j} = \begin{cases} \Pf( x_{ij})_{1\leq i,j\leq n} \qquad & \text{for $n$ even,}\\
				  \Pf( x_{ij})_{1\leq i,j\leq n+1} & \text{for $n$ odd.} \end{cases}
                  \end{equation}
                \end{lemma}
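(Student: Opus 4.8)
The plan is to reduce to even $n$ and then prove the resulting identity of rational functions by the classical scheme: clear denominators, use antisymmetry and a degree count to fix the answer up to a constant, then evaluate that constant at a convenient specialization. First I would dispose of the odd case: if $n$ is odd, appending $x_{n+1}:=0$ gives $\prod_{i=1}^{n}\tfrac{x_i-x_{n+1}}{x_i+x_{n+1}}=1$, so $\prod_{1\le i<j\le n}\tfrac{x_i-x_j}{x_i+x_j}=\prod_{1\le i<j\le n+1}\tfrac{x_i-x_j}{x_i+x_j}$ and the odd case follows from the even case applied to the $n+1$ variables $x_1,\dots,x_n,x_{n+1}=0$. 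From now on $n$ is even, and it is enough to argue for generic $x_1,\dots,x_n$ (pairwise distinct and nonzero); the degenerate conventions for $x_{ij}$ when $x_i=x_j=0$ are then forced by continuity.

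Put $W(x):=\prod_{1\le i<j\le n}(x_i+x_j)$. In the Pfaffian of Definition~\ref{def:Pfaffian}, every monomial $\prod_{k=1}^{n/2}x_{\sigma(2k-1),\sigma(2k)}$ runs over a perfect matching of $\{1,\dots,n\}$ and hence has denominator dividing $W(x)$; thus $P(x):=\Pf\bigl((x_{ij})_{1\le i,j\le n}\bigr)\,W(x)$ is a polynomial, and the claim becomes $P(x)=\prod_{1\le i<j\le n}(x_i-x_j)=:V(x)$. The polynomial $P$ is antisymmetric, since permuting the variables by $\tau$ permutes the rows and columns of $(x_{ij})$ simultaneously — multiplying its Pfaffian by $\sgn(\tau)$ — while $W$ is symmetric; hence $V\mid P$. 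Moreover, after multiplying a Pfaffian monomial by $W$, the $n/2$ numerators contribute degree $n/2$ and the remaining $\binom{n}{2}-n/2$ factors of $W$ are untouched, so $\deg P\le\binom{n}{2}=\deg V$; therefore $P=c_nV$ for a scalar $c_n$. To see $c_n=1$, I would specialize $x_i:=t^{\,i}$ and let $t\to+\infty$: then $x_{ij}\to-1$ for all $i<j$, so $\Pf\bigl((x_{ij})\bigr)$ tends by continuity to $\Pf(A)$ for $A$ the skew-symmetric matrix with $A_{ij}=-1$ whenever $i<j$, and $\Pf(A)=(-1)^{n/2}$ (for instance by the row expansion $\Pf(A_n)=-\Pf(A_{n-2})$ with $\Pf(A_2)=-1$). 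Meanwhile $V(x)/W(x)=\prod_{i<j}\tfrac{x_i-x_j}{x_i+x_j}\to(-1)^{\binom{n}{2}}=(-1)^{n/2}$ since $n$ is even; comparing the two limits gives $c_n=1$.

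The only genuinely delicate step is pinning down the constant $c_n$; the divisibility and degree steps are routine, the only care needed being to check that $P$ is truly a polynomial of degree $\le\binom{n}{2}$ and that the limit of the Pfaffian equals the Pfaffian of the limiting matrix. An alternative, self-contained route by induction on $n$ is to expand $\Pf\bigl((x_{ij})_{1\le i,j\le n}\bigr)$ along its first row and invoke the induction hypothesis; this reduces the statement to the rational identity
\[
\sum_{j=2}^{n}(-1)^{j}\prod_{\substack{2\le k<l\le n\\ k,l\ne j}}\frac{x_k-x_l}{x_k+x_l}=\prod_{2\le k<l\le n}\frac{x_k-x_l}{x_k+x_l},
\]
which itself can be obtained by the same clearing-denominators argument or by a partial-fraction/residue computation in one of the variables.
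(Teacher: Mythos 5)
Your argument is correct, but it is worth noting that the paper does not actually prove this lemma: it invokes it as Schur's classical Pfaffian identity and only records the observation (which you reproduce verbatim) that the odd case follows from the even case after appending $x_{n+1}=0$. What you add is a self-contained proof of the even case by the standard scheme: multiplying $\Pf\bigl((x_{ij})\bigr)$ by $W(x)=\prod_{i<j}(x_i+x_j)$ gives a polynomial (each Pfaffian monomial uses disjoint pairs, hence distinct factors of $W$), antisymmetry under simultaneous row--column permutations (i.e.\ $\Pf(BAB^t)=\det(B)\Pf(A)$ for a permutation matrix $B$, which is the paper's identity \eqref{eq:simpleCBForPfaffians}) forces divisibility by the Vandermonde, the degree count pins the quotient to a constant, and the specialization $x_i=t^i$, $t\to\infty$, evaluates the constant to $1$ since both $\Pf$ of the limiting $\pm1$ matrix and $\prod_{i<j}\tfrac{x_i-x_j}{x_i+x_j}$ tend to $(-1)^{n/2}$. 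All of these steps check out, so your proof is a valid substitute for the citation. Two minor quibbles: the degenerate convention $x_{ij}=1$ when $x_i=x_j=0$, $i\neq j$, is not literally ``forced by continuity'' (the function $\tfrac{x-y}{x+y}$ has no limit at the origin); it is only the continuous extension along the relevant slice where one of the two variables is identically $0$, which is all that is needed here. Also, in your closing aside, the displayed identity is not the reduction produced by expanding the Pfaffian along its first row: that expansion retains the factors $\tfrac{x_1-x_j}{x_1+x_j}$ on the left and the pairs involving $x_1$ on the right, and your display is only its specialization at $x_1=0$ (equivalently, the odd-variable case). Since that route is offered only as an alternative sketch, this slip does not affect the main argument.
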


Notice that the case $n$ odd is a consequence of the case $n$ even with our conventions, since then $\prod_{1 \leq i < j \leq n}\frac{x_i-x_j}{x_i+x_j} = \prod_{1 \leq i < j \leq n+1}\frac{x_i-x_j}{x_i+x_j}$.

We will also need the following minor summation formula for Pfaffians.

	    \begin{proposition}[\cite{IshikawaWakayama1995}]
        \label{prop:CBForPfaffians}
        Suppose that $n$ is a positive even integer, $\N\cup
        \{\infty\} \ni N \geq n$ is an integer, $B$ is a $n \times
        N$ matrix and $A$ is a $N\times N$ skew-symmetric matrix. Then
        \begin{align}
          \label{eq:CBForPfaffians}
\sum_{\substack{I \subset [N],\\ |I|=n}}\det(B_{i,j})_{i \in [n], j \in I}\Pf(A_{i,j})_{i,j \in I} = \Pf(BAB^t).
                \end{align}
\end{proposition}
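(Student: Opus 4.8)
The plan is to prove \cref{prop:CBForPfaffians} by translating both sides into exterior algebra, where it becomes an immediate consequence of the functoriality of exterior powers. I would work over a commutative $\QQ$-algebra $R$ containing the entries of $A$ and $B$ (so that $m!$ is invertible in $R$). For a finite totally ordered set $S$, write $\omega_C:=\sum_{i<j\ \text{in}\ S}C_{ij}\,e_i\wedge e_j$ for the bivector attached to a skew-symmetric matrix $C=(C_{ij})_{i,j\in S}$ in the exterior algebra on a basis $(e_i)_{i\in S}$. The only Pfaffian fact needed is the classical identity
\begin{equation}\label{eq:wedgePf}
\frac{1}{m!}\,\omega_C^{\wedge m}=\sum_{\substack{I\subseteq S\\ |I|=2m}}\Pf(C_{i,j})_{i,j\in I}\;e_I,\qquad e_I:=e_{i_1}\wedge\cdots\wedge e_{i_{2m}}\ \text{for}\ I=\{i_1<\cdots<i_{2m}\},
\end{equation}
which one proves by expanding the $m$-th wedge power of $\omega_C$ and grouping monomials according to the $2m$-element index set they involve and the perfect matching of it that each monomial determines, the factor $m!$ recording the orderings of the matched pairs. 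I would also introduce the free $R$-module $V$ on $e_1,\dots,e_N$ and the free $R$-module $W$ of rank $n$ on $f_1,\dots,f_n$, and the $R$-linear map $\phi\colon V\to W$ with matrix $B$, i.e.\ $\phi(e_j)=\sum_{i=1}^n B_{ij}\,f_i$; it induces an algebra homomorphism $\bigwedge\phi$ of exterior algebras, with graded components $\bigwedge^k\phi\colon\bigwedge^k V\to\bigwedge^k W$.

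\textbf{The computation.} The key point is that $\bigwedge^2\phi$ sends the bivector attached to $A$ to the one attached to $BAB^t$. Indeed $\phi(e_i)\wedge\phi(e_j)=\sum_{k,l}B_{ki}B_{lj}\,f_k\wedge f_l$, so the coefficient of $f_k\wedge f_l$ (with $k<l$) in $(\bigwedge^2\phi)(\omega_A)$ equals $\sum_{i<j}A_{ij}(B_{ki}B_{lj}-B_{li}B_{kj})=\sum_{i,j}A_{ij}B_{ki}B_{lj}=(BAB^t)_{kl}$ using the skew-symmetry of $A$; since $BAB^t$ is itself skew-symmetric, this means
\[
({\textstyle\bigwedge^2\phi})(\omega_A)=\omega_{BAB^t}.
\]
Now apply the ring homomorphism $\bigwedge\phi$ to \eqref{eq:wedgePf} taken with $C=A$, $S=[N]$, $2m=n$. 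On the one hand, since $\bigwedge^n W$ is free of rank one on $f_1\wedge\cdots\wedge f_n$, the left-hand side becomes
\[
({\textstyle\bigwedge^n\phi})\Bigl(\tfrac{1}{(n/2)!}\,\omega_A^{\wedge n/2}\Bigr)=\tfrac{1}{(n/2)!}\bigl(\omega_{BAB^t}\bigr)^{\wedge n/2}=\Pf(BAB^t)\,f_1\wedge\cdots\wedge f_n,
\]
the last equality being \eqref{eq:wedgePf} applied to the $n\times n$ matrix $BAB^t$. On the other hand, applying $\bigwedge^n\phi$ to the right-hand side of \eqref{eq:wedgePf} term by term and using that $(\bigwedge^n\phi)(e_I)=\phi(e_{i_1})\wedge\cdots\wedge\phi(e_{i_n})=\det(B_{i,j})_{i\in[n],\,j\in I}\cdot f_1\wedge\cdots\wedge f_n$ for $I=\{i_1<\cdots<i_n\}$, the same element of $\bigwedge^n W$ equals
\[
\Bigl(\,\sum_{\substack{I\subseteq[N]\\ |I|=n}}\det(B_{i,j})_{i\in[n],\,j\in I}\;\Pf(A_{i,j})_{i,j\in I}\Bigr)f_1\wedge\cdots\wedge f_n.
\]
Comparing the coefficients of $f_1\wedge\cdots\wedge f_n$ in the last two displays yields exactly \eqref{eq:CBForPfaffians}.

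\textbf{Expected main obstacle.} For finite $N$ the argument above is a purely formal manipulation, so I do not expect any difficulty there. The one genuinely delicate point is the case $N=\infty$, in which the sum over $I$ in \eqref{eq:CBForPfaffians} is infinite and \eqref{eq:wedgePf} would involve an infinite exterior power; this is the step requiring care. The cleanest resolution is to note that in the setting where the proposition is applied (\cref{sec:BKP}), the entries of $A$ and $B$ lie in a formal power series ring in which, for each fixed monomial in the formal variables, only finitely many subsets $I$ contribute to \eqref{eq:CBForPfaffians}; the left-hand side is then well defined, and one reduces to the finite case by truncating $A$ and $B$. Alternatively one may run the exterior-algebra argument verbatim in an appropriate completion of $\bigwedge V$, the point being that the crucial identity $(\bigwedge^2\phi)(\omega_A)=\omega_{BAB^t}$ already lives in the finite-rank module $\bigwedge^2 W$. (A route avoiding exterior algebra altogether would be an induction on $n$ combining a Laplace-type expansion of the Pfaffian with the Cauchy--Binet formula for determinants, but this is considerably more laborious and I would prefer the argument above.)
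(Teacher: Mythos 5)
Your proof is correct. Note that the paper itself does not prove \cref{prop:CBForPfaffians}: it is quoted as a known result of Ishikawa--Wakayama \cite{IshikawaWakayama1995}, so there is no in-paper argument to compare against. Your exterior-algebra derivation is the standard proof of the minor summation formula (and is essentially the method of the cited reference): the identity $\tfrac{1}{m!}\omega_C^{\wedge m}=\sum_{|I|=2m}\Pf(C_I)\,e_I$ is consistent with the normalization of Definition~\ref{def:Pfaffian}, the computation $(\bigwedge^2\phi)(\omega_A)=\omega_{BAB^t}$ is right (the skew-symmetry of $A$ is used exactly where you use it), and comparing coefficients of $f_1\wedge\cdots\wedge f_n$ after applying the algebra homomorphism $\bigwedge\phi$ gives \eqref{eq:CBForPfaffians} for finite $N$. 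Your treatment of the case $N=\infty$ is also the right reading of how the proposition is used here: in Theorem~\ref{thm:pfaffianBGW} the matrix $B$ has entries $(tx_i)^j$ with $t$ a formal variable, so each fixed power of $t$ receives contributions from only finitely many subsets $I$ (equivalently, finitely many partitions $\lambda$ with $\ell(\lambda)\leq n$), and the identity reduces to the finite-$N$ case by truncation; the key relation $(\bigwedge^2\phi)(\omega_A)=\omega_{BAB^t}$ indeed already lives in the finite-rank module $\bigwedge^2W$, as you say. The only cosmetic remark is that inverting $m!$ is not really needed, since one can state the wedge identity as $\omega_C^{\wedge m}=m!\sum_I\Pf(C_I)e_I$ and compare coefficients directly.
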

Proposition~\ref{prop:CBForPfaffians} will be important to handle matrix integrals in Section~\ref{sec:BGW}. For the rest of Section~\ref{sec:BKP} we will only need an elementary special case, namely the fact that for matrices $A$ and $B$ of the same size, one has 
\begin{align}\label{eq:simpleCBForPfaffians}
	\det(B)\Pf(A) = \Pf(BAB^t).
\end{align}

The following theorem shows that the inverse dimension $1/o_\lambda(1^{2n})$ has a Pfaffian structure. We write it in terms of the coefficient $a_\lambda(n)$ previously introduced.
\begin{theorem}[Dimension of Orthogonal representations and Pfaffians]
        \label{theo:Pfaffian}
        Let $n$ be an integer, and suppose that $\ell(\la) \leq n$.
	Then we have
\begin{align}\label{eq:alambdaPfaffian}
a_\lambda(n) &= \frac{1}{\hook_\lambda^2\cdot o_\lambda(1^{2n})} = \begin{cases} \displaystyle\prod_{k=1}^{n-1}(2k)! \cdot \Pf(a_{\lambda_i+n-i,\lambda_j+n-j})_{1 \leq i,j \leq n}\ \text{ for $n$ even,}\\
\displaystyle\prod_{k=1}^{n-1}(2k)! \cdot \Pf(a_{\lambda_i+n-i,\lambda_j+n-j})_{1 \leq i,j \leq n+1}\ \text{ for $n$ odd,} \end{cases}\\
\mbox{where }\ \ \ a_{i,j} &= \begin{cases} \frac{i-j}{4(i+j)\ i!^2 j!^2} &\mbox{ for }\  i,j\geq 1,\\
\frac{1}{2i!^2} &\mbox{ for }\  j\in \{-1,0\}, i > 0,\\
1 &\mbox{ for }\ j=-1, i=0,\\
\frac{-1}{2j!^2} &\mbox{ for }\ i\in \{-1,0\}, j > 0,\\
-1 &\mbox{ for }\ i=-1, j=0,\\
{\scriptstyle{0}} &\mbox{ for }\  i,j \in \{-1,0\},i=j,\end{cases} \nonumber
\end{align}
and $\lambda_{n+1}=0$ by convention.
\end{theorem}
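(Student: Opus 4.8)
The plan is to deduce the Pfaffian formula~\eqref{eq:alambdaPfaffian} from Schur's Pfaffian identity (Lemma~\ref{lem:SchurPfaffian}) by a diagonal rescaling, combining the elementary identity~\eqref{eq:simpleCBForPfaffians} with the explicit product formulas~\eqref{eq:a_la}--\eqref{eq:a_la'} for $a_\lambda(n)$. I would first set $\mu_i:=\lambda_i+n-i$ for $1\le i\le n$, together with $\mu_{n+1}:=-1$ when $n$ is odd, and put $m:=n$ if $n$ is even and $m:=n+1$ if $n$ is odd, so that $m$ is even and the goal is to compute $\Pf(A)$ for $A=(a_{\mu_i,\mu_j})_{1\le i,j\le m}$. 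The integers $\mu_1>\cdots>\mu_n$ are strictly decreasing and nonnegative, with $\mu_n=0$ precisely when $\ell(\lambda)<n$, while $\mu=-1$ occurs only for the extra index when $n$ is odd; thus at most one $\mu_i$ equals $0$ and at most one equals $-1$. I will call these (at most two) indices \emph{special} and write $r$ for the number of non-special indices.

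The key step is to factor $A=SMS$, where $S=\operatorname{diag}(s_1,\dots,s_m)$ with $s_i=\tfrac{1}{2(\mu_i!)^2}$ if $\mu_i\ge1$ and $s_i=1$ at the special indices, and where $M$ is the skew-symmetric matrix attached by Lemma~\ref{lem:SchurPfaffian} to the tuple $(\tilde\mu_1,\dots,\tilde\mu_m)$ obtained from $(\mu_1,\dots,\mu_m)$ by replacing each special value ($0$ or $-1$) by $0$, the index coming from $\mu=0$ being placed before the one coming from $\mu=-1$ so as to match the sign convention of the Lemma. Verifying $A=SMS$ is a routine case check according to whether $i$ and/or $j$ is special: for $\mu_i,\mu_j\ge1$ one has $a_{\mu_i,\mu_j}=s_is_j\cdot\tfrac{\mu_i-\mu_j}{\mu_i+\mu_j}=(SMS)_{ij}$ since $s_is_j=\tfrac{1}{4\mu_i!^2\mu_j!^2}$; if exactly one index is special, the corresponding entry of $M$ equals $\pm\tfrac{\mu-0}{\mu+0}=\pm1$, yielding $\pm\tfrac{1}{2(\mu!)^2}$; if both indices are special, $s_is_j=1$ and the entry of $M$ is $\pm1$ by the $0/0$ convention, yielding $a_{0,-1}=1$ and $a_{-1,0}=-1$; and all diagonal entries vanish on both sides.

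Then by~\eqref{eq:simpleCBForPfaffians} one has $\Pf(A)=\Pf(SMS)=\det(S)\,\Pf(M)$, while Schur's identity (Lemma~\ref{lem:SchurPfaffian}) gives $\Pf(M)=\prod_{1\le i<j\le m}\tfrac{\tilde\mu_i-\tilde\mu_j}{\tilde\mu_i+\tilde\mu_j}$, which collapses to the product of $\tfrac{\mu_i-\mu_j}{\mu_i+\mu_j}$ over non-special pairs $i<j$ (each factor involving a special index being $1$); and $\det(S)=2^{-r}\prod_{\mu_i\ge1}(\mu_i!)^{-2}$. It then remains to compare with the explicit formula for $a_\lambda(n)$: when $\ell(\lambda)=n$ I would use~\eqref{eq:a_la} with $k=n$, in which $(\rho_i+n)!(\rho_i+n-1)!(\rho_i+n)=(\mu_i!)^2$; when $\ell(\lambda)<n$ I would use~\eqref{eq:a_la'}, whose leading factor $2$ is exactly cancelled by the $\tfrac12$ produced by the $i=n$ term $\tfrac{(2n-2n)!}{2\cdot0!^2}$, the factors $\tfrac{\mu_i-\mu_n}{\mu_i+\mu_n}$ being all $1$. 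In each of the four cases (the parity of $n$ times the alternative $\ell(\lambda)=n$ or $\ell(\lambda)<n$) the quotient $a_\lambda(n)/\Pf(A)$ works out to $\prod_{k=1}^{n-1}(2k)!$, which is~\eqref{eq:alambdaPfaffian}.

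The hard part will be nothing conceptual but rather the bookkeeping around the special rows and columns: establishing the factorization $A=SMS$ there, where the generic formula $\tfrac{i-j}{4(i+j)i!^2j!^2}$ breaks down and $(-1)!$ is not even defined, and applying Schur's identity correctly when two of the shifted parts vanish — the case $n$ odd with $\ell(\lambda)<n$ — which needs the multi-zero reading of the $0/0$ convention in Lemma~\ref{lem:SchurPfaffian}. The rest is a matching of two explicit products, together with the bookkeeping of the factor-$2$ discrepancy between~\eqref{eq:a_la} and~\eqref{eq:a_la'}.
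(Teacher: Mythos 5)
Your proposal is correct and follows essentially the same route as the paper's proof: both combine the product formulas~\eqref{eq:a_la}--\eqref{eq:a_la'} with Schur's Pfaffian identity (Lemma~\ref{lem:SchurPfaffian}) and the conjugation identity~\eqref{eq:simpleCBForPfaffians} by a (near-)diagonal matrix of factorials, with the same four-case analysis and the same absorption of the extra factor $2$ and of the special indices $0$ and $-1$. The only difference is directional bookkeeping — you factor $A=SMS$ starting from the target Pfaffian, while the paper builds the Pfaffian starting from the product formula — which is not a substantive change.
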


    \begin{proof}
There are four cases to analyze, depending on whether $\ell(\lambda)<n$ or $\ell(\lambda)=n$ and whether $n$ is even or odd. In this proof we denote 
\begin{equation*}
x_i=\rho_i+n = \lambda_i+n-i\qquad \forall i=1,\dotsc, n,
\end{equation*}
and $x_{n+1}:=0$ as in \cref{lem:SchurPfaffian}.

Let $V_{ij} = \delta_{ij}/(2x_i!^2)$ for $1\leq i,j\leq n$ and $V_{n+1, j} = V_{j, n+1} = \delta_{j,n+1}$. In particular
\begin{equation*}
\det (V_{ij})_{1\leq i,j\leq n} = \prod_{i=1}^n \frac{1}{2\ x_i!^2},\qquad \text{and}\qquad \det (V_{ij})_{1\leq i,j\leq n+1} = \det (V_{ij})_{1\leq i,j\leq n}.
\end{equation*}

\begin{itemize}[itemsep=0pt, topsep=0pt,parsep=0pt, leftmargin=12pt]
\item Assume first that $\ell(\la)=n$ and $n$ is even. Using \eqref{eq:a_la} and applying the Schur-Pfaffian identity \eqref{eq:SchurPfaffian} with $x_i=\rho_i+n = \lambda_i+n-i$,  we obtain
      \begin{equation*}
        a_\la(n) = \prod_{k=1}^{n-1} (2k)! \det(V_{ij})_{1\leq i,j\leq n}\ \Pf\left(\frac{x_i-x_j}{x_i+x_j}\right)_{1 \leq i,j \leq n}.
      \end{equation*}
		We conlude by using the identity~\eqref{eq:simpleCBForPfaffians} for $B=(V_{ij})_{1
  \leq i,j \leq n}$ and $A=\left(\frac{x_i-x_j}{x_i+x_j}\right)_{1
  \leq i,j \leq n}$. It gives
\begin{align*}
a_\la(n) &= \prod_{k=1}^{n-1} (2k)!\ \Pf\left(\frac{x_i-x_j}{x_i+x_j} \frac{1}{4\,x_i!^2 x_j!^2}\right)_{1 \leq i,j \leq n}\\
&= \prod_{k=1}^{n-1} (2k)!\ \Pf(a_{\lambda_i+n-i, \lambda_j+n-j})_{1 \leq i,j \leq n}
\end{align*}
\item Assume that $\ell(\la) < n$ and $n$ is even, so that $x_n=0$. Using \eqref{eq:a_la'} and applying the Schur-Pfaffian identity \eqref{eq:SchurPfaffian}, we obtain
      \begin{align*}
        a_\la(n) &= 2\prod_{k=1}^{n-1} (2k)!\ \Pf\left(\frac{x_i-x_j}{x_i+x_j} \frac{1}{4\,x_i!^2 x_j!^2}\right)_{1 \leq i,j \leq n}\\
&= \prod_{k=1}^{n-1} (2k)!\ \Pf(a_{\lambda_i+n-i, \lambda_j+n-j})_{1 \leq i,j \leq n},
      \end{align*}
with the convention that $a_{0i} = -a_{i0} = 1/(2i!^2)$ and
$a_{00}=0$. The second line is obtained by re-applying the identity
$\det(B)\Pf(A) = \Pf(BAB^t)$ with $B_{ij} = \delta_{ij}/(2x_i!^2)$ for
$1\leq i,j\leq n-1$ and $B_{in} = B_{ni} = \delta_{i,n}$ and $A=\left(\frac{x_i-x_j}{x_i+x_j}\right)_{1
  \leq i,j \leq n}$. The effect is to multiply the coefficients of the
last column and of the last row of $\left(\frac{x_i-x_j}{x_i+x_j}
  \frac{1}{4\,x_i!^2 x_j!^2}\right)_{1 \leq i,j \leq n}$ by $2$. Since
$x_n=0$, we find $(BAB^t)_{in} = - (BAB^t)_{ni} = \delta_{i\neq n}/(2x_i!^2)$.
\item Assume that $\ell(\la)=n$ with $n$ odd. Using \eqref{eq:a_la} and applying the Schur-Pfaffian identity \eqref{eq:SchurPfaffian} for $n$ odd, we obtain
\begin{align*}
a_\la(n) &= \prod_{k=1}^{n-1} (2k)! \det(V_{ij})_{1\leq i,j\leq n+1}\ \Pf\left(\frac{x_i-x_j}{x_i+x_j}\right)_{1 \leq i,j \leq n+1},\\
&= \prod_{k=1}^{n-1} (2k)!\ \Pf(a_{\lambda_i+n-i, \lambda_j+n-j})_{1 \leq i,j \leq n+1},
\end{align*}
with the convention $a_{i,-1} = -a_{-1,i} = 1/(2i!^2)$ and
$a_{-1,-1}=0$. Indeed, the second line is obtained via $\det(B)\Pf(A)
= \Pf(BAB^t)$ with $B=(V_{ij})_{1 \leq i,j \leq n+1}$ and $A
=\left(\frac{x_i-x_j}{x_i+x_j}\right)_{1 \leq i,j \leq n+1}$, so that
the entries on the last column and on the last row are
$(BAB^t)_{i,n+1} = -(BAB^t)_{n+1,i} = \delta_{i\neq n+1}/(2x_i!^2)$.
\item Assume that $\ell(\la)<n$ with $n$ odd, so that $x_n=x_{n+1}=0$. Notice that all elements $a_{ij}$ for $i,j\geq -1$ have been fixed using the three previous cases, except for $a_{0,-1} = -a_{-1,0}$. Using \eqref{eq:a_la'} and applying the Schur-Pfaffian identity \eqref{eq:SchurPfaffian} for $n$ odd, we obtain
\begin{align*}
a_\la(n) &= 2\prod_{k=1}^{n-1} (2k)! \det(V_{ij})_{1\leq i,j\leq n+1}\ \Pf\left(\frac{x_i-x_j}{x_i+x_j}\right)_{1 \leq i,j \leq n+1},\\
&= \prod_{k=1}^{n-1} (2k)! \det(V'_{ij})_{1\leq i,j\leq n+1}\ \Pf\left(\frac{x_i-x_j}{x_i+x_j}\right)_{1 \leq i,j \leq n+1},\\
&= \prod_{k=1}^{n-1} (2k)!\ \Pf(a_{\lambda_i+n-i, \lambda_j+n-j})_{1 \leq i,j \leq n+1},
\end{align*}
with $a_{0,-1} = -a_{-1,0} =1$. Here $V'_{ij} = V_{ij}$ for all $1\leq i,j\leq n+1$ except $V'_{in} = V'_{ni} = 2V_{in} = \delta_{i,n}$.
\end{itemize}
\end{proof}

\subsection{Boson-fermion correspondence and proof of \cref{lemma:virasoroDelayed}}
\label{sec:bosonfermion}
We start by quickly reviewing background material on the half-infinite wedge and the boson-fermion correspondence, \cite{Kac2013Bombay, AlexandrovZabrodin2013}. Denote $V = \bigoplus_{i\in\mathbb{Z}} \C e_i$ the infinite-dimensional complex vector space spanned by linearly independent vectors $e_i$s. For $n\in \mathbb{Z}$, the \emph{half-infinite wedge of charge $n$} is the vector space, denoted $\bigwedge^{\frac{\infty}{2}}_n V$, spanned by the vectors
\begin{equation}\label{eq:defv}
v_{(i_0, i_{-1}, \dotsc)} = e_{i_0} \wedge e_{i_{-1}} \wedge \dotsb,
\end{equation}
where $i_0>i_{-1}> \dotsb $ and there exists $K$ such that for all $k\leq K$, $i_k = k+n$. The half-infinite wedge is then $\bigwedge^{\frac{\infty}{2}} V := \bigoplus_{n\in\mathbb{Z}} \bigwedge^{\frac{\infty}{2}}_n V$. The vacuum of charge $n$ is the vector 
\begin{equation}
|n\rangle := v_{(n, n-1, n-2, \dotsc)}.
\end{equation}

For $i\in\mathbb{Z}$, we denote $\psi_i$ the wedging operator,
\begin{equation}
  \label{eq:wedge}
\psi_i v = e_i\wedge v
\end{equation}
and $\psi^*_i$ the contracting operator defined by its action on basis
\begin{equation}
  \label{eq:contract}
\psi^*_i v_{(i_n, i_{n-1}, \dotsc)} = \sum_{j\leq n} (-1)^{n-j} \delta_{i,i_j} e_{i_n}\wedge \dotsb\wedge e_{i_{j+1}}\wedge e_{i_{j-1}}\wedge \dotsb
\end{equation}
(the sum contains a single non-vanishing term) and extended by
linearity on $\bigwedge^{\frac{\infty}{2}} V$. For positive charges $n$, the vacuum can thus be written $|n\rangle = \psi_{n} \psi_{n-1} \dotsm \psi_1|0\rangle$.

These operators satisfy 
\begin{equation}
\psi_i\psi_j^* + \psi^*_j \psi_i = \delta_{ij}.
\end{equation}
We also introduce $\phi_0$ which acts as the identity on vectors of
even charges and minus the identity on vectors of odd charges. Altogether, the wedging and contracting operators and $\phi_0$ provide a representation of a Clifford algebra on the half-infinite wedge.

For $r\in\mathbb{Z}$, define $\mathcal{J}_r = \sum_{i\in\mathbb{Z}}
:\psi_i \psi^*_{i+r}\colon$, where the colons denote the normal
ordering defined as $:\psi_i \psi^*_{j}\colon = \psi_i \psi^*_{j} -
\delta_{i\leq 0}\delta_{i,j}$. They form a Heisenberg algebra with commutators $[\mathcal{J}_n, \mathcal{J}_m] = n\delta_{n,-m}$.

\begin{proposition} [Boson-fermion correspondence, see~\cite{Kac2013Bombay, AlexandrovZabrodin2013}]
The boson-fermion correspondence is an isomorphism between the half-infinite wedge and $\C[q,q^{-1},p_1, p_2, \dotsc]$. Explicitly, it associates to a vector $|v\rangle$ a polynomial
\begin{equation}
\tau_v = \sum_{n\in\mathbb{Z}} q^n \langle n|\Gamma_+(\pp) |v\rangle,
\end{equation}
where $\Gamma_+(\pp) := e^{\sum_{i>0} \frac{p_i}{i} \mathcal{J}_i}$. 
Moreover, the boson-fermion correspondence turns the above representations of the Heisenberg and Virasoro algebras on the half-infinite wedge into representations over $\C[q,q^{-1},p_1, p_2, \dotsc]$, defined by
\begin{equation} \label{Equivariance}
J_r \tau_v = \tau_{\mathcal{J}_r v}
\end{equation}
and given explicitly by
\begin{equation} \label{CurrentsVirasoro}
J_r = \begin{cases} p_r^* \quad &\text{for $r>0$}\\
p_{-r} \quad &\text{for $r<0$}\\
q\frac{d}{dq} \quad &\text{for $r=0$.} \end{cases}
\end{equation}
\end{proposition}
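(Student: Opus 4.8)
The statement is classical and is quoted from \cite{Kac2013Bombay, AlexandrovZabrodin2013}; the plan below indicates how it is proved. The first step is to check that, for each $n\in\Z$, the charge-$n$ subspace $\bigwedge^{\frac{\infty}{2}}_n V$ is an irreducible highest-weight module over the Heisenberg algebra generated by the currents $\mathcal{J}_r$, $r\neq 0$, with highest-weight vector $|n\rangle$. That $\mathcal{J}_r|n\rangle=0$ for $r>0$ is immediate from the definitions \eqref{eq:wedge}--\eqref{eq:contract} of $\psi_i,\psi^*_i$ together with the normal ordering. Irreducibility is equivalent to a character identity: grading $\bigwedge^{\frac{\infty}{2}}_n V$ by energy (the number of boxes of the partition encoding a basis vector $v_{(i_0,i_{-1},\dotsc)}$ relative to $|n\rangle$), its graded dimension is $\prod_{k\geq 1}(1-x^k)^{-1}$, which is also the graded dimension of $\C[p_1,p_2,\dotsc]$ with $\deg p_k=k$; this is the Jacobi triple product in disguise. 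By the Stone--von Neumann uniqueness of the bosonic Fock representation there is then a unique isomorphism of Heisenberg modules $\bigwedge^{\frac{\infty}{2}}_n V\xrightarrow{\sim}\C[p_1,p_2,\dotsc]$ sending $|n\rangle\mapsto 1$, under which $\mathcal{J}_{-k}$ acts by multiplication by $p_k$ and $\mathcal{J}_k$ by $p_k^*=k\,\partial/\partial p_k$ for $k>0$; summing over $n$ and recording the charge with the variable $q$ yields the claimed isomorphism onto $\C[q,q^{-1},p_1,p_2,\dotsc]$.

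The second step is to identify this abstract isomorphism with the explicit map $|v\rangle\mapsto\tau_v=\sum_n q^n\langle n|\Gamma_+(\pp)|v\rangle$. It is well-defined in $\C[q,q^{-1},p_1,p_2,\dotsc]$ because, for a fixed $v$, the operators $\mathcal{J}_i$ with $i>0$ strictly lower the energy, so only finitely many powers of $\Gamma_+(\pp)$ contribute to $\langle n|\Gamma_+(\pp)|v\rangle$ in each degree. For the equivariance \eqref{Equivariance} one commutes the currents past $\Gamma_+(\pp)$ using only the Heisenberg relations $[\mathcal{J}_a,\mathcal{J}_b]=a\,\delta_{a+b,0}$: for $r>0$ the operator $\mathcal{J}_r$ commutes with $\Gamma_+(\pp)$ and satisfies $\mathcal{J}_r\Gamma_+(\pp)=r\,\partial_{p_r}\Gamma_+(\pp)$, so $\langle n|\Gamma_+(\pp)\mathcal{J}_r|v\rangle=p_r^*\langle n|\Gamma_+(\pp)|v\rangle$; for $r>0$ one has $\Gamma_+(\pp)\mathcal{J}_{-r}=(\mathcal{J}_{-r}+p_r)\Gamma_+(\pp)$ with $\langle n|\mathcal{J}_{-r}=0$, whence $\langle n|\Gamma_+(\pp)\mathcal{J}_{-r}|v\rangle=p_r\langle n|\Gamma_+(\pp)|v\rangle$; and $\mathcal{J}_0$, the charge operator, commutes with $\Gamma_+(\pp)$ and acts by $n$ on charge-$n$ vectors, giving $q\frac{d}{dq}$. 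This proves \eqref{CurrentsVirasoro}. Since the map sends $|n\rangle\mapsto q^n$ and intertwines the Heisenberg actions, it must coincide with the isomorphism of the first paragraph; in particular it is bijective.

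Finally, the Virasoro part requires no separate argument: on both the fermionic and the bosonic side the Virasoro generators are the same normal-ordered quadratic (Sugawara) expressions in the respective currents $\mathcal{J}_r$ and $J_r$, so Heisenberg-equivariance of $v\mapsto\tau_v$ upgrades automatically to Virasoro-equivariance. The only genuine obstacle is the bijectivity in the first paragraph, i.e.\ the Stone--von Neumann uniqueness input, which ultimately rests on the Jacobi triple product matching the fermionic and bosonic characters; being standard, this is deferred to the cited references, and only the normalizations in \eqref{CurrentsVirasoro} that are used later are spelled out.
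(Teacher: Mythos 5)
The paper does not actually prove this Proposition: it is quoted as a classical result with citations to Kac's Bombay lectures and Alexandrov--Zabrodin, and only the subsequent Lemma~\ref{lemma:virasoroBF} (the explicit fermionic form of the Virasoro generators) is proved in the text. Your sketch is essentially the standard proof from those references, and the parts you do spell out are correct: the equivariance computations are right, since $\mathcal{J}_r$ for $r>0$ commutes with $\Gamma_+(\pp)$ and $r\,\partial_{p_r}\Gamma_+(\pp)=\Gamma_+(\pp)\mathcal{J}_r$, while $\Gamma_+(\pp)\mathcal{J}_{-r}=(\mathcal{J}_{-r}+p_r)\Gamma_+(\pp)$ with $\langle n|\mathcal{J}_{-r}=0$, and $\mathcal{J}_0$ contributes $q\frac{d}{dq}$; moreover $\tau_{|n\rangle}=q^n$, so agreement with the abstract Heisenberg-module isomorphism (hence bijectivity) follows once each charge subspace is known to be an irreducible Fock module, which you rightly defer to the uniqueness/character argument in the references.

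Two small caveats. First, for a \emph{fixed} charge $n$ the character identity is just the bijection between semi-infinite monomials and partitions, so it is elementary counting; the Jacobi triple product only appears when one sums over all charges, so that parenthetical is slightly misplaced (though harmless). Second, your claim that the Virasoro part ``requires no separate argument'' is accurate only if one \emph{defines} the fermionic Virasoro generators by the Sugawara expression $\tfrac12\sum_{k\geq 0}\mathcal{J}_{r-k}\mathcal{J}_k+\tfrac12\sum_{k\geq 1}\mathcal{J}_{-k}\mathcal{J}_{r+k}$; identifying that expression with the explicit normally ordered bilinear $\mathcal{L}_r=\sum_i\bigl(i+\tfrac{r-1}{2}\bigr):\!\psi_i\psi^*_{i+r}\!:$ is a genuine computation, which is exactly what the paper carries out in the proof of Lemma~\ref{lemma:virasoroBF}. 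Since that identification is stated as a separate lemma and not as part of this Proposition, your deferral is acceptable, but it should not be presented as automatic if the explicit formula \eqref{FermionicVirasoro} is what one ultimately uses.
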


Note that one can straightforwardly extend this formalism to formal power series. Namely, the boson-fermion correspondence still holds as  an isomorphism between $\C[[q,q^{-1},p_1, p_2,\dotsc]]$ and the space of formal (infinite) linear combinations of $\bigwedge^{\frac{\infty}{2}} V$. Relations \eqref{Equivariance} and \eqref{CurrentsVirasoro} still hold.

The next lemma says that the Virasoro constraints transform nicely under the boson-fermion correspondence. Define the operators on the half-infinite wedge defined by
\begin{equation} \label{FermionicVirasoro}
\mathcal{L}_r = \sum_{i\in\mathbb{Z}} \Bigl(i + \frac{r-1}{2}\Bigr) :\psi_i \psi_{i+r}^*\colon
\end{equation}
They satisfy $[\mathcal{L}_n, \mathcal{L}_m] = (n-m) \mathcal{L}_{n+m} + \frac{1}{12}(n^3-n) \delta_{n,-m}$ and generate a Virasoro algebra with central charge 1.
\begin{lemma}\label{lemma:virasoroBF}
	The boson-fermion correspondence transforms the operators $\mathcal{L}_r$ into operators over $\C[[q,q^{-1}]][p_1, p_2, \dotsc]$ given by
\begin{equation} \label{defLi}
L_r = \begin{cases} q\frac{d}{dq} p_r^* + \frac{1}{2}\sum_{m+n=r} p_m^* p^*_{n} + \sum_{l\geq 1} p_l p_{l+r}^* \qquad &\text{for $r>0$}\\
q\frac{d}{dq} p_r  + \frac{1}{2}\sum_{m+n=-r} p_m p_{n} + \sum_{l\geq 1} p_{l-r} p_{l}^* \qquad &\text{for $r<0$}\\
\frac{1}{2}\bigl(q\frac{d}{dq}\bigr)^2 + \sum_{l\geq 1} p_l p_l^* \qquad &\text{for $r=0$}\end{cases}
\end{equation}
\end{lemma}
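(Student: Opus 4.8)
The plan is to derive the lemma from two standard ingredients: the \emph{Sugawara} bosonization of the Virasoro modes on the half--infinite wedge, and the equivariance of the boson--fermion correspondence recorded in \eqref{Equivariance}. The first step is to establish, as an identity of operators on $\bigwedge^{\frac{\infty}{2}}V$,
\begin{equation}\label{eq:sugawaraFermion}
  \mathcal{L}_r \;=\; \tfrac12\sum_{k\in\mathbb{Z}}:\mathcal{J}_k\,\mathcal{J}_{r-k}:\,,
\end{equation}
where on the right the normal ordering moves the annihilation modes $\mathcal{J}_{\geq 1}$ to the right, so that it is non-trivial only for the self--conjugate term $k=0$, which occurs when $r=0$. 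This is classical: it says that $\{\mathcal{L}_r\}$ is the image of the stress--energy tensor of a free charged fermion under the current $\mathcal{J}(z)=\sum_r\mathcal{J}_rz^{-r-1}$ (see e.g.~\cite{Kac2013Bombay, AlexandrovZabrodin2013}). If a self-contained argument is preferred, \eqref{eq:sugawaraFermion} can be checked directly by substituting $\mathcal{J}_k=\sum_{i\in\mathbb{Z}}:\psi_i\psi^*_{i+k}:$ into the right-hand side, using Wick's theorem to rewrite the product of the two normal-ordered quadratics as a combination of terms $:\psi_i\psi^*_j:$ plus a scalar, and comparing the coefficient of $:\psi_i\psi^*_{i+r}:$ with the weight $i+\tfrac{r-1}{2}$ in the definition \eqref{FermionicVirasoro}; the prescription $:\psi_i\psi^*_j:=\psi_i\psi^*_j-\delta_{i\leq 0}\delta_{i,j}$ is exactly what makes the additive constants cancel on both sides.

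Granting \eqref{eq:sugawaraFermion}, apply the correspondence $v\mapsto\tau_v$ and use \eqref{Equivariance} twice, i.e.\ $\tau_{\mathcal{J}_k\mathcal{J}_{r-k}v}=J_kJ_{r-k}\tau_v$; since $\{\tau_v\}$ spans the polynomial space and, applied to a fixed polynomial, only finitely many summands act nontrivially, \eqref{eq:sugawaraFermion} becomes the operator identity
\begin{equation}\label{eq:sugawaraBoson}
  L_r \;=\; \tfrac12\sum_{k\in\mathbb{Z}}:J_k\,J_{r-k}:\,
\end{equation}
on $\C[[q,q^{-1}]][p_1,p_2,\dots]$, with $J_k$ read off from \eqref{CurrentsVirasoro} and bosonic normal ordering ($J_{>0}$ to the right). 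It then remains to expand \eqref{eq:sugawaraBoson} by splitting the range of $k$ into the three regimes of \eqref{CurrentsVirasoro}. For $r>0$: the terms $k\in\{0,r\}$ give $q\frac{d}{dq}\,p_r^*$ (these modes commute, so there is no ordering issue); the terms $0<k<r$ give $\tfrac12\sum_{m+n=r}p_m^*p^*_n$; and the terms with $k<0$ together with those with $k>r$ give $\sum_{l\geq 1}p_lp^*_{l+r}$ after reindexing by $l=-k$ and $l=k-r$. The only commutators met while normal-ordering are $[p_k^*,p_{k-r}]$, which vanish for $r\neq 0$, so nothing extra appears. The case $r<0$ is identical after $r\mapsto -r$, and $r=0$ is the unique case where the normal ordering in \eqref{eq:sugawaraBoson} actually matters, yielding $\tfrac12\bigl(q\frac{d}{dq}\bigr)^2+\sum_{l\geq 1}p_lp_l^*$. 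Matching with \eqref{defLi} concludes.

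The step requiring genuine care is \eqref{eq:sugawaraFermion} with the right normalization: one must make sure the linear weight $i+\tfrac{r-1}{2}$ of \eqref{FermionicVirasoro} is \emph{exactly} the one produced by the Sugawara quadratic under the chosen normal-ordering convention, with no residual additive constant surviving (such a constant would shift $L_0$ by a scalar and break the agreement with \eqref{defLi}). Everything past \eqref{eq:sugawaraFermion} is bookkeeping with the three-branch formula \eqref{CurrentsVirasoro}.
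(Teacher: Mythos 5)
Your proof is correct and takes essentially the same route as the paper: the paper likewise identifies the Virasoro modes with the Sugawara current bilinear $\tfrac12\sum_{k}\mathcal{J}_k\mathcal{J}_{r-k}$ (written with annihilating modes on the right), transfers it through the equivariance \eqref{Equivariance}, and performs exactly the Wick-theorem computation you sketch (quartic and constant terms vanish, the quadratic terms produce the weight $i+\tfrac{r-1}{2}$ of \eqref{FermionicVirasoro}), the only difference being that you run the argument from the fermionic side toward the bosonic expansion \eqref{defLi} while the paper starts from \eqref{defLi} and lands on \eqref{FermionicVirasoro}. The one place where the paper does more is that it writes out this Wick computation in full rather than citing it as classical, but your sketch describes precisely that calculation.
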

This is a classical result in the representation theory of the semi-infinite wedge. As it is of crucial importance to our main result, we provide a proof of this lemma. %

\begin{proof}
The proof below is directly borrowed from unpublished notes by Kac. The operators $\psi_k^*, \psi_{-k+1}$ annihilate the vacuum of charge $0$ for all $k>0$ and are called annihilators, while the operators $\psi_k^*, \psi_{-k+1}$ for $k\leq 0$ are called creators. Annihilators (respectively creators) anti-commute with one another.

The normal order of a product of $\psi_j$ and $\psi_j^*$ is the re-ordering of the operators such that all annihilators are on the right and all creators on the left. More formally, let $\phi^{(1)}_{k_1}, \dotsc, \phi^{(m)}_{k_m}$ be a set of operators such that $\phi^{(q)}_{k_q}\in \{\psi_{k_q}^*, \psi_{-k_q+1}\}$ for all $q=1, \dotsc, m$. Let $\sigma$ be a permutation on $\{1, \dotsc, m\}$ such that $k_{\sigma(1)}\leq \dotsb\leq k_{\sigma(m)}$. The normally ordered product is
\begin{equation*}
:\phi^{(1)}_{k_1} \dotsm \phi^{(m)}_{k_m}\colon = \sgn(\sigma)\
\phi^{(\sigma(1))}_{k_{\sigma(1)}} \dotsm
\phi^{(\sigma(m))}_{k_{\sigma(m)}}.
\end{equation*}
It can be checked that
\begin{equation*}
:\phi^{(1)} \phi^{(2)}\colon = \phi^{(1)} \phi^{(2)} - \langle 0|\phi^{(1)} \phi^{(2)}|0\rangle
\end{equation*}
and \cite{AlexandrovZabrodin2013}
\begin{multline*}
:\phi^{(1)} \phi^{(2)} \phi^{(3)} \phi^{(4)}\colon = \phi^{(1)} \phi^{(2)} \phi^{(3)} \phi^{(4)} \\
- \langle 0| \phi^{(3)} \phi^{(4)} |0\rangle \phi^{(1)} \phi^{(2)} + \langle 0| \phi^{(2)} \phi^{(4)} |0\rangle \phi^{(1)} \phi^{(3)} - \langle 0| \phi^{(2)} \phi^{(3)} |0\rangle \phi^{(1)} \phi^{(4)} \\
- \langle 0| \phi^{(1)} \phi^{(2)} |0\rangle \phi^{(3)} \phi^{(4)} + \langle 0| \phi^{(1)} \phi^{(3)} |0\rangle \phi^{(2)} \phi^{(4)} - \langle 0| \phi^{(1)} \phi^{(4)} |0\rangle \phi^{(2)} \phi^{(3)} \\
+ \langle 0| \phi^{(3)} \phi^{(4)} |0\rangle \langle 0|\phi^{(1)} \phi^{(2)}|0\rangle - \langle 0| \phi^{(1)} \phi^{(3)} |0\rangle \langle 0|\phi^{(2)} \phi^{(4)}|0\rangle \\
+ \langle 0| \phi^{(1)} \phi^{(4)} |0\rangle \langle 0|\phi^{(2)} \phi^{(3)}|0\rangle,
\end{multline*}
where $\phi^{(q)}$ is any of the operators $\psi_j, \psi_j^*$ for
$q=1, \dotsc, 4$. Altogether those relations imply that
\begin{multline} \label{ProductOfNormalOrdered}
:\phi^{(1)} \phi^{(2)}\colon\cdot:\phi^{(3)} \phi^{(4)}\colon =\ :\phi^{(1)} \phi^{(2)} \phi^{(3)} \phi^{(4)}\colon - \langle 0|\phi^{(2)} \phi^{(4)}|0\rangle :\phi^{(1)} \phi^{(3)}\colon \\
+ \langle 0|\phi^{(2)} \phi^{(3)}|0\rangle :\phi^{(1)} \phi^{(4)}\colon - \langle 0|\phi^{(1)} \phi^{(3)}|0\rangle :\phi^{(2)} \phi^{(4)}\colon + \langle 0|\phi^{(1)} \phi^{(4)}|0\rangle :\phi^{(2)} \phi^{(3)}\colon \\
- \langle 0|\phi^{(1)} \phi^{(3)}|0\rangle \langle 0|\phi^{(2)} \phi^{(4)}|0\rangle + \langle 0|\phi^{(1)} \phi^{(4)}|0\rangle \langle 0|\phi^{(2)} \phi^{(3)}|0\rangle.
\end{multline}

We now focus on the Virasoro operators $L_r$ given by \eqref{defLi}, which can be rewritten
\begin{equation*}
L_r = \frac{1}{2} \sum_{k\geq 0} J_{r-k} J_k + \frac{1}{2} \sum_{k\geq 1} J_{-k} J_{r+k}.
\end{equation*}
By the boson-fermion correspondence, they are mapped to 
\begin{equation} \label{QuarticVirasoro}
\begin{aligned}
&\frac{1}{2} \sum_{k\geq 0} \mathcal{J}_{r-k} \mathcal{J}_k + \frac{1}{2} \sum_{k\geq 1} \mathcal{J}_{-k} \mathcal{J}_{r+k}=\\
&\frac{1}{2} \sum_{k\geq 0} \sum_{i,j\in\Z} :\psi_i
\psi^*_{i+r-k}\colon\cdot :\psi_{j}\psi^*_{j+k}\colon + \frac{1}{2} \sum_{k\geq 1} \sum_{i,j\in\Z} :\psi_{i}\psi^*_{i-k}\colon\cdot:\psi_j \psi^*_{j+r+k}\colon
\end{aligned}
\end{equation}
Applying \eqref{ProductOfNormalOrdered}, we find three types of normally ordered terms: quartic, quadratic and constant contributions in $\psi_j, \psi^*_j$. The relevant expectations for this calculation are
\begin{equation*}
\langle 0| \psi_i \psi^*_j|0\rangle = \delta_{i\leq 0} \delta_{i,j},\qquad \langle 0|\psi^*_i \psi_j|0\rangle = \delta_{i>0} \delta_{i,j}
\end{equation*}
while $\langle 0| \psi^*_i \psi^*_j|0\rangle = \langle 0| \psi_i \psi_j|0\rangle = 0$ for all $i,j\in\Z$. The constant terms are immediately found to vanish. The quartic terms are
\begin{align*}
\sum_{k, i, j\in \Z} :\psi_i \psi^*_{i+r-k} \psi_j \psi^*_{j+k}\colon &\underset{k=l+i-j}{=} \sum_{l, i, j\in \Z} :\psi_i \psi^*_{j+r-l} \psi_j \psi^*_{i+l}\colon\\
& = - \sum_{l, i, j\in \Z} :\psi_j \psi^*_{j+r-l} \psi_i \psi^*_{i+l}\colon = 0.
\end{align*}
In the second equality, we use the fact that all operators anti-commute under the normal order. Only quadratic contributions remain and we are left with the quantity
\begin{align*}
&\begin{multlined}
\frac{1}{2} \sum_{\substack{k\geq 0\\ i,j\in\Z}} \Bigl(\langle 0|\psi^*_{i+r-k} \psi_j|0\rangle :\psi_i \psi^*_{j+k}\colon + \langle 0|\psi_i \psi^*_{j+k}|0\rangle :\psi^*_{i+r-k} \psi_j\colon\Bigr) \\
+ \frac{1}{2} \sum_{\substack{k\geq 1\\ i,j\in\Z}} \Bigl(\langle 0|\psi^*_{j-k} \psi_i|0\rangle :\psi_j \psi^*_{i+r+k}\colon + \langle 0|\psi_j \psi^*_{i+r-k}|0\rangle :\psi^*_{j+k} \psi_i\colon\Bigr)
\end{multlined}\\
&= \sum_{i\in\Z} \Bigl(i + \frac{r-1}{2}\Bigr) :\psi_i \psi^*_{i+r}\colon,
\end{align*}
which shows that \eqref{QuarticVirasoro} coincides with \eqref{FermionicVirasoro}.
\end{proof}

A basis of the half-infinite wedge of charge $n$ is given by vectors labeled by partitions. Denote $\lambda = (\lambda_1\geq \lambda_2 \geq \dotsb )$, and as before $\rho_j=\lambda_j-j$ for $j\geq 1$. The state of partition $\lambda$ is a semi-infinite monomial
\begin{equation*}
|n,\lambda\rangle = v_{(\rho_1+n+1, \rho_2+n+1, \dotsc)} = e_{\rho_1+n+1}\wedge e_{\rho_2+n+1}  \wedge\dotsm \wedge e_{\rho_j+n+1}\wedge \dotsm
\end{equation*}
A classical consequence of the boson-fermion correspondence is that those vectors are the image of the Schur symmetric functions, i.e.
\begin{equation} \label{PartitionMapstoSchur}
\langle n|\Gamma_+(\pp) |n, \la\rangle = s_\lambda(\pp).
\end{equation}
We now have all the ingredients to prove \cref{lemma:virasoroDelayed}.

\begin{proof}[Proof of \cref{lemma:virasoroDelayed}]
We first notice that \eqref{eq:VirasoroOnSchur} is equivalent
to
\begin{multline*}
	\biggl(n p_{r}^* + \frac{1}{2}\sum_{m+n=r}p_m^*p_n^*+ \sum_{l \geq 1}
               p_l p^*_{l+r}+\frac{(r-1)}{2}p_r^*-\frac{t}{2}\delta_{r,1}\biggr)s^{(\rho_1, \dots, \rho_k)}(\pp)
	     \\
=-\frac{t}{2}\delta_{r,1} s^{(\rho_1, \dots, \rho_k)}(\pp)
	     +
	     \sum_{i=1}^{k-r} \bigl(n+\rho_i\bigr) s^{(\rho_1, \dotsc, \rho_k)-r \epsilon_i}(\pp),
\end{multline*}
by making the change of variables $\mathbf{p/2} \to \pp$. It can be
further rewritten as
\begin{multline} \label{eq:VirasoroSchurRescaled}
\bigg(L_r + \frac{r-1}{2} p_r^* - \frac{t}{2} \delta_{r,1}\bigg) q^{n} s^{(\rho_1, \dots, \rho_k)}(\pp) =-\frac{t}{2}\delta_{r,1} q^n s^{(\rho_1, \dots, \rho_k)}(\pp)
	    \\ +
	     \sum_{i=1}^{k-r} \bigl(n+\rho_i\bigr) q^n s^{(\rho_1, \dotsc, \rho_k)-r \epsilon_i}(\pp)
\end{multline}
with the Virasoro generators $L_r$ given by \eqref{defLi} (note that
$r \geq 1$). Since by \eqref{PartitionMapstoSchur} Schur symmetric functions at charge $n$ are in
correspondence with the states $|n, \la\rangle$,
we can rewrite the LHS of 
\eqref{eq:VirasoroSchurRescaled} in its fermionic version via
the boson-fermion correspondence at charge $n$ given by \cref{lemma:virasoroBF}:
\begin{equation*}
\mathcal{L}_r^Z |n,\lambda\rangle, \qquad \text{with} \quad \mathcal{L}_r^Z = \mathcal{L}_r +\frac{r-1}{2}\mathcal{J}_r- \frac{t}{2} \delta_{r,1}.	
\end{equation*}
%

%
%
%
%
	By definition of the operators $\mathcal{L}_r, \mathcal{J}_r$ we find,
\begin{align*}
	\mathcal{L}_r^Z |n,\lambda\rangle &=- \frac{t}{2} \delta_{r,1}
  |n,\lambda\rangle 
  + 
  \sum_{j \in \mathbb{Z}}(j+r-1)
	\psi_j\psi^*_{j+r}|n,\lambda\rangle.
\end{align*}
	Now (using again $\rho_i=\lambda_i-i$) the quantity  
	$$\psi_j\psi^*_{j+r}|n,\lambda\rangle = \psi_j\psi_{j+r}^* e_{\rho_1+n+1}\wedge e_{\rho_2+n+1}\wedge \dotsc$$ 
	is nonzero only if there is an index $i$ such that $\rho_i+n+1=j+r$, and in this case it is equal to $$e_{\rho_1+n+1 - r\delta_{i,1}}\wedge e_{\rho_2+n+1-r\delta_{i,2}} \wedge \dotsc=e_{\rho^{(i)}_1+n+1}\wedge e_{\rho^{(i)}_2+n+1}\wedge \dotsc$$
    with $\rho^{(i)}=\rho-r\epsilon_i$.
	Therefore we have (note that necessarily $i\leq k-r$) 
\begin{align*}
	\mathcal{L}_r^Z |n,\lambda\rangle &=- \frac{t}{2} \delta_{r,1}
  |n,\lambda\rangle 
  + 
	\sum_{i=1}^{k-r}(\rho_i+n)e_{\rho^{(i)}_1+n+1}\wedge e_{\rho^{(i)}_2+n+1}\wedge \dotsc
	.
\end{align*}
	We conclude by applying the Boson-Fermion correspondence in the other direction (note that both  $e_{\rho_1+n+1}\wedge e_{\rho_2+n+1}\wedge\dotsc$ and the generalized Schur function $s^{\rho}$  are  antisymmetric in the coordinates of $\rho$, and therefore they are in correspondence even when $\rho$ is not in decreasing order).
\end{proof}

\subsection{The large BKP hierarchy and its ``formal $N$'' solutions}
\label{sec:formalNSolutions}
The following definition of the BKP hierarchy is borrowed from~\cite{KacVandeLeur1998,VandeLeur2001}. 

\begin{definition}[$b_\infty$ and BKP]\label{def:binfty}
The algebra $b_\infty$ (also called quadratic algebra, or spin
algebra) is the Lie algebra of quadratic expressions in the $\psi_i,
\psi^*_j$ and $\phi_0$, where $\psi_i,
\psi^*_j$ are defined by \eqref{eq:wedge}, \eqref{eq:contract} and $\phi_0$ acts as the identity on vectors of
even charges and minus the identity on vectors of odd charges. In
other words
\begin{equation}
b_\infty = \operatorname{Span}\{ \psi_i \psi_j, \psi_i \psi_j^*, \psi^*_i \psi^*_j, \phi_0\psi_i, \phi_0\psi_i^* ; i, j\in\mathbb{Z}\}.
\end{equation}
The exponential of an element of $b_\infty$ is well-defined on $\bigwedge^{\frac{\infty}{2}}V$ and $B_\infty:=\{e^h,h\in b_\infty\}$ is a group.

If a function $\tau \in \C[q,q^{-1}, p_1, p_2,\dotsc]$ is the image via the boson-fermion correspondence of a vector $v$ which lies in the orbit of the vacuum $|0\rangle$ under the action of $B_\infty$, then it is said to be a \emph{polynomial BKP tau function}. Namely, there exists $g\in B_\infty$ such that $v = g|0\rangle$ and
\begin{equation} \label{BKPtau function}
\tau = \tau_v = \sum_{n\in\mathbb{Z}} q^n \tau_{n,v} \qquad \text{with} \quad \tau_{n,v} = \langle n|\Gamma_+(\pp) g |0\rangle.
\end{equation}

		By an abuse of terminology we will also say that the sequence $(\tau_{n,v})_{n\geq 1}$ is a polynomial BKP tau function.
	\end{definition}

We will need to extend the notion of BKP tau functions in two ways. The first one, which is straightfoward, is to allow BKP tau functions  which are not necessarily polynomials. They can be obtained by using formula \eqref{BKPtau function} for 
\begin{equation}
g = \exp \bigg(\sum_{i,j\in\mathbb{Z}} b_{ij} \psi_i \psi_j + a_{ij} \psi_i^* \psi_j^* + c_{ij} \psi_i \psi_j^* + \sum_{k\in\mathbb{Z}} \alpha_k \phi_0 \psi_k^* + \beta_k \phi_0 \psi_k\bigg),
\end{equation}
with arbitrary coefficients $a_{ij}, b_{ij},c_{ij}, \alpha_k, \beta_k
\in \mathbb{C}$, as long as all matrix elements of $g$ are well-defined finite or formal sums.

The other, more subtle, extension consists in allowing tau functions
in which the parameter $n$ is formal rather than an integer, which will be crucial for us. Indeed, the BKP hierarchy requires to evaluate the tau function at integer values of $n$, but our main function is \emph{not} defined for $u=1/(2n)$ with integer $n$, since its coefficients have poles. We thus need a certain regularization, which will be performed by considering truncations of the Schur function expansion. This will in turn allow for the definition of formal BKP solutions, i.e. functions which depend on a \emph{formal} parameter denoted $N$, instead of $n$ integer.
This requires some care since our main function can be made a tau function only after normalizing it by a certain function of $n$ (defined a priori for integer, non-formal, $n$).
In the context of matrix integrals, some of these subtleties are hidden in the fact that the underlying specialization of the variables $\pp$ to powersums of finitely many eigenvalues realize in some sense this truncation explicitly, see~\eqref{eq:matrixTrunc} in the next section. The next definition enables us to work at the formal level and without specializations.

\begin{definition}[Formal-$N$ normalized BKP tau function]\label{def:formalBKP}
	Let $\tau(N) \in \mathbb{Q}(N)[[p_1, p_2,\dotsc]]$ be a formal power series with coefficients which are rational functions of the formal parameter $N$.
	For each $L\in \mathbb{N}$, consider the following truncation
        of the Schur-function expansion of $\tau(N)$, in which we only
        keep Schur functions corresponding to partitions with at most $L$ parts:
	$$
	\mathrm{Trunc}(\tau,L)(N) := 
	 [s_\cdot ^{\ell(\cdot\leq L)}] \tau(N) = \sum_{\mu: \ell(\mu)\leq L}  ( [s_\mu] \tau(N)) s_\mu. $$
	 Assume that there exists a sequence $(\alpha_K)_{K\geq 1}$ tending to infinity, such that for each integer value $K\in \mathbb{N}$ the coefficients of the function $\mathrm{Trunc}(\tau,\alpha_K)(N)$ have no pole at $N=i$ for any integer $i\geq K$. In particular, the function $\mathrm{Trunc}(\tau,\alpha_K)(K)$ is well defined.

	Moreover, assume that there exists a sequence $(\beta_N)$ such
        that the sequence 
	$$\beta_K \mathrm{Trunc}(\tau,\alpha_K)(K),  K\in\mathbb{Z} $$
	is a tau function of the BKP hierarchy and such that for all $N\geq 0$ and every odd positive integer $k$ one has
\begin{align}\label{eq:2factorial}
	\frac{\beta_{N-1}\beta_{N+k-1}}{\beta_{N}\beta_{N+k-2}} =R_k(N)
\end{align}
for some rational function $R_k(N)\in \mathbb{Q}(N)$ and
for all $N,k\geq 0$ one has
\begin{align}\label{eq:3factorial}
	\frac{\beta_{N-2}\beta_{N+k}}{\beta_{N}\beta_{N+k-2}} =S_k(N)
\end{align}
for some rational function $S_k(N)\in \mathbb{Q}(N)$.
Then we say that $\beta_N\tau(N)$ is a \emph{formal-$N$ BKP tau function}.
\end{definition}
In the last sentence, we slightly abuse notation by writing
``$\beta_N\tau(N)$'' since $\beta_N$ is a function on integers while
$\tau(N)$ is a formal object which is not well-defined on
integers. Being completely rigorous, it is the pair $((\beta_n)_{n\geq 1}, \tau(N))$ which encodes the data of the formal-$N$ tau function, but the notation we use should be clear enough  in what follows. We have

\begin{proposition}[BKP hierarchy~\cite{KacVandeLeur1998}, stated here for formal $N$]\label{prop:BKP}
	Let $\beta_N\tau(N)$ be a formal-$N$ BKP tau function. Then for $k\in\mathbb{N}, k\geq 1$
the following bilinear identity holds in  $\mathbb{C}(N)[\pp,\qq][[t]]$,
	\begin{multline} \label{BKP}
		\frac{1}{2}\bigl((-1)^k-1\bigr) \frac{\beta_{N-1} \beta_{N+k-1}}{\beta_N \beta_{N+k-2}} U(\qq) \tau(N-1)\cdot \tau(N+k-1) 
\\
		+ 
		\frac{\beta_{N-2} \beta_{N+k}}{\beta_{N} \beta_{N+k-2}} \sum_j h_j(2\qq) h_{j-k-1}(-\mathbf{\check{D}}) U(\qq) \tau(N-2) \cdot \tau(N+k) \\
		+ \sum_j h_j(-2\qq) h_{j+k-1}(\mathbf{\check{D}}) U(\qq) \tau(N) \cdot \tau(N+k-2) = 0
\end{multline}
	where $\qq = (q_1, q_2, \dots)$ is a vector of formal indeterminates. Here we use the following notation:
	\begin{itemize}[itemsep=0pt, topsep=0pt,parsep=0pt, leftmargin=12pt]
\item $h_j$ is the complete homogeneous symmetric function of degree $j$,
\item $\mathbf{\check{D}} = (k D_k)_{k\geq 1}$ where $D_r$ is the Hirota derivative with respect to $p_r$ defined as the following bilinear mapping
\begin{equation}
\bigl(f(p_r),g(p_r)\bigr) \ \mapsto\ D_r f\cdot g (p_r)= \frac{\partial}{\partial s_r} f(p_r + s_r) g(p_r - s_r)_{|s_r=0}
\end{equation}
\item $U(\qq) = e^{\sum_{r\geq 1} q_r D_r}$.
\end{itemize}
\end{proposition}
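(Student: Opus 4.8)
The plan is to reduce the formal-$N$ identity \eqref{BKP} to the classical (integer-charge) BKP bilinear identity, which the sequence $\sigma_K := \beta_K\,\mathrm{Trunc}(\tau,\alpha_K)(K)$ satisfies by the very definition of a formal-$N$ BKP tau function (Definition~\ref{def:formalBKP}) together with \cite{KacVandeLeur1998}. Concretely, for every integer charge $N$ that equation couples $\sigma_{N-2},\sigma_{N-1},\sigma_{N},\sigma_{N+k-2},\sigma_{N+k-1},\sigma_{N+k}$; since the operators $U(\qq)$, $h_j(\pm 2\qq)$ and $h_j(\pm\mathbf{\check{D}})$ act only on the $\pp$- and $\qq$-variables and not on the charge, substituting $\sigma_m=\beta_m\,T_m$ with $T_m:=\mathrm{Trunc}(\tau,\alpha_m)(m)$ and dividing through by $\beta_N\beta_{N+k-2}$ turns this relation into the left-hand side of \eqref{BKP}, with $\tau(m)$ replaced everywhere by $T_m$ and with the ratios $\frac{\beta_{N-1}\beta_{N+k-1}}{\beta_N\beta_{N+k-2}}$ and $\frac{\beta_{N-2}\beta_{N+k}}{\beta_N\beta_{N+k-2}}$ standing in the place of $R_k(N)$ and $S_k(N)$, which is precisely the content of \eqref{eq:2factorial} and \eqref{eq:3factorial} evaluated at the integer $N$. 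Thus, for every integer $N$, the ``truncated version'' of \eqref{BKP} holds.

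Next I would pass from the truncations $T_m$ back to $\tau$, degree by degree. Using the grading $\deg p_i=i$, the coefficient of $t^n$ on the left-hand side of \eqref{BKP} only involves the degree-$\le n$ parts of $\tau$: each Hirota derivative lowers the $\pp$-degree, so the sums $\sum_j$ and the exponential $U(\qq)=e^{\sum_{r\ge 1} q_r D_r}$ truncate to finite expressions at degree $n$, and no operation ever increases the number of parts of the underlying partitions, so that at degree $n$ only Schur functions $s_\mu$ with $\ell(\mu)\le n$ occur. Since $\alpha_K\to\infty$, for $N$ large enough (depending only on $n$, not on $k$) one has $\alpha_m\ge n$ for every $m\in\{N-2,\dots,N+k\}$, hence the truncation to $\le\alpha_m$ parts is invisible at degrees $\le n$ and $[t^a]T_m=([t^a]\tau(N))|_{N=m}$ for all $a\le n$; moreover these evaluations are legitimate because, by the no-pole hypothesis on $\mathrm{Trunc}(\tau,\alpha_m)$ in Definition~\ref{def:formalBKP}, the coefficients in question have no pole at $N=m$. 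Therefore, for all sufficiently large integers $N$, the degree-$n$ part of the left-hand side of \eqref{BKP}, evaluated at $N$, coincides with the truncated expression from the previous step, and hence vanishes.

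To conclude, I would invoke the standard rigidity principle. Every coefficient of the left-hand side of \eqref{BKP} --- at a fixed degree $n$ and of a fixed monomial in $\pp$ and $\qq$ --- is a rational function of $N$: the coefficients of $\tau(N)$, and hence those of its shifts $\tau(N+j)$ for $j\in\mathbb{Z}$ and of products of such, lie in $\mathbb{Q}(N)$; the operators $U(\qq)$, $h_j$ and $\mathbf{\check{D}}$ do not involve $N$; and $R_k,S_k\in\mathbb{Q}(N)$. A rational function of $N$ that vanishes at infinitely many integers is identically zero, so each such coefficient vanishes, and since $n$ and the monomial are arbitrary, \eqref{BKP} holds in $\mathbb{C}(N)[\pp,\qq][[t]]$.

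I expect the only genuinely delicate point to be the combinatorial bookkeeping of the middle step: one must check carefully that, at each fixed degree in $t$, all the analytic-looking operations in \eqref{BKP} (the exponential $U(\qq)$, the Hirota derivatives, the sums over $j$) truncate to finite expressions and, crucially, never produce partitions with more parts than those already present, so that one single threshold on $N$ makes all six truncations disappear at once and uniformly in $k$. Everything else is soft: the defining property of a BKP tau function, and the elementary fact that a rational function of $N$ with infinitely many integer zeros is zero.
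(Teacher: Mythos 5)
Your proposal is correct and follows essentially the same route as the paper's proof: apply the Kac--Van de Leur bilinear identity to the truncated tau functions $\beta_K\,\mathrm{Trunc}(\tau,\alpha_K)(K)$ at integer charges, observe that each extracted coefficient is a rational function of $N$ which agrees with the truncated identity for all sufficiently large integers $N$, and conclude by rigidity of rational functions vanishing at infinitely many integers. The extra bookkeeping you flag (finiteness of $U(\qq)$ and the Hirota sums at fixed $t$-degree, and the bound $\ell(\mu)\le|\mu|\le n$ making the truncation invisible) is exactly the ``$N$ large enough'' step the paper treats briefly, so there is no gap.
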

\begin{remark}
	Here it is crucial to note that for fixed $k$, the quantities
        $\bigl((-1)^k-1\bigr)\frac{\beta_{N-1} \beta_{N+k-1}}{\beta_N \beta_{N+k-2}}$ and
        $\frac{\beta_{N-2} \beta_{N+k}}{\beta_{N} \beta_{N+k-2}}$
        appearing in~\eqref{BKP} are elements of $\mathbb{Q}(N)$,
        because of the hypothesis~\eqref{eq:2factorial} and~\eqref{eq:3factorial}. Therefore~\eqref{BKP} is \emph{truly} a \emph{formal} identity satisfied by the (formal, unnormalized, non-truncated) generating function $\tau(N)$.
	A sufficient condition for~\eqref{eq:2factorial} and~\eqref{eq:3factorial} to hold is that $\beta_N\beta_{N+2}/\beta_{N+1}^2$ is a rational function in $N$. This will be the case for the main function studied in this paper, but not for the cases mentioned in Appendix~\ref{sec:otherModels}.
\end{remark}

Proposition~\ref{prop:BKP} provides an infinite list of partial
differential equations on the function $\tau(N)$ and its shifts by
looking at coefficients of the Taylor expansion of~\eqref{BKP} in the
variables $\qq$. For example, the ``first'' equation of this
hierarchy, often called \emph{the BKP-equation} is obtained by taking $k=2$ and extracting the coefficient of
$q_3$. It is given by
\begin{align}\label{eq:explicitBKP}
\sminus{}F_{3,1}(N) \splus{} F_{2,2}(N) \splus{} \tfrac{1}{2} F_{1,1}(N)^2 \splus{} \tfrac{1}{12} F_{1,1,1,1}(N)=
	\footnotesize{S_2(N)} \frac{\tau(N\!\sminus{}\!2) \tau(N\!\splus{}\!2)}{\tau(N)^2},
\end{align}
where we use the notation $F(N) = \log \tau(N)$ and $f_{i} = \frac{\partial f}{\partial p_i}$. Here we have used that $\beta(N+2)\beta(N-2)/\beta(N)^2 =S_2(N)$. 

\begin{proof}[Proof of Proposition~\ref{prop:BKP}]
	By definition of a formal-$N$ BKP tau function we know \sloppy
        that
        $\beta(N)\mathrm{Trunc}(\tau,\alpha_N)(N)$ is a BKP-tau
        function. Therefore, the fundamental result of Kac and Van de
        Leur~\cite{KacVandeLeur1998} asserts that~\eqref{BKP} holds true
        provided we replace $\tau(N)$ with
        $\mathrm{Trunc}(\tau,\alpha_N)(N)$.
	Now, fix two partitions  $\lambda$ and $\mu$, and consider the
        coefficient of $[p_\lambda q_\mu]$ in the equation thus
        obtained. This coefficient is a certain polynomial identity
        relating a finite number of coefficients of the functions
        $\mathrm{Trunc}(\tau,\alpha_m)(m)$ for $m\in \{N, N-1,N-2,
        N+k-2, N+k-1, N+k\}$. We recall that the coefficients of the
        function $\tau(\cdot)$ are rational functions in $N$. In
        particular, taking $N$ large enough (i.e. $N>N_0(\lambda, \mu,
        k)$) we can see that this polynomial identity is an
         identity between rational functions in $N$, which holds true
        for infinitely many values of $N$. Therefore this polynomial identity in fact holds by replacing each coefficient involved by the corresponding rational function. This is precisely what~\eqref{BKP} is saying. 
\end{proof}

\begin{corollary}
	Let 
	$F(N)=\sum_\lambda b_\lambda(N) s_\lambda t^{|\lambda|}$
	be a formal power series in $\mathbb{C}(N)[\pp][[t]]$ and assume that 
	there exists an infinite matrix $B=(B_{i,j})_{i,j\in \mathbb{Z}}$ such that for every integer $N$ and partition $\lambda$ such that $\ell(\lambda) \leq N$, one has
	\begin{align}\label{eq:pfaffianCoeff}
		\beta_N b_\lambda (N)=
		\begin{cases} \Pf(B_{\lambda_i+N-i,\lambda_j+N-j})_{1 \leq i,j \leq N}\ \text{ for $N$ even,}\\
 \Pf(B_{\lambda_i+N-i,\lambda_j+N-j})_{1 \leq i,j \leq N+1}\ \text{ for $N$ odd,} 
		\end{cases}
	\end{align}
		where $(\beta_N)_{N\geq 1}$ is a sequence satisfying~\eqref{eq:2factorial}.
	Then $\beta_NF(N)$ is a formal-$N$ BKP tau function. 
\end{corollary}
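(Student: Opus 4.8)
The plan is to take $\tau(N):=F(N)$, keep the normalising sequence $(\beta_N)$ from the hypothesis, choose the truncation levels $\alpha_K:=K$, and verify one by one the three requirements of Definition~\ref{def:formalBKP}. The grading variable $t$ plays no essential role: since $s_\mu$ is homogeneous of degree $|\mu|$ for $\deg p_i=i$, one has $s_\mu\,t^{|\mu|}=s_\mu(tp_1,t^2p_2,\dots)$, and the substitution $p_i\mapsto t^ip_i$ is implemented on $\bigwedge^{\frac{\infty}{2}}V$ by the exponential of a multiple of $\mathcal{L}_0$, hence maps the $B_\infty$-orbit of $|0\rangle$ to itself up to a charge-dependent scalar which may be absorbed into $(\beta_N)$ without affecting \eqref{eq:2factorial}--\eqref{eq:3factorial}; so I may and will work with the $t$-free series $\sum_\mu b_\mu(N)s_\mu$. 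First the easy bullets. For $\ell(\mu)\le\alpha_K=K$ and any integer $i\ge K$ one has $\ell(\mu)\le i$, so hypothesis~\eqref{eq:pfaffianCoeff} writes $\beta_i\,b_\mu(i)$ as a finite Pfaffian of entries of the fixed array $B$; hence the coefficients of $\mathrm{Trunc}(\tau,\alpha_K)(N)$ are regular at $N=i$ for every integer $i\ge K$, $\mathrm{Trunc}(\tau,\alpha_K)(K)$ is well defined, and $\alpha_K=K\to\infty$, as required. Next, \eqref{eq:3factorial} is a formal consequence of the assumed \eqref{eq:2factorial}: writing $v_N:=\beta_{N-1}/\beta_N$ one has $R_k(N)=v_N/v_{N+k-1}$, so \eqref{eq:2factorial} for every odd $k$ says precisely that every even-shift ratio $v_a/v_{a+2j}$ is a rational function of $a$, and then
\[
S_k(N)=\frac{\beta_{N-2}\beta_{N+k}}{\beta_N\beta_{N+k-2}}=\frac{v_{N-1}v_N}{v_{N+k-1}v_{N+k}}
\]
equals $R_{k+1}(N-1)R_{k+1}(N)$ when $k$ is even and $R_{k+2}(N-1)R_k(N)$ when $k$ is odd, so indeed $S_k\in\mathbb{Q}(N)$.

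The heart of the matter is to show that the sequence $\big(\beta_K\mathrm{Trunc}(\tau,\alpha_K)(K)\big)_{K\in\mathbb{Z}}$ --- which by~\eqref{eq:pfaffianCoeff} and the choice $\alpha_K=K$ reads
\[
\beta_K\mathrm{Trunc}(\tau,K)(K)=\sum_{\mu:\,\ell(\mu)\le K}\Pf\!\big(B_{\mu_i+K-i,\,\mu_j+K-j}\big)_{1\le i,j\le K\ (\text{resp. }K+1\text{ when }K\text{ is odd})}\;s_\mu
\]
--- is a tau function of the large BKP hierarchy in the sense of Definition~\ref{def:binfty}. This is exactly the Pfaffian (Schur-function) expansion of large BKP tau functions of Kac and Van de Leur~\cite{KacVandeLeur1998,VandeLeur2001}: the charge-$N$ component of a vector in the $B_\infty$-orbit of $|0\rangle$ has Schur-function coefficients of the form $\Pf\big(B_{\mu_i+N-i,\,\mu_j+N-j}\big)$ built from a single skew-symmetric array $B=(B_{ij})_{i,j\in\mathbb{Z}}$, the Pfaffian being of the even size $N$ when $N$ is even and of the bordered odd size $N+1$ when $N$ is odd (the extra index coming from the empty row $\mu_{N+1}=0$), and conversely any such formal array produces an orbit vector, the exponent $h\in b_\infty$ being reconstructed order by order. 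In fermionic terms the forward direction is a Wick-theorem computation entirely parallel to the proof of Lemma~\ref{lemma:virasoroBF}: for $v=e^h|0\rangle$ the overlap $\langle N,\mu|v\rangle$ with $|N,\mu\rangle=e_{\rho_1+N+1}\wedge e_{\rho_2+N+1}\wedge\cdots$ is, up to the overall normalisation $\langle N|v\rangle$, the Pfaffian of the array of two-point functions indexed by the $\rho_i+N$, and the generators $\phi_0\psi_k,\phi_0\psi_k^*$ of $b_\infty$ are what produce the bordering row and column in odd charge. Since the array $B$ in~\eqref{eq:pfaffianCoeff} does not depend on $K$, these truncations assemble into one and the same orbit vector, i.e.\ into a single large BKP tau function; this is also why passing through truncations is unavoidable --- the coefficients $b_\mu(N)$ of $F(N)$ are rational in $N$ and develop poles once $\ell(\mu)>N$, so no literal specialisation of $F$ at an integer is itself a tau function, whereas keeping only $\ell(\mu)\le\alpha_K=K$ retains precisely the terms for which~\eqref{eq:pfaffianCoeff} holds and is pole-free. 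Having verified all three bullets, Definition~\ref{def:formalBKP} yields that $\beta_N F(N)$ is a formal-$N$ BKP tau function.

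The step I expect to be the main obstacle is making the Kac--Van de Leur Pfaffian expansion match~\eqref{eq:pfaffianCoeff} on the nose: one must fix the index conventions carefully, in particular the parity-dependent bordering and the identification, at each charge $N$, of the ``Dirac sea'' block of $B$ with the vacuum $|N\rangle$, and one must argue genuine membership in the (formal, $t$-adically completed) $B_\infty$-orbit rather than mere term-by-term satisfaction of the bilinear equations. The rescaling by $t$ and the renormalisation by $(\beta_N)$, though routine, have to be threaded through this identification consistently. Once the fermionic dictionary of Section~\ref{sec:bosonfermion} is set up, the remaining work is the Wick bookkeeping, which is of the same nature as the computation already carried out there for $\mathcal{L}_r$.
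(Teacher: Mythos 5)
Your proposal is correct and follows essentially the same route as the paper: realize the bordered Pfaffian coefficients as matrix elements $\langle N,\lambda|\,g\,|0\rangle$ of a single element $g\in B_\infty$ built from the array $B$, with the $\phi_0\psi_i$ generators producing the extra row and column in odd charge, and note that the charge grading selects the correct power of the quadratic part at each $N$. The only substantive difference is that where you appeal to the Kac--Van de Leur Pfaffian expansion as a black box and leave the Wick bookkeeping as ``remaining work,'' the paper writes the group element down explicitly, namely $g=(1+g_1)e^{g_2}$ with $g_1=\sum_{i\geq 0}B_{i,-1}\psi_i\phi_0$ and $g_2=\sum_{j>i\geq 0}B_{i,j}\psi_i\psi_j$ (using $g_1^2=0$ to get $1+g_1=e^{g_1}\in B_\infty$), and checks the identity by expanding $g_2^{N/2}$, resp.\ $g_1g_2^{(N-1)/2}$, directly. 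Your observation that \eqref{eq:3factorial} is a formal consequence of \eqref{eq:2factorial} via $S_k(N)=v_{N-1}v_N/(v_{N+k-1}v_{N+k})$ is a worthwhile addition that the paper leaves implicit.
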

\begin{proof}
	The proof is based on a variant of the Wick theorem, which enables us to express the Pfaffians in~\eqref{eq:pfaffianCoeff} in the infinite-wedge formalism. 
	Define the elements 
	$$
 g_1:= \sum_{i \geq 0} B_{i,-1} \psi_i \phi_0
		\ \ , \ \
	g_2:=\sum_{j>i \geq 0} B_{i,j} \psi_i \psi_j  , 
	$$
	which belong to $b_\infty$, as infinite formal linear combinations.

	First, if $N$ is even, we claim that 
	\begin{align}\label{eq:evenPfaff}
		\Pf (B_{\lambda_i-i+N,\lambda_j-j+N})_{1 \leq i,j\leq N}=
\frac{1}{(N/2)!} \langle N,\lambda | g_2^{N/2} | 0, \emptyset \rangle.
	\end{align}
	To see this, just  expand the $(N/2)$-th power $g_2^{N/2}$ and observe that the terms contributing to the scalar product expression are in one-to-one correspondence with ordered pairings of the set of indices $(\lambda_i-i+N)_{1\leq i,j \leq N}$, where each pair $(i,j)$ comes with a weight $B_{i,j}$. The factorial comes from~\eqref{eq:DefPf} while the absence of power of $2$ compared to that equation comes from the fact that the definition of $g_2$ only contains terms with $i<j$. Note moreover that the signs coming from anticommutation of Fermionic operators match the sign in~\eqref{eq:DefPf}.

	The situation is similar if $N$ is odd, with now
\begin{align}\label{eq:oddPfaff}
	\Pf (B_{\lambda_i+N-i,\lambda_j+N-j})_{1 \leq i,j \leq N+1} = \frac{1}{((N-1)/2)!} \langle N,\lambda | g_1 g_2^{(N-1)/2} | 0, \emptyset \rangle.
\end{align}
	To see this, it is important to observe that the index $\lambda_i+N-i$ appearing in the Pfaffian can be equal to $-1$,  if $i=N+1$, $\lambda_i=\lambda_{N+1}=0$. Therefore a term $\psi_i \phi_0$ coming from the expansion of $g_1$ can be interpreted as matching the index $i$ with the index $-1$ in the definition~\eqref{eq:DefPf} of the Pfaffian. Note also that the factorial is $((N-1)/2)!$ and not $((N+1)/2)!$ since by considering the product $g_1g_2^{(N-1)/2}$ we force the pair containing the index $-1$ to appear in first position.

	Now observe that, because $g_i$ is an operator of charge $i$ for $i=1,2$, both \eqref{eq:evenPfaff} for $N$ even, and \eqref{eq:oddPfaff} for $N$ odd, are equal to $\langle N,\lambda | (1+g_1) e^{g_2} | 0, \emptyset \rangle$, since in each case only one order of the Taylor expansion of the exponential contributes.

In the notation of Definition~\ref{def:formalBKP} it follows that the truncated function, with $\alpha_N=N$, can be expressed as
	\begin{align*}
		\beta_N \mathrm{Trunc}(\tau,N)(N) = 
 \sum_{\lambda\atop \ell(\lambda)\leq N}
	\beta_Nb_\lambda(N) s_\lambda t^{|\lambda|}
		=  \langle N | \Gamma_+(\pp) (1+g_1) e^{g_2} | 0 \rangle.	
	\end{align*}
	Finally, observe (the situation is similar to~\cite{VandeLeur2001}) that $(1+g_1)\in B_\infty$ since $g_1^2=0$ as seen by expanding the square and using anticommutation relations, so that $1+g_1 = e^{g_1}$.
	Hence $\beta_N \mathrm{Trunc}(\tau,N)(N)$ is a BKP tau function, which concludes the proof.
\end{proof}

Combining the results above we obtain one of our main results that
says that the generating function of $b$-monotone Hurwitz numbers is a
formal-$N$ BKP tau function (up to an explicit prefactor $\beta_N$) for $b=1$ and $2N=u^{-1}$.

\begin{theorem}[BKP hierarchy for non-oriented monotone Hurwitz numbers]\label{thm:mainBKP}
	The function
	$$\beta_N\tau^Z_{b=1}(t; \mathbf{2p}, u)\Big|_{u=(2 N)^{-1}}
	=\beta_N\sum_{n \geq 0} t^n\sum_{\lambda \vdash n} 
        \frac{Z_\lambda(\mathbf{2p})}{j_\lambda}\prod_{\square
	\in \lambda}\frac{1}{ 2 N +c_b(\square)},
	$$
	with $\beta_N=\tfrac{1}{\prod_{k=1}^{N-1}(2k)!}$,
	is a formal-$N$ BKP tau function. It satisfies the hierarchy
        of equations~\eqref{BKP}, and in particular the BKP
        equation~\eqref{eq:explicitBKP} with
        $S_2(N)=\frac{1}{(2N+2)_4(2N)_4}$, where $(N)_k :=
        N(N-1)\cdots(N-k+1)$ denotes the falling factorial.
\end{theorem}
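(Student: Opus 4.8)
The plan is to obtain Theorem~\ref{thm:mainBKP} by combining three results established above: the scaled Schur expansion of $\tau^Z_{b=1}$ (Theorem~\ref{thm:schurExpansion}), the Pfaffian formula for the inverse orthogonal dimensions (Theorem~\ref{theo:Pfaffian}), and the Corollary immediately preceding the theorem, which upgrades such a Pfaffian structure to a formal-$N$ BKP tau function. First I would rewrite the function in the statement in the scaled Schur basis: substituting $\mathbf{p}\mapsto\mathbf{2p}$ in~\eqref{eq:schurExpansion} and setting $u^{-1}=2N$ gives
\[
\tau^Z_{b=1}(t;\mathbf{2p},u)\Big|_{u=(2N)^{-1}} \;=\; \sum_{n\geq 0} t^n \sum_{\lambda\vdash n} \frac{s_\lambda(\pp)}{\hook_\lambda^2\, o_\lambda(1^{2N})} \;=\; \sum_\lambda a_\lambda(N)\, s_\lambda\, t^{|\lambda|},
\]
so the Schur coefficient is $b_\lambda(N)=a_\lambda(N)$, a rational function of $N$ whose integer evaluations at $N\geq\ell(\lambda)$ are recorded in the Lemma following Theorem~\ref{thm:schurExpansion}.

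Next I would feed this into the Corollary. By Theorem~\ref{theo:Pfaffian}, for every integer $N$ with $\ell(\lambda)\leq N$ one has $a_\lambda(N)=\prod_{k=1}^{N-1}(2k)!\cdot\Pf\big(a_{\lambda_i+N-i,\,\lambda_j+N-j}\big)$, the Pfaffian being of size $N$ (resp.\ $N+1$) when $N$ is even (resp.\ odd), with the explicit entries $a_{i,j}$ given there. Taking $\beta_N=1/\prod_{k=1}^{N-1}(2k)!$ and $B_{i,j}:=a_{i,j}$ (extended by $0$ to indices smaller than $-1$, which never occur since for $\ell(\lambda)\leq N$ every argument $\lambda_i+N-i$ lies in $\{-1,0,1,\dots\}$), this is precisely hypothesis~\eqref{eq:pfaffianCoeff}. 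It then remains to check that $(\beta_N)$ satisfies~\eqref{eq:2factorial}; by the remark after Proposition~\ref{prop:BKP} it suffices that $\beta_N\beta_{N+2}/\beta_{N+1}^2$ be rational in $N$, and telescoping the factorial products gives $\beta_N\beta_{N+2}/\beta_{N+1}^2=(2N)!/(2N+2)!=1/\big((2N+2)(2N+1)\big)$. The Corollary therefore yields that $\beta_N\tau^Z_{b=1}(t;\mathbf{2p},u)\big|_{u=(2N)^{-1}}$ is a formal-$N$ BKP tau function, and Proposition~\ref{prop:BKP} gives the whole hierarchy~\eqref{BKP}.

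Finally I would identify $S_2(N)$ and specialize to the BKP equation~\eqref{eq:explicitBKP}. From~\eqref{eq:3factorial} with $k=2$ we have $S_2(N)=\beta_{N-2}\beta_{N+2}/\beta_N^2$, and telescoping again gives $S_2(N)=\dfrac{(2N-4)!\,(2N-2)!}{(2N)!\,(2N+2)!}=\dfrac{1}{(2N)_4\,(2N+2)_4}$; inserting this into~\eqref{eq:explicitBKP} produces the stated equation.

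Since the conceptual content is carried entirely by the earlier results, the only delicate points are bookkeeping, and I expect the main obstacle to be verifying the truncation hypotheses of Definition~\ref{def:formalBKP}: one takes $\alpha_K=K$ and uses that for $\ell(\lambda)\leq N$ the quantity $o_\lambda(1^{2N})$ is a genuine irreducible representation dimension, hence a positive integer, so the coefficients of $\mathrm{Trunc}(\tau,K)(N)$ have no pole at any integer $N\geq K$ and $\mathrm{Trunc}(\tau,K)(K)$ is well defined. The other point requiring care is matching the index conventions of the Pfaffian in Theorem~\ref{theo:Pfaffian}, including the boundary entries involving index $-1$, with the matrix $B$ demanded by the Corollary; once that is in place, no further computation is needed.
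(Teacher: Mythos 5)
Your proposal is correct and follows essentially the same route as the paper: the paper's proof likewise combines the scaled Schur expansion of Theorem~\ref{thm:schurExpansion}, the Pfaffian formula of Theorem~\ref{theo:Pfaffian} (with $B_{i,j}=a_{i,j}$ and $\beta_N=1/\prod_{k=1}^{N-1}(2k)!$) fed into the corollary on Pfaffian coefficients, and the verification that $\beta_N$ satisfies~\eqref{eq:2factorial} and~\eqref{eq:3factorial}, yielding $S_2(N)=1/\bigl((2N+2)_4(2N)_4\bigr)$ exactly as you computed.
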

\begin{proof}
	This is a direct consequence of the formalism developped in this section, together with the explicit expansion of Theorem~\ref{thm:schurExpansion}
$$
	\tau_{b=1}^{Z}\Bigl(t;\mathbf{ 2 p},\frac{1}{2N}\Bigr) = \sum_{k\geq 0} t^k \sum_{\lambda\vdash k}
	a_\lambda(N) 
	s_\lambda(\mathbf{p})$$
	and the  Pfaffian structure of $a_\lambda(N)$ given by Theorem~\ref{theo:Pfaffian},
\begin{equation*}
\operatorname{Trunc}(\tau_{b=1}^Z, n)\Bigl(t;\mathbf{2p}, \frac{1}{2n}\Bigr) = \sum_{k\geq0} t^k \sum_{\substack{\la\vdash k\\ \ell(\la)\leq n}} a_\la(n) s_\la(\pp).
\end{equation*}
Note in particular that $\beta_N$ satisfies~\eqref{eq:2factorial} and~\eqref{eq:3factorial} with
	$R_k(N)=\frac{1}{(2N+2k-4)_{2k-2}}$ and $S_k(N) =
        \frac{1}{(2N+2k-2)_{2k}(2N+2k-4)_{2k}}$.
\end{proof}

\section{Orthogonal BGW integral and its solution}
\label{sec:BGW}

\subsection{The BGW integral over $O(n)$}

The \emph{Br\'ezin--Gross--Witten integral} (BGW integral for short)
is a function of a complex, $n\times n$ matrix $X$ and its complex
conjugate $X^\dagger$, defined as an integral over the unitary group $U(n)$
\begin{equation}
\operatorname{BGW}_{U(n)}(X, t) := \int_{U(n)} dU\ \exp \sqrt{t}\,\operatorname{tr} (UX + X^\dagger U^\dagger),
\end{equation}
where $dU$ denotes the normalized Haar measure over $U(n)$.	The variable $t$ can be completely re-absorbed in $X$ and $X^\dagger$, but we will keep it to make the link with our function $\tau_b^Z(t;\pp,u)$. As emphasized in \cite{MironovMorozovSemenoff1996}, this integral has several interesting expansions. We will focus on the ``character'' expansion (in the terminology of \cite{MironovMorozovSemenoff1996}), meaning that we consider $\operatorname{BGW}_{U(n)}(X, t)$ as an element of $\QQ[X][[t]]$, where $\QQ[X]$ denotes the ring of polynomials in the matrix elements of $X$ and its complex conjugate. Then, for $X\in GL_n(\C)$, $\operatorname{BGW}_{U(n)}(X, t)$ admits an expansion which coincides with $\tau_b^Z(t;\pp,u)$ at $b=0$, $u=1/n$ and $p_i = p_i(XX^\dagger) := \operatorname{tr}((XX^\dagger)^i)$ being the power-sums associated with the $n$ eigenvalues of $XX^\dagger$. We will describe below this equality, and how this evaluation of $\tau_b^Z(t;\pp,u)$ makes sense.

For an $n\times n$ matrix $Y$ with eigenvalues $y_1, \dotsc, y_n$, denote 
\begin{equation*}
\mathbf{p}(Y) = (p_i(Y):=\operatorname{tr}(Y^i))_{i\geq1}, \qquad \text{and} \quad s_\lambda(Y) := s_\lambda(\mathbf{p}(Y))
\end{equation*}
the Schur polynomial evaluated on the variables $y_1, \dotsc, y_n$
(recall that we conventionally parametrize symmetric functions by the
power-sums of the underlying variables). We start by expanding
$\operatorname{BGW}_{U(n)}(X, t)$ on Schur polynomials, following
ideas from \cite{ZinnJustin2002} used in an analogous situation of the
HCIZ integral. Using the Cauchy identity \cite[(2.1), (2.2)]{ZinnJustin2002}:
\[ \exp(t\operatorname{tr}X) = \sum_{k \geq 0}\sum_{\lambda \vdash k}t^k\frac{s_\lambda(X)}{\hook_\lambda}\]
and commuting sums with integral we have
\begin{equation}
\operatorname{BGW}_{U(n)}(X, t) = \sum_{k,l\geq 0} t^{\frac{k+l}{2}} \sum_{\lambda\vdash k, \mu\vdash l} \frac{1}{\hook_\lambda\hook_\mu} \int_{U(n)} dU\ s_\lambda(UX) s_\mu(X^\dagger U^\dagger).
\end{equation}
For $X\in GL_n(\C)$, the following orthogonality relation is classical
and follows from the orthogonality relation of the irreducible
characters of $U(n)$, which are given by Schur polynomials (see~\cite{ZinnJustin2002})%
\begin{equation} \label{UnitarySchurIntegral}
\int_{U(n)} dU\ s_\lambda(UX) s_\mu(X^\dagger U^\dagger) = \begin{cases} \delta_{\lambda, \mu} \frac{s_\lambda(XX^\dagger)}{s_\lambda(1^n)}\quad &\text{if $\ell(\lambda)\leq n$,}\\
0 \quad &\text{else.} \end{cases}
\end{equation}
Notice in particular that the denominator $s_\lambda(1^n)$ is always well-defined for $n\in\N$. This gives 
\begin{equation}
\operatorname{BGW}_{U(n)}(X, t) = \sum_{k\geq 0} t^k \sum_{\substack{\lambda\vdash k\\ \ell(\lambda)\leq n}} \frac{1}{\hook_\lambda^2\, s_\lambda(1^n)} s_\lambda(XX^\dagger).
\end{equation}
This expansion is exactly $\tau_{b=0}^Z(t;\pp,u)$, truncated as
$\operatorname{Trunc}(\tau_{b=0}^Z,n)$ (\cref{def:formalBKP}) and
evaluated at $u=1/n$, $p_i = p_i(XX^\dagger)$, which yields
\begin{equation} \label{UnitaryBGW_SchurExpansion}
\operatorname{BGW}_{U(n)}(X, t) = \operatorname{Trunc}(\tau_{b=0}^Z,n)(t;\pp(XX^\dagger), 1/n).
\end{equation}

As already emphasized, our generating series $\tau_b^Z$ cannot be evaluated on $u^{-1}\in\N$ because the coefficients $[s_\lambda]\tau_b^Z$ are rational in $u^{-1}$ with poles on integers. However the truncation of $\tau_{b=0}^Z$ following \cref{def:formalBKP}, reduces the expansion on Schur functions to those partitions for which $\ell(\lambda)\leq n$. Then the coefficients $[s_\lambda] \operatorname{Trunc}(\tau_{b=0}^Z,n)$ have poles only on integers $u^{-1} = i<n$ and the evaluation at $u^{-1}=n$ makes sense.

There is a natural truncation to $\ell(\lambda)\leq n$ in the BGW
model, which is implemented via \eqref{UnitarySchurIntegral} and
ultimately due to the fact that there is a \emph{finite} number of
independent variables $x_1, \dotsc, x_n$. Indeed, by \emph{first}
substituting the variables $p_1, p_2, \dotsc$ with $p_1(XX^\dagger),
p_2(XX^\dagger), \dotsc$ in $\tau_{b=0}^Z$, all the Schur polynomials with $\ell(\la)>n$ vanish. This gives
\begin{equation}\label{eq:matrixTrunc}
\operatorname{Trunc}(\tau_{b=0}^Z,n)(t; \pp(XX^\dagger), 1/n) = \tau_{b=0}^Z(t;\pp(XX^\dagger), u)_{\vert u=1/n}
\end{equation}

There is no direct way to write a $b$-deformation (or $\beta$-deformation, in the context of matrix integrals, related by $1+b=2/\beta$) of the BGW integral, since there is no group which deforms $U(n)$ over which to integrate. Nevertheless, there is an obvious candidate for an orthogonal BGW integral,
\begin{equation}
\operatorname{BGW}_{O(n)}(X, t) := \int_{O(n)} dO\ \exp \sqrt{t}\operatorname{tr} (XO),
\end{equation}
where $dO$ is the normalized Haar measure over $O(n)$ and $X$ can be an arbitrary $n\times n$ matrix. It is connected to our main function as follows.

\begin{proposition}[Matrix integral expression of $\tau^Z_{b=1}$] \label{prop:BGWON}
Let $X\in GL_n(\RR)$ and denote $X^t$ its transpose. Then,
\begin{equation*}
\operatorname{BGW}_{O(n)}(X, t) = \tau^Z_{b=1}(t; \pp(XX^t), u)_{\vert
  u^{-1}=n}
\end{equation*}
\end{proposition}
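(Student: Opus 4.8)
The plan is to follow the same route that led to~\eqref{UnitaryBGW_SchurExpansion} in the unitary case, with the Schur orthogonality over $U(n)$ replaced by the classical theory of zonal polynomials as zonal spherical functions of the Gelfand pair $(GL_n(\RR),O(n))$. First I would expand the exponential with the Cauchy-type identity $\exp(\operatorname{tr}Y)=\sum_{\mu}\hook_\mu^{-1}s_\mu(Y)$ (the one recalled in the unitary case) applied to $Y=XO$, and commute the sum with the Haar integral to get
\begin{equation*}
\operatorname{BGW}_{O(n)}(X,t)=\sum_{\mu}\frac{t^{|\mu|/2}}{\hook_\mu}\int_{O(n)}dO\ s_\mu(XO).
\end{equation*}
Replacing $O$ by $-O\in O(n)$ shows the integral vanishes whenever $|\mu|$ is odd, so only integer powers of $t$ survive and the series lies in $\QQ[X][[t]]$, as it should.

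The key input is the classical evaluation of $\int_{O(n)}s_\mu(XO)\,dO$. Writing $s_\mu(XO)=\operatorname{tr}_{V_\mu}\!\big(\rho_\mu(X)\rho_\mu(O)\big)$, where $V_\mu$ is the Schur functor applied to $\C^n$ and $\rho_\mu$ the induced representation, this integral equals $\operatorname{tr}_{V_\mu}\!\big(\rho_\mu(X)P_\mu\big)$ with $P_\mu$ the orthogonal projection onto the $O(n)$-invariants of $V_\mu$. That invariant subspace is nonzero (and then one-dimensional) exactly when $\mu$ is an even partition, say $\mu=2\lambda$ (each part of $\lambda$ doubled), and in that case the resulting bi-$O(n)$-invariant matrix coefficient -- a polynomial in the entries of $XX^t$ of degree $|\lambda|$ -- is the zonal spherical function, i.e.
\begin{equation*}
\int_{O(n)}dO\ s_\mu(XO)=\begin{cases}\dfrac{Z_\lambda(\pp(XX^t))}{Z_\lambda(1^n)}&\text{if }\mu=2\lambda,\\ 0&\text{otherwise,}\end{cases}
\end{equation*}
with $Z_\lambda$ the zonal polynomial in the normalization of this paper and $Z_\lambda(1^n)=\prod_{\square\in\lambda}(n+c_b(\square))\big|_{b=1}$. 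I would fix the normalization by evaluating at $X=I$ (both sides equal $1$) and, if a self-contained check is wanted, cross-check $\mu=(2)$, where both sides equal $p_1(XX^t)/n$; the identification of the matrix coefficient with $Z_\lambda$ is standard (see~\cite{Macdonald1995} Ch.~VII).

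Substituting and re-indexing by $\lambda$ gives $\operatorname{BGW}_{O(n)}(X,t)=\sum_\lambda t^{|\lambda|}\,\hook_{2\lambda}^{-1}\,Z_\lambda(\pp(XX^t))/Z_\lambda(1^n)$. At this point I would invoke the classical doubling identity $\hook_{2\lambda}=j_\lambda^{(1)}$, immediate from~\eqref{eq:HookProduct}--\eqref{eq:HookProduct2} and the squared-norm formula $j_\lambda^{(b)}=\hook_b(\lambda)\hook'_b(\lambda)$: each box $\square$ of $\lambda$ gives rise to two boxes of the doubled diagram $2\lambda$ with hook lengths $2a(\square)+\ell(\square)+1$ and $2a(\square)+\ell(\square)+2$, whose product over $\lambda$ is $\hook_1(\lambda)\hook'_1(\lambda)$. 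This turns the previous display into exactly the right-hand side of Definition~\ref{def:mainfunction} at $b=1$, with $\pp$ specialized to $\pp(XX^t)$ and $u^{-1}=n$, once one uses $Z_\lambda(1^n)=\prod_{\square}(n+c_b(\square))\big|_{b=1}$. To close the argument I would observe -- exactly as in the unitary case around~\eqref{eq:matrixTrunc} -- that the evaluation at $u^{-1}=n$ is legitimate after specializing $\pp$: a partition $\lambda$ contributes a pole at $u^{-1}=n$ to a coefficient of $\tau_b^Z$ iff $\prod_{\square}(n+c_b(\square))\big|_{b=1}$ has a zero, and those are precisely the $\lambda$ for which $Z_\lambda(\pp(XX^t))=0$, so all surviving terms are regular there.

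I expect the main obstacle to be pinning down the $O(n)$-integral formula and the doubling identity in exactly the conventions of this paper: the representation theory behind them (decomposition of the symmetric algebra on $\operatorname{Sym}^2\C^n$ as a $GL_n$-module, multiplicity of the trivial $O(n)$-type in $V_\mu$, zonal spherical functions) is classical, but translating normalizations and handling the regularization at $u^{-1}=n$ (the analogue of the truncation discussion preceding~\eqref{eq:matrixTrunc}) takes some care. Should the character-theoretic route prove awkward, an alternative would be to expand $\operatorname{BGW}_{O(n)}(X,t)$ on zonal polynomials of $\pp(XX^t)$ directly and check that it satisfies the evolution equation~\eqref{eq:evolution} at $b=1$ (Theorem~\ref{thm:evolution}), which also characterizes $\tau_b^Z$; but the route above is more transparent and simultaneously exhibits the link with zonal spherical functions.
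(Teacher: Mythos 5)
Your proposal is correct and follows essentially the same route as the paper's proof: Cauchy expansion of the exponential into Schur polynomials, the classical evaluation of $\int_{O(n)}s_\mu(XO)\,dO$ via zonal spherical functions (which the paper simply cites to Macdonald, Ch.~VII), and identification with the zonal expansion of $\tau^Z_{b=1}$ using $Z_\lambda(1^n)=\prod_{\square\in\lambda}(n+c_1(\square))$ and the vanishing of $Z_\lambda(\pp(XX^t))$ for $\ell(\lambda)>n$ to justify the evaluation at $u^{-1}=n$. Your extra details (the projection-onto-invariants argument and the explicit doubling identity $\hook_{2\lambda}=j_\lambda^{(1)}$) merely spell out steps the paper leaves implicit.
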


\begin{proof} \cite[Section VII.3, Ex. 2]{Macdonald1995} The proof parallels the unitary case. One first expands the orthogonal BGW integral on zonal polynomials. This is done with the Cauchy identity, and commuting sum with integral. The classical integral \eqref{UnitarySchurIntegral} is replaced with
\begin{equation}
\int_{O(n)} dO\ s_\lambda(XO) = \begin{cases} \frac{Z_\mu(XX^t)}{Z_\mu(1^n)} \quad &\text{if $\lambda =2\mu$ and $\ell(\lambda)\leq n$,}\\
0 \quad &\text{else,}\end{cases}
\end{equation}
for $X\in GL_n(\RR)$. Here $Z_\mu(XX^t)$ is the zonal polynomial evaluated on the power-sums $p_i = p_i(XX^t)$. One gets
\begin{equation}
\operatorname{BGW}_{O(n)}(X, t) = \sum_{k\geq 0} \sqrt{t}^{k} \sum_{\lambda \vdash k} \int_{O(n)}\frac{s_\lambda(XO)}{\hook_{\lambda}}=\sum_{k\geq 0} t^{k} \sum_{\substack{\lambda \vdash k\\ \ell(\lambda)\leq n}} \frac{1}{\hook_{2\lambda}\,Z_\lambda(1^n)} Z_\lambda(XX^t).
\end{equation}

We now show that the right hand side of \cref{prop:BGWON} is the same as the above zonal expansion. Since $Z_\la(XX^t)=0$ when $\ell(\la)> n$ \cite{Stanley1989}, the evaluation of $\tau_{b=1}^Z$ on the power-sums $p_i(XX^t)$ reduces to
\begin{equation*}
\tau_{b=1}^Z(t;\mathbf{p(XX^t)}, u) = \sum_{k\geq 0} t^k \sum_{\substack{\lambda \vdash k\\ \ell(\lambda)\leq n}} \frac{1}{\hook_{2\lambda}\,\prod_{\square \in\la} (u^{-1} + c_1(\square))} Z_\lambda(XX^t)
\end{equation*}
One concludes with $Z_\la(1^n) = \prod_{\square \in\la} (n + c_1(\square))$ \cite{Stanley1989}, which does not vanish as long as $\ell(\la)\leq n$.
\end{proof}

\subsection{Pfaffian solutions}

The BGW integral in the unitary case admits also a representation as a ratio of determinants. If $XX^\dagger \in GL_n(\C)$ has eigenvalues $x_1, \dotsc, x_n$, denote $\Delta(XX^\dagger) = \det (x_i^{n-j}) = \prod_{1\leq i<j\leq n} (x_i-x_j)$ the corresponding Vandermonde determinant. Then \cite{MironovMorozovSemenoff1996}
\begin{equation} \label{BGWasDet}
\operatorname{BGW}_{U(n)}(X, t) = \left(\prod_{k=1}^{n-1} k!\right)\, \frac{\det \Bigl((t x_i)^{\frac{n-j}{2}} I_{n-j}\bigl(2 \sqrt{t x_i}\bigr)\Bigr)_{1\leq i,j\leq n}}{t^{\frac{n(n-1)}{2}} \Delta(XX^\dagger)}
\end{equation}
where
\[I_j(z) = \bigg(\frac{z}{2}\bigg)^j \sum_{l \geq 0}
  \frac{(z^2/4)^l}{l!(l+j)!}\]
is the $j$-th modified Bessel function of the first kind.
The derivation of \eqref{BGWasDet} in
\cite{MironovMorozovSemenoff1996} makes use of the HCIZ integral and
HCIZ formula\footnote{It is also possible to derive this expression from the matrix model of \cite{ZinnJustin2002}, also using the HCIZ formula.}
\begin{equation*}
\operatorname{HCIZ}_{U(n)}(X,Y) := \int_{U(n)} dU\ \exp \operatorname{tr} (XUYU^{-1}) = \frac{\det (e^{x_i y_j})}{\Delta (X) \Delta(Y)}.
\end{equation*}
In order to find an analogous formula for $\operatorname{BGW}_{O(n)}$, one
could use an orthogonal version of the HCIZ integral, where the
integral is over $O(n)$, $X$ is an arbitrary matrix, and $Y\in
O(n)$. However the Harish-Chandra formula, which works for any compact
Lie group $G$, is only valid for $X,Y$ in the Lie algebra of $G$, and
in the case $G=O(n)$ it applies only to skew-symmetric
matrices $X,Y$, therefore it cannot be used to derive a formula for for $\operatorname{BGW}_{O(n)}$. A compact formula for the HCIZ integral over the orthogonal
group with generic $X, Y$ (say real, symmetric) is unknown and posed
as an important open problem~\cite{BergereEynard2009}.

Nevertheless, it is natural to expect that an analogous formula to
\eqref{BGWasDet} exists in the orthogonal case and involves Pfaffians.
Our second main result gives such a formula in the special case when $X$
is arbitrary but the eigenvalues of $XX^t$ all have even
multiplicities. Equivalently, they are $x_1, x_1, x_2, x_2, \dotsc,x_{n}, x_{n}$ (we allow $x_i=x_j$ for $i \neq j$).

\begin{theorem}[The orthogonal BGW integral as a Pfaffian]\label{thm:pfaffianBGW}
Assume that $XX^t \in GL_{2n}(\RR)$ has eigenvalues $x_1, x_1, x_2, x_2, \dotsc, x_n, x_n$. Denote $\hat{X}_n = \operatorname{diag}(\sqrt{x_1}, \dotsc, \sqrt{x_n})$. Then for $n$ even
\begin{equation}
\operatorname{BGW}_{O(2n)}(X, t) = \left(\prod_{k=1}^{n-1} (2k)!\right)\, \frac{\operatorname{Pf}\bigl(M(t;x_i,x_j)\bigr)_{1\leq i,j\leq n}}{t^{\frac{n(n-1)}{2}} \Delta (\hat{X}_n\hat{X}^t_n)},
\end{equation}
where
\begin{multline}
M(t;x,y) = \frac{1}{8} \int_0^{t} \frac{dt'}{\sqrt{t'}} \Bigl(\sqrt{x} I_1(2\sqrt{t'x})\big(1+I_0(2\sqrt{t'y})\big) - \sqrt{y} I_1(2\sqrt{t'y})\big(1+I_0(2\sqrt{t'x})\big)\Bigr).
\end{multline}
and for $n$ odd
\begin{equation}
\operatorname{BGW}_{O(2n)}(X, t) = \left(\prod_{k=1}^{n-1} (2k)!\right)\, \frac{\operatorname{Pf}\bigl(M_{ij}(t;x_1,\dotsc, x_n)\bigr)_{1\leq i,j\leq n+1}}{t^{\frac{n(n-1)}{2}} \Delta (\hat{X}_n\hat{X}^t_n)},
\end{equation}
where
\begin{equation}
M_{ij}(t;x_1,\dotsc, x_n) = \begin{cases} M(t;x_i,x_j) \qquad &\text{for $i,j=1, \dotsc, n$},\\
 \frac{1}{2} I_0(2\sqrt{tx_i}) &\text{for $i=1, \dotsc, n$ and $j=n+1$},\\
-\frac{1}{2} I_0(2\sqrt{tx_j}) &\text{for $j=1, \dotsc, n$ and $i=n+1$}.
\end{cases}
\end{equation}
\end{theorem}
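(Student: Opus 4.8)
The plan is to feed the scaled Schur expansion of Theorem~\ref{thm:schurExpansion} and the Pfaffian formula for $a_\lambda(n)$ of Theorem~\ref{theo:Pfaffian} into the minor summation formula of Proposition~\ref{prop:CBForPfaffians}. First I would invoke Proposition~\ref{prop:BGWON}, which identifies $\operatorname{BGW}_{O(2n)}(X,t)$ with $\tau^Z_{b=1}(t;\pp(XX^t),u)\big|_{u^{-1}=2n}$. Since every eigenvalue of $XX^t$ has multiplicity $2$ one has $p_i(XX^t)=2\sum_{l=1}^n x_l^i$, so Theorem~\ref{thm:schurExpansion} (whose Schur functions are evaluated at half of the power-sums) gives
\[
\operatorname{BGW}_{O(2n)}(X,t)=\sum_{k\ge0}t^k\sum_{\substack{\lambda\vdash k\\ \ell(\lambda)\le n}}a_\lambda(n)\,s_\lambda(x_1,\dots,x_n),
\]
where $a_\lambda(n)=1/\bigl(\hook_\lambda^2\,o_\lambda(1^{2n})\bigr)$; the restriction $\ell(\lambda)\le n$ is automatic because there are only $n$ variables $x_l$, and the evaluation at $u^{-1}=2n$ is legitimate there since $o_\lambda(1^{2n})$ is then a genuine dimension, hence nonzero.

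Next I would substitute the bialternant formula $s_\lambda(x_1,\dots,x_n)=\det\bigl(x_i^{\lambda_j+n-j}\bigr)_{1\le i,j\le n}\big/\Delta(\hat{X}_n\hat{X}_n^t)$ together with the Pfaffian expression of Theorem~\ref{theo:Pfaffian} for $a_\lambda(n)$, and reindex the sum over $\lambda$ with $\ell(\lambda)\le n$ by the strictly decreasing sequence $\mu_j:=\lambda_j+n-j$, i.e. by an $n$-element subset $I\subset\mathbb{Z}_{\geq 0}$ (for $n$ odd one pads $\lambda$ with a zero part so that $\mu_{n+1}=-1$, and carries the index $-1$ along as a dedicated row and column of the matrices, exactly as in the proof of Theorem~\ref{theo:Pfaffian}). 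The monomial $t^k=t^{\sum_j\mu_j-\binom n2}$ is absorbed into the determinant by the choice $B_{i,p}:=(tx_i)^p$, after which the summand is precisely of the form $\det\bigl(B_{i,\mu_j}\bigr)_{i,\,\mu_j\in I}\cdot\Pf\bigl((a_{\mu_i,\mu_j})_{\mu_i,\mu_j\in I}\bigr)$. Proposition~\ref{prop:CBForPfaffians} with $N=\infty$ — a legitimate identity of formal power series in $t$, since $B_{i,p}$ has $t$-valuation $p$ and the minimal total degree $\sum_j\mu_j$ is attained at $\lambda=\emptyset$ — then collapses the sum over $I$ into a single Pfaffian and yields
\[
\operatorname{BGW}_{O(2n)}(X,t)=\Bigl(\textstyle\prod_{k=1}^{n-1}(2k)!\Bigr)\,\frac{\Pf\bigl(BAB^t\bigr)}{t^{\binom n2}\,\Delta(\hat{X}_n\hat{X}_n^t)},
\]
where $A=(a_{p,q})$ is the explicit skew matrix of Theorem~\ref{theo:Pfaffian} and $BAB^t$ is the $n\times n$ (resp. $(n+1)\times(n+1)$) skew matrix whose $t^{\binom n2}$-factor cancels the prefactor.

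It then remains to compute the entries $(BAB^t)_{ij}=\sum_{p,q}(tx_i)^p\,a_{p,q}\,(tx_j)^q$ and to identify them with the quantities $M(t;x_i,x_j)$ appearing in the statement (and, when $n$ is odd, the last row and column with the $I_0$ terms). Here one recognizes the modified Bessel series $\sum_{p\ge0}z^p/p!^2=I_0(2\sqrt z)$ and $\sum_{p\ge1}p\,z^p/p!^2=\sqrt z\,I_1(2\sqrt z)$, and the factor $\tfrac{p-q}{p+q}$ occurring for $p,q\ge1$ is linearised via $\tfrac1{p+q}=\int_0^1 s^{p+q-1}\,ds$; the change of variable $t'=st$ then produces exactly the integral $M(t;x_i,x_j)$, while the $q=0$ contributions (and, for $n$ odd, the $q=-1$ contributions) produce the $I_0$ pieces of the last row and column. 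I expect this identification of the double sum with the Bessel integral — keeping track of the several boundary cases in the definition of $a_{p,q}$ and of the overall normalising constant — to be the one genuinely delicate step; granting it, the scaling behaviour of the Pfaffian finishes both parities, and the identity extends from pairwise distinct $x_i$ to coincident ones by continuity, the left-hand side being polynomial in the $x_i$ (coefficientwise in $t$) while the right-hand side has only removable singularities on the diagonals $x_i=x_j$.
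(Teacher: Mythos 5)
Your proposal is correct and follows essentially the same route as the paper: Proposition~\ref{prop:BGWON} plus the scaled Schur expansion of Theorem~\ref{thm:schurExpansion}, the Pfaffian formula of Theorem~\ref{theo:Pfaffian}, the bialternant formula, and the minor summation formula of Proposition~\ref{prop:CBForPfaffians} with $N=\infty$, followed by the same Bessel-series identification of the entries (your $\int_0^1 s^{p+q-1}\,ds$ linearisation is the paper's $\int_0^t dt'/t'$ after the change of variable you indicate, and your treatment of the odd case via the extra index $-1$ matches the paper's padded $(n+1)\times(n+1)$ determinant). No gaps.
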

\begin{proof}
The power-sums of $XX^t$ and $\hat{X}_n$ are related by
\begin{equation*}
\pp(XX^t)\mathbf{/2} = \pp(\hat{X}_n\hat{X}^t_n).
\end{equation*}
Proposition~\ref{prop:BGWON} expresses
$\operatorname{BGW}_{O(2n)}(X,t)$ as $\tau^Z_{b=1}(t; \pp(XX^t), u)_{\vert u^{-1}=2n}$,
which we can further express in the scaled Schur basis by \cref{thm:schurExpansion}
\begin{equation*}
\begin{aligned}
\tau_{b=1}^Z(t;\pp(XX^t),u)_{\vert u^{-1}=2n} &= \sum_{m\geq 0} t^m \sum_{\substack{\lambda\vdash m\\ \ell(\lambda)\leq n}} \frac{1}{\hook_{\lambda}^2\,o_\lambda(1^{u^{-1}})}_{|u^{-1}=2n} s_\lambda(\pp(\hat{X}_n\hat{X}^t_n)).
\end{aligned} 
\end{equation*}
Note that the sum over partitions $\lambda$ runs a priori over all
possible partitions, but $s_\lambda(\pp(\hat{X}_n\hat{X}^t_n))$
vanishes for $\ell(\lambda)>n$.

Now we use \cref{theo:Pfaffian} to express the coefficient
$\frac{1}{\hook_{\lambda}^2\,o_\lambda(1^{u^{-1}})}_{|u^{-1}=2n} =
a_\lambda(n)$ with $n\in\N$ as a Pfaffian, and the bialternant formula
to express $s_\lambda(\pp(\hat{X}_n\hat{X}^t_n))$ as a ratio of
determinants. For $n$ even we obtain
\begin{equation*}
\operatorname{BGW}_{O(2n)}(X, t) = \left(\prod_{k=1}^{n-1} (2k)!\right)
\sum_{\substack{\lambda:\\ \ell(\lambda)\leq n}} \operatorname{Pf}(a_{\lambda_i+n-i,\lambda_j+n-j})_{1\leq i,j\leq n} \frac{\det \bigl((tx_i)^{\lambda_j+n-j}\bigr)_{1\leq i,j\leq n}}{t^{\frac{n(n-1)}{2}}\Delta(\hat{X}_n\hat{X}^t_n)},
\end{equation*}
and from the minor summation formula for Pfaffians given by
\cref{prop:CBForPfaffians} with $N=\infty$ it can be rewritten as
\begin{equation*}
\operatorname{BGW}_{O(2n)}(X, t) = \left(\prod_{k=1}^{n-1} (2k)!\right) \frac{\operatorname{Pf}\bigl(M(t;x_i,x_j)\bigr)_{1\leq i,j\leq n}}{t^{\frac{n(n-1)}{2}}\,\Delta(\hat{X}_n\hat{X}^t_n)},
\end{equation*}
with
\begin{equation} \label{EvenPfaffianBGW}
\begin{aligned}
M(t;x_i,x_j) &= \sum_{k,l\geq 0} t^{k+l}\ x_i^k a_{kl} x_j^l = \sum_{k\geq 1} t^k \frac{x_i^k - x_j^k}{2 k!^2} + \sum_{k,l\geq 1} t^{k+l}\frac{k-l}{4(k+l)} \frac{x_i^k}{k!^2} \frac{x_j^l}{l!^2}\\
&= \int_0^{t} \frac{dt'}{t'} \sum_{k\geq 1} k \frac{(t'x_i)^k -
  (t'x_j)^k}{2 k!^2} + \frac{1}{4} \int_0^{t} \frac{dt'}{t'}
\sum_{k,l\geq 1} (k-l) \frac{(t'x_i)^k}{k!^2} \frac{(t'x_j)^l}{l!^2}\\
&= \frac{1}{4} \int_0^{t} \frac{dt'}{t'} \Bigg(\sum_{k\geq 1}
k\frac{(t'x_i)^k}{k!^2} \bigg(2+\sum_{l \geq 1}\frac{(t'x_j)^l}{l!^2}\bigg)-\sum_{l\geq 1}
l\frac{(t'x_j)^k}{l!^2} \bigg(2+\sum_{k \geq
  1}\frac{(t'x_i)^l}{k!^2}\bigg)\Bigg)\\
&= \frac{1}{8} \int_0^{t} \frac{dt'}{\sqrt{t'}} \Bigl(\sqrt{x_i} I_1(2\sqrt{t'x_i})\big(1+I_0(2\sqrt{t'x_j})\big) - \sqrt{x_j} I_1(2\sqrt{t'x_j})\big(1+I_0(2\sqrt{t'x_i})\big)\Bigr).
\end{aligned}
\end{equation}

For $n$ odd,
\begin{equation*}
\operatorname{BGW}_{O(2n)}(X, t) = \left(\prod_{k=1}^{n-1} (2k)!\right)
\sum_{\substack{\lambda:\\ \ell(\lambda)\leq n}} \operatorname{Pf}(a_{\lambda_i+n-i,\lambda_j+n-j})_{1\leq i,j\leq n+1} \frac{\det \bigl((tx_i)^{\lambda_j+n-j}\bigr)_{1\leq i,j\leq n}}{t^{\frac{n(n-1)}{2}}\Delta(\hat{X}_n\hat{X}^t_n)},
\end{equation*}
In order to apply the minor summation formula for Pfaffians, we have
to promote $\det (x_i^{\lambda_j+n-j})_{1\leq i,j\leq n}$ to an
$(n+1)\times (n+1)$ determinant, by adding a row and a column of
zeroes to $(x_i^{\lambda_j+n-j})_{1\leq i,j\leq n}$ except for a 1 on
the diagonal. Therefore
\begin{multline*}
\operatorname{BGW}_{O(2n)}(X, t) = \frac{(-1)^{\frac{n(n-1)}{2}}}{t^{\frac{n(n-1)}{2}}\,\Delta(\hat{X}_n\hat{X}^t_n)} \left(\prod_{k=1}^{n-1} (2k)!\right)\\ 
\sum_{m\geq 0} \sum_{\substack{\lambda\vdash m\\ \ell(\lambda)\leq n}} \operatorname{Pf}(a_{\lambda_i+n-i,\lambda_j+n-j})_{1\leq i,j\leq n+1} \det (X_{i,\lambda_j+n-j})_{1\leq i,j\leq n+1}
\end{multline*}
with
\begin{equation*}
X_{ij} = \begin{cases} (tx_i)^{j} \qquad &\text{for $i=1, \dotsc, n$ and $j\geq 0$}\\
0 & \text{for $i=n+1$ and $j\geq 0$, and for $j=-1$ and $i=1, \dotsc, n$}\\
1 & \text{for $i=n+1$, $j=-1$}.
\end{cases}
\end{equation*}
Now, applying the minor-summation formula for Pfaffians given by
\cref{prop:CBForPfaffians} with $N=\infty$ we end up with
\[ \operatorname{BGW}_{O(2n)}(X, t) = \left(\prod_{k=1}^{n-1}
    (2k)!\right)\, \frac{\operatorname{Pf}\bigl(M_{ij}(t;x_1,\dotsc,
    x_n)\bigr)_{1\leq i,j\leq n+1}}{t^{\frac{n(n-1)}{2}} \Delta
    (\hat{X}_n\hat{X}^t_n)},\]
where
\begin{equation*}
M_{ij}(t;x_1,\dotsc, x_n) = \sum_{k,l\geq -1} X_{ik} a_{kl} X_{lj} =
\sum_{k\geq 0} a_{k,-1}\bigl(\delta_{i,n+1} (tx_j)^k - \delta_{j,n+1}
(tx_i)^k\bigr) + \sum_{k,l\geq 0} (tx_i)^k a_{kl} (tx_j)^l
\end{equation*}
and $x_{n+1}:=0$. The first sum
\[ \sum_{k\geq 0} a_{k,-1}\bigl(\delta_{i,n+1} (tx_j)^k - \delta_{j,n+1}
(tx_i)^k\bigr)= \delta_{i,n+1}\frac{1}{2} I_0(2\sqrt{tx_j})
-\delta_{j,n+1}\frac{1}{2} I_0(2\sqrt{tx_i}),\]
while the second was already computed in \eqref{EvenPfaffianBGW}.
\end{proof}

\appendix
\section{Related models and final comments}
\label{sec:otherModels}

In this appendix we mention analogies between monotone Hurwitz numbers and two other well studied models, namely general maps and bipartite maps. The results mentioned here are not original, but it is not always easy to find a canonical reference.

We will be interested in the two following $b$-deformed tau functions, which are special cases of the main function of~\cite{ChapuyDolega2020}
\begin{equation} \label{MapsTauFunctions}
\begin{aligned}
\tau_b^{(1)}(t;\pp,u) &= \sum_{n \geq 0} t^n\sum_{\lambda \vdash n} 
        \frac{J_\lambda(\pp)\theta_{2^n}(\lambda)}{j_\lambda}\prod_{\square
                              \in \lambda}(u-c_b(\square)), \\
                \tau_b^{(2)}(t;\pp,u,v) &= \sum_{n \geq 0} t^n\sum_{\lambda \vdash n} 
        \frac{J_\lambda(\pp)}{j_\lambda}\prod_{\square
                                      \in \lambda}(u-c_b(\square))(v-c_b(\square)),
\end{aligned}
\end{equation}
where $\theta_\mu(\lambda) = [p_\mu]J^{(1+b)}_\lambda(\pp)$.

By~\cite[Theorem~3.14]{ChapuyDolega2020}, $\tau^{(1)}$ (respectively, $\tau^{(2)}$) can be interpreted as the generating function of all maps (respectively, all bipartite maps) on non-oriented surfaces, with a weight $t$ per edge, a weight $u$ per vertex (respectively $u$ per white vertex and $v$ per black vertex), and a weight $p_i$ per face of degree $i$ (respectively, half-degree $i$). In both cases, each map $\mathcal{M}$ is equipped with a monomial $b$-weight of the form $b^{\nu(\mathcal{M})}/(1+b)^{cc(M)}$ where $\nu(\mathcal{M})$ is a non-negative integer, which vanishes if and only if the underlying surface is orientable. 

It is classical that $\tau^{(1)}_b$ for $b=0$ and $b=1$ coincide
respectively with the GUE and GOE partition functions. The general
$b$-case corresponds to the interpolation given by $\beta$-ensembles,
with $(1+b)\beta=2$. In the context of matrix integrals, it is also
known (see e.g.~\cite{AdlervanMoerbeke2001}) that their partition functions satisfy Virasoro constraints, and therefore a quantum curve equation that can be obtained by resummation. Similar statements can be made about $\tau^{(2)}$ and will not come as a surprise to experts.

Let $V(x,\pp) = \sum_{i\geq 1} \frac{p_i}{i} x^i$, and denote $\Delta_N(x) = \det (x_i^{j-1})_{1\leq i,j\leq N}$ the Vandermonde determinant for a set of $N$ variables. Let
\begin{equation*}
\begin{aligned}
Z^{(1)}_b(t;\pp,N) &= \int_{\mathbb{R}^N} \Bigl(\prod_{i=1}^N dx_i e^{-\frac{x_i^2}{2(1+b)t^2} + V(x_i;\mathbf{\frac{p}{1+b}})}\Bigr) |\Delta_N(x)|^{\frac{2}{1+b}}\\
Z^{(2)}_b(t;\pp,N,M) &= \int_{\mathbb{R}_+^{N}} \Bigl(\prod_{i=1}^{N} dx_i e^{-\frac{x_i}{(1+b)t} + V(x_i;\mathbf{\frac{p}{1+b}})} \lambda_i^{\frac{M-N+1}{1+b} -1}\Bigr) |\Delta_{N}(x)|^{\frac{2}{1+b}}
\end{aligned}
\end{equation*}
for $M\geq N$. Then, for $N\in\mathbb{N}$,
\begin{equation} \label{MatrixIntegrals}
\tau^{(1)}_b(t;\pp,N) = \frac{Z^{(1)}_b(t;\pp,N)}{Z^{(1)}_b(t;0,N)} \qquad \tau^{(2)}_b(t;\pp,N,M) = \frac{Z^{(2)}_b(t;\pp,N,M)}{Z^{(2)}_b(t;0,N,M)}
\end{equation}

Equation~\eqref{MatrixIntegrals} can be proved in at least two ways.
\begin{itemize}[itemsep=0pt, topsep=0pt,parsep=0pt, leftmargin=12pt]
\item By proving the same Virasoro constraints (or the same evolution equation) for both sides, see below.
\item By expanding the matrix integrals on Jack symmetric functions\footnote{One expands
\begin{equation*}
e^{\sum_{i=1}^N V(x_i;\mathbf{\frac{p}{1+b}})} = \sum_\la J^{(1+b)}_\la(x_1, \dotsc, x_N)\, \frac{J^{(1+b)}_\la(\pp)}{j_\la^{(b)}},
\end{equation*}
using the Cauchy identity for Jack symmetric functions, then commutes the sum over partitions with the integrals and evaluate the integrals using \cite{Kadell1997, Kaneko1993}. This is essentially the method proposed in \cite{MironovMorozovShakirov2010} for the $b$-deformed BGW integral.}.
\end{itemize}

\medskip

The Virasoro constraints for the matrix integrals can be obtained by standard techniques \cite{AdlervanMoerbeke2001}. Furthermore, the Virasoro constraints for $\tau^{(1)}_b$ and $\tau^{(2)}_b$, as defined in \eqref{MapsTauFunctions} can be directly obtained from Lemma~\ref{lemma:evolutionImpliesVirasoro}, giving the following.
\begin{proposition}
	For $m=1,2$ we have $L_i^{(m)} \tau^{(m)} =0$ for $i\geq -\delta_{m,1}$, where $(L_i^{(m)})_{i\geq -\delta_{m,1}}$ generate the Virasoro algebra and are given by
\begin{align*}
         L^{(1)}_i &= \begin{multlined}[t]\frac{p_{i+2}^*}{t^2} - \bigg((1+b)\sum_{\substack{m,n\geq1\\m+n=i}}p_m^*p_n^*+ \sum_{n \geq 1} p_n p^*_{n+i} + (b(i+1)+2u)p_i^* 
+ \tfrac{\delta_{i,-1}up_1+u(u+b)\delta_{i,0}}{1+b}\bigg),\end{multlined}\\
        L^{(2)}_i &= \frac{p_{i+1}^*}{t}
-\bigg((1+b)\sum_{\substack{m,n\geq1\\m+n=i}}p_m^*p_n^*+ \sum_{n \geq 1} p_n p^*_{n+i} + (bi+u+v)p_i^*+\frac{uv\delta_{i,0}}{1+b}\bigg).
\end{align*}
\end{proposition}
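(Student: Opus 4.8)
The plan is to follow exactly the two‑step pattern of \cref{sec:virasoro}: first produce an evolution equation for each of $\tau^{(1)}_b$ and $\tau^{(2)}_b$, i.e.\ a single $\tfrac{td}{dt}$--equation which is the $p_i$‑weighted sum of the desired constraints, and then invoke \cref{lemma:evolutionImpliesVirasoro} to split it back into the individual constraints. First I would derive the evolution equations by the same Jack‑theoretic computation as in the proof of \cref{thm:evolution}. The key point is that the content weights in \eqref{MapsTauFunctions} are \emph{polynomial} (degree $1$ for $\tau^{(1)}_b$, degree $2$ for $\tau^{(2)}_b$), so the resulting differential equation is finite: one writes $\tfrac{td}{dt}\tau^{(m)}_b$ as a sum over pairs $\lambda\nearrow\mu$, uses the Jack Pieri rule \eqref{eq:JackPieriRule} to account for the added box, and converts the content $c_b(\mu\setminus\lambda)$ of that box into the action of the Laplace--Beltrami operator via \eqref{eq:21^nLB} (applied once for $\tau^{(1)}_b$, iterated to produce a second content factor for $\tau^{(2)}_b$); the Cauchy identities \eqref{eq:Cauchy} and \eqref{eq:CauchySimple} then re‑assemble the sums. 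For $\tau^{(1)}_b$ one must additionally commute this whole manipulation with the character prefactor $\theta_{2^n}(\lambda)$, which on symmetric functions is the specialization extracting the ``$p_2$‑content''; this is precisely the step that upgrades the degree‑$1$ weight to the $t^2$‑prefactor and produces the shift $p^*_{i+2}/t^2$ in $L^{(1)}_i$. The outcome is, in each case, an equation $\bigl(\tfrac{td}{dt}+t^{k}A^{(m)}\bigr)\tau^{(m)}_b=0$ with $k=2$ for $m=1$ and $k=1$ for $m=2$, where $A^{(m)}=\sum_{i\ge1}p_i\,\widetilde L^{(m)}_i$ and $\widetilde L^{(m)}_i$ is (minus) the bracketed operator in the statement, suitably index‑shifted. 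For $b=0$ these are the classical cut‑and‑join equations for maps and bipartite maps.

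Next I would check, by a direct computation entirely analogous to the one behind \cref{thm:virasoro}, that $[L^{(m)}_i,L^{(m)}_j]=(i-j)L^{(m)}_{i+j}$ for all $i,j\ge-\delta_{m,1}$. This is routine, organized by the type of term — the part quadratic in the $p^*$, the transport part $\sum_n p_n p^*_{n+i}$, and the affine part with its $b$‑, $u$‑, $v$‑dependent coefficients — but the inhomogeneous pieces (the $\delta_{i,-1}$ and $\delta_{i,0}$ terms for $\tau^{(1)}_b$, the $\delta_{i,0}$ term for $\tau^{(2)}_b$) must be tracked with care, since they are exactly what makes the algebra close all the way down to index $-\delta_{m,1}$ rather than only for $i\ge1$, as in $L^Z_i$ of \eqref{eq:defLZi}.

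With these two ingredients in hand I would apply \cref{lemma:evolutionImpliesVirasoro} to $F=\tau^{(m)}_b$ with $s=t$ and $k$ as above, over the coefficient ring $R=\mathbb Q(b)[u]$ (resp.\ $\mathbb Q(b)[u,v]$). The grading hypothesis holds because $[t^n]\tau^{(m)}_b$ is homogeneous of degree $n$ in $\pp$ (only the $J_\lambda$ factor carries $\pp$, so its $\pp$‑degree is $|\lambda|$, consistently with the $t^2$‑ versus $t$‑prefactor as one can check on the $p_1$‑term of $A^{(m)}$); the vanishing condition $p_i^*F|_{t=0}=0$ holds since $\tau^{(m)}_b=1+O(t)$; and the structural hypothesis $A^{(m)}=\sum_i p_i\widetilde L^{(m)}_i$ with $L^{(m)}_{i-k}=p_i^*/t^k+\widetilde L^{(m)}_i$ is precisely what the first two steps supply. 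The lemma then gives $L^{(m)}_i\tau^{(m)}_b=0$ for all $i\ge 1-k=-\delta_{m,1}$, which is the claim. The main obstacle is the first step for $\tau^{(1)}_b$: carrying the character factor $\theta_{2^n}(\lambda)$ through the Pieri‑rule/Laplace--Beltrami calculation (the non‑oriented analogue of the fact that the cut‑and‑join for general maps acts on a ``doubled'' partition) and matching the resulting normalization exactly to the $p^*_{i+2}/t^2$ and $2u$ shifts appearing in $L^{(1)}_i$; the Virasoro‑relation check and the application of \cref{lemma:evolutionImpliesVirasoro} are then mechanical.
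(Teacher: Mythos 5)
Your overall strategy is exactly the paper's: produce a single $\tfrac{td}{dt}$-equation for each of $\tau^{(1)}_b$, $\tau^{(2)}_b$ and then apply Lemma~\ref{lemma:evolutionImpliesVirasoro} with $s=t$ and $k=2$, respectively $k=1$ (together with the direct check of the commutation relations $[L^{(m)}_i,L^{(m)}_j]=(i-j)L^{(m)}_{i+j}$, which the paper also treats as a routine verification). The one place where you diverge is the source of the evolution equations: the paper does not rederive them at all, it simply invokes Theorem~6.1 of \cite{ChapuyDolega2020}, which holds for an arbitrary weight function and specializes to an explicit, finite equation here because the content weights are polynomial; everything after that is, as in your last step, mechanical.

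Your proposed rederivation is fine in spirit for $\tau^{(2)}_b$ (the quadratic content weight can indeed be produced by iterated commutators of $D_b$ with $p_1$, since $[D_b,[D_b,p_1]]J_\lambda=\sum_\mu c_{\la\nearrow\mu}\,c_b(\mu\setminus\la)^2 J_\mu$), but the sketch for $\tau^{(1)}_b$ has a genuine soft spot. The one-box Pieri/Cauchy mechanism of Theorem~\ref{thm:evolution} works because the $\qq$-side specialization there is $q_i=\delta_{i,1}$, so that $\sum_{\la\vdash n}J_\la(\pp)/j_\la$ collapses to $p_{1^n}/((1+b)^n n!)$ and adding a single box reassembles the sum. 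For the map model the prefactor $\theta_{2^n}(\lambda)$ corresponds to the specialization $q_i\propto\delta_{i,2}$, so the relevant collapse is $\sum_\la J_\la(\pp)\theta_{2^n}(\la)/j_\la \propto p_2^{\,n}$, and the natural ``root'' operation adds two boxes (a $p_2$-Pieri step), whose coefficients are not simple products of contents; ``commuting the manipulation with $\theta_{2^n}$'' does not reduce to the one-box argument you describe. So either you work out the two-box analogue honestly, or you do what the paper does and quote the evolution equation from \cite{ChapuyDolega2020}; with the evolution equations in hand, the rest of your argument (the Virasoro relation check including the inhomogeneous $\delta_{i,-1}$, $\delta_{i,0}$ terms, and the application of the lemma over $R=\mathbb{Q}(b)[u]$ or $\mathbb{Q}(b)[u,v]$) is correct and matches the paper.
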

\begin{proof}
	Theorem~6.1 in \cite{ChapuyDolega2020} provides an evolution
        equation for each model, which can be written explicitly. It
        is then a direct check that
        Lemma~\ref{lemma:evolutionImpliesVirasoro} applies again,
        respectively with $s=t$ and $k=2,1$, respectively.
\end{proof}

\medskip
From the existence of matrix models, it is standard that the functions $\tau^{(1)}$ and $\tau^{(2)}$, at $b=0$, respectively $b=1$, are related to KP tau functions, respectively BKP tau functions. At $b=0$ the KP tau functions are
\begin{align*}
\sum_{\lambda} \det\biggl( \int_{\RR} dx\ e^{-\frac{x^2}{2t^2}} x^{\lambda_i+2N-i-j}\biggr)_{1\leq i,j\leq N} s_\lambda(\pp) &= \frac{Z^{(1)}_{b=0}(t;0,N)}{N!} \tau^{(1)}_{b=0}(t;\pp,N),\\
\sum_{\lambda} \det\biggl( \int_{\RR_+} dx\ e^{-\frac{x}{t}} x^{\lambda_i+N+M-i-j}\biggr)_{1\leq i,j\leq N} s_\lambda(\pp) &= \frac{Z^{(2)}_{b=0}(t;0,N,M)}{N!} \tau^{(2)}_{b=0}(t;\pp,N,M).
\end{align*}
Those equations can be obtained starting from \eqref{MatrixIntegrals} and using the Cauchy identity to expand $e^{\sum_{i=1}^N V(x_i;\pp)}$ on Schur functions. Then one commutes the sum over partitions with the integrals and use the bialternant formula for the Schur polynomials, yielding
\begin{equation*}
Z^{(1)}_{b=0}(t;\pp,N) = \sum_\lambda s_\lambda(\pp) \int_{\RR^N} \Bigl(\prod_{i=1}^N dx_i e^{-\frac{x_i^2}{2t^2}}\Bigr) \det x_i^{N-j}\ \det x_i^{\lambda_j+N-j}.
\end{equation*}
We conclude by using Andreev's formula which gives
\begin{equation*}
Z^{(1)}_{b=0}(t;\pp,N) = N! \sum_\lambda s_\lambda(\pp) \det\biggl( \int_{\RR} dx\ e^{-\frac{x^2}{2t^2}} x^{\lambda_i+2N-i-j}\biggr)_{1\leq i,j\leq N}.
\end{equation*}
The same calculation can be performed with $Z^{(2)}_{b=0}(t;\pp,N,M)$. Notice that both $Z^{(1)}_{b=0}(t;0,N)$ and $Z^{(2)}_{b=0}(t;0,N,M)$ are Selberg integrals which can be evaluated explicitly \cite{Mehta2004}.
 
At $b=1$, one has for $N$ even
\begin{multline} \label{GeneralMapsBKP}
\sum_{\lambda} \Pf\biggl( \int_{\RR^2} dx dy\ e^{-\frac{x^2+y^2}{4t^2}} x^{\lambda_i+N-i} \sgn(x-y) y^{\lambda_j+N-j}\biggr)_{1\leq i,j\leq N} s_\lambda(\pp) \\
= \frac{Z^{(1)}_{b=1}(t;0,N)}{N!} \tau^{(1)}_{b=1}(t;\mathbf{2p},N)
\end{multline}
and 
\begin{multline*}
\sum_{\lambda} \Pf\biggl(\int_{\RR_+^2} dx dy\ e^{-\frac{x+y}{2t}} x^{\lambda_i+N+\frac{\delta-1}{2}-i} \sgn(x-y) y^{\lambda_j+N+\frac{\delta-1}{2}-j}\biggr)_{1\leq i,j\leq N} s_\lambda(\pp) \\
= \frac{Z^{(2)}_{b=1}(t;0,N,N+\delta)}{N!} \tau^{(2)}_{b=1}(t;\mathbf{2p},N,N+\delta).
\end{multline*}
Similar formulas can be obtained for $N$ odd, proving that they are BKP tau functions. Again $Z^{(1)}_{b=1}(t;0,N)$ and $Z^{(2)}_{b=1}(t;0,N,N+\delta)$ are well-known Selberg integrals. Let us prove \eqref{GeneralMapsBKP} using techniques similar to \cite{VandeLeur2001}. Following the same steps as for $b=0$, one arrives at
\begin{align*}
Z^{(1)}_{b=1}(t;\mathbf{2p},N) &= \sum_\lambda s_\lambda(\pp) \int_{\RR^N} \Bigl(\prod_{i=1}^N dx_i e^{-\frac{x_i^2}{4t^2}}\Bigr) \sgn\bigl(\Delta_N(x)\bigr) \det x_i^{\lambda_j+N-j}\\
&= \sum_\lambda s_\lambda(\pp) \int_{\RR^N} \Bigl(\prod_{i=1}^N dx_i e^{-\frac{x_i^2}{4t^2}}\Bigr) \Pf \bigl(\sgn(x_i-x_j)\bigr)\ \det x_i^{\lambda_j+N-j}
\end{align*}
One concludes by using de Bruijn's formula, which extends Andreev's to Pfaffians.

Instead of using matrix integrals, the above equations can be derived
using the techniques of \cref{sec:schurExpansion} and
\cref{sec:BKP}. There is however a subtle and nevertheless crucial
difference with the case of monotone Hurwitz numbers we consider
here. It is indeed not necessary for general and bipartite maps to
consider truncated expansions: the functions can be evaluated at any
integer value of $N$, and the relation with matrix models is in a
sense much stronger for these models than for the monotone case. In
the case $b=0$, the monotone case is known to be more complicated, and
such subtleties have generated quite a lot of interest
\cite{Novak2020}. Finally note that, although it is not necessary to
consider $N$ as a formal variable in those two models, it is still
possible to do it, and the BKP hierarchy holds over $\mathbb{Q}(N)$ in
the exact sense we made precise in this paper -- note that
$\beta^{(1)}_N = Z^{(1)}_{b=1}(t;0,N)/N!$ and $\beta^{(2)}_N =
Z^{(2)}_{b=1}(t;0,N,N+\delta)/N!$ satisfy~\eqref{eq:2factorial}
and~\eqref{eq:3factorial} with
\begin{align}
  R^{(1)}_k&=\left(\frac{N+k-1}{2}\right)_{\frac{k-1}{2}}\frac{N}{N+k-1}(\sqrt{2}t)^{k-1},\\
  S^{(1)}_k&=(N+k)_k\frac{N(N-1)}{(N+k)(N+k-1)}t^k
\end{align}
in the first
case and
\begin{align}
  R^{(2)}_k&=\left(\frac{N+k-1}{2}\right)_{\frac{k-1}{2}}\left(\frac{N+k+\delta-3}{2}\right)_{\frac{k-1}{2}}\frac{N}{N+k-1}(2t)^{k-1},\\
  S^{(2)}_k&=(N+k)_k (N+\delta+k-2)_k\frac{N(N-1)}{(N+k)(N+k-1)}t^{2k}
\end{align}
in the second
case. These formulas are straightforward from the explicit form of
the Selberg integrals $Z^{(1)}_{b=1}(t;0,N)$ and
$Z^{(2)}_{b=1}(t;0,N,N+\delta)$ computed in~\cite{Mehta2004}.

\bibliographystyle{amsalpha}%
\bibliography{biblio2015}%
\end{document}